\documentclass{siamltex}%
\usepackage{amsmath}
\usepackage{amsfonts}
\usepackage{sw20siam}
\usepackage{multicol}
\usepackage{amssymb}
\usepackage{graphicx}%
\setcounter{MaxMatrixCols}{30}
\providecommand{\U}[1]{\protect\rule{.1in}{.1in}}

\newtheorem{algorithm}[theorem]{Algorithm}

\newtheorem{example}[theorem]{Example}

\newtheorem{remark}[theorem]{Remark}

\begin{document}

\title{On the Infinite Swapping Limit for Parallel Tempering}
\author{Paul Dupuis\thanks{Division of Applied Mathematics, Brown University,
Providence, RI 02912. Research supported in part by the Department of Energy
(DE-SCOO02413), the National Science Foundation (DMS-1008331), and the Air
Force Office of Scientific Research (FA9550-07-1-0544, FA9550-09-1-0378).}
\and Yufei Liu\thanks{Division of Applied Mathematics, Brown University,
Providence, RI 02912. Research supported in part by the Department of Energy
(DE-SCOO02413) and the National Science Foundation (DMS-1008331).}
\and Nuria Plattner\thanks{Department of Chemistry, Brown University, Providence,
RI 02912. Research supported in part by the Department of Energy
(DE-SCOO02413) and the by postdoctoral support through the Swiss National
Science Foundation.}
\and J.D. Doll\thanks{Department of Chemistry, Brown University, Providence, RI
02912. Research supported in part by the Department of Energy (DE-SCOO02413
and departmental program DE-00015561).}}
\date{11 October 2005}
\maketitle

\begin{abstract}
Parallel tempering, also known as replica exchange sampling, is an important
method for simulating complex systems. In this algorithm simulations are
conducted in parallel at a series of temperatures, and the key feature of the
algorithm is a swap mechanism that exchanges configurations between the
parallel simulations at a given rate. The mechanism is designed to allow the
low temperature system of interest to escape from deep local energy minima
where it might otherwise be trapped, via those swaps with the higher
temperature components. In this paper we introduce a performance criteria for
such schemes based on large deviation theory, and argue that the rate of
convergence is a monotone increasing function of the swap rate. This motivates
the study of the limit process as the swap rate goes to infinity. We construct
a scheme which is equivalent to this limit in a distributional sense, but
which involves no swapping at all. Instead, the effect of the swapping is
captured by a collection of weights that influence both the dynamics and the
empirical measure. While theoretically optimal, this limit is not
computationally feasible when the number of temperatures is large, and so
variations that are easy to implement and nearly optimal are also developed.

\end{abstract}

\begin{keywords}
Markov processes, pure jump, large deviations, relative entropy, ergodic theory, martingale, random measure
\end{keywords}

\begin{AMS}
60J25, 60J75, 60F10, 28D20, 60A10, 60G42, 60G57
\end{AMS}

\section{Introduction}

The problem of computing integrals with respect to Gibbs measures occurs in
chemistry, physics, statistics, engineering and elsewhere. In many situations,
there are no viable alternatives to methods based on Monte Carlo. Given an
energy potential, there are standard methods to construct a Markov process
whose unique invariant distribution is the associated Gibbs measure, and an
approximation is given by the occupation or empirical measure of the process
over some finite time interval \cite{liu}. However, a weakness of these
methods is that they may be slow to converge. This happens when the dynamics
of the process do not allow all important parts of the state space to
communicate easily with each other. In large scale applications this occurs
frequently, since the potential function often has complex structures
involving multiple deep local minima.

An interesting method called \textquotedblleft parallel
tempering\textquotedblright\ has been designed to overcome some of the
difficulties associated with rare transitions \cite{eardee, gey, sugoka,
swewan}. In this technique, simulations are conducted in parallel at a series
of temperatures. This method does not require detailed knowledge of or
intricate constructions related to the energy surface and is a standard method
for simulating complex systems. To illustrate the main idea, we first discuss
the diffusion case with two temperatures. Discrete time models will be
considered later in the paper, and there are obvious analogues for discrete
state systems.

Suppose that the probability measure of interest is $\mu(dx)\propto
e^{-V(x)/\tau_{1}}dx$, where $\tau_{1}$ is the temperature and $V:\mathbb{R}%
^{d}\rightarrow\mathbb{R}$ is the potential function. The normalization
constant of this distribution is typically unknown. Under suitable conditions
on $V$, $\mu$ is the unique invariant distribution of the solution to the
stochastic differential equation
\[
dX=-\nabla V(X)dt+\sqrt{2\tau_{1}}dW,
\]
where $W$ is a $d$-dimensional standard Wiener process. A straightforward
Monte Carlo approximation to $\mu$ is the empirical measure over a large time
interval of length $T$, namely,
\[
\frac{1}{T}\int_{B}^{T+B}\delta_{X(t)}(dx)dt,
\]
where $\delta_{x}$ is the Dirac measure at $x$ and $B>0$ denotes a
\textquotedblleft burn-in\textquotedblright\ period. When $V$ has multiple
deep local minima and the temperature $\tau_{1}$ is small, the diffusion $X$
can be trapped within these deep local minima for a long time before moving
out to other parts of the state space. This is the main cause for the inefficiency.

Now consider a second, larger temperature $\tau_{2}$. If $W_{1}$ and $W_{2}$
are independent Wiener processes, then of course the empirical measure of the
pair
\begin{align}
dX_{1}  &  =-\nabla V(X_{1})dt+\sqrt{2\tau_{1}}dW_{1} \label{eq:two_temp_diff}%
\\
dX_{2}  &  =-\nabla V(X_{2})dt+\sqrt{2\tau_{2}}dW_{2}\nonumber
\end{align}
gives an approximation to the Gibbs measure with density%
\begin{equation}
\pi(x_{1},x_{2})\propto e^{-\frac{V(x_{1})}{\tau_{1}}}e^{-\frac{V(x_{2})}%
{\tau_{2}}}. \label{eq:two_temp_dens}%
\end{equation}
The idea of parallel tempering is to allow \textquotedblleft
swaps\textquotedblright\ between the components $X_{1}$ and $X_{2}$. In other
words, at random times the $X_{1}$ component is moved to the current location
of the $X_{2}$ component, and vice versa. Swapping is done according to a
state dependent intensity, and so the resulting process is actually a Markov
jump diffusion. The form of the jump intensity can be selected so that the
invariant distribution remains the same, and thus the empirical measure of
$X_{1}$ can still be used to approximate $\mu$. Specifically, the jump
intensity or swapping intensity is of the Metropolis form $ag(X_{1},X_{2})$,
where%
\begin{equation}
g(x_{1},x_{2})=1\wedge\frac{\pi(x_{2},x_{1})}{\pi(x_{1},x_{2})}
\label{eq:swap_rate}%
\end{equation}
and $a\in(0,\infty)$ is a constant. Note that the calculation of $g$ does not
require the knowledge of the normalization constant. A straightforward
calculation shows that $\pi$ is the stationary density for the resulting
process for all values of $a$ [see (\ref{eqn:echeverria})]. We refer to $a$ as
the \textquotedblleft swap rate,\textquotedblright\ and note that as $a$
increases, the swaps become more frequent.

The intuition behind parallel tempering is that the higher temperature
component, being driven by a Wiener process with greater volatility, will move
more easily between the different parts of the state space. This
\textquotedblleft ease-of-movement\textquotedblright\ is transferred to the
lower temperature component via the swapping mechanism so that it is less
likely to be trapped in the deep local minima of the energy potential. This,
in turn, is expected to lead to more rapid convergence of the empirical
measure to the invariant distribution of the low temperature component. There
is an obvious extension to more than two temperatures.

Although this procedure is remarkably simple and needs little detailed
information for implementation,
relatively little is known regarding theoretical properties. A number of
papers discuss the efficiency and optimal design of parallel tempering
\cite{kof, preprecio1, preprecio2}. However, most of these discussions are
based on heuristics and empirical evidence. In general, some care is required
to construct schemes that are effective. For example, it can happen that for a
given energy potential function and swapping rate, the probability for
swapping may be so low that it does not significantly improve performance.

There are two aims to the current paper. The first is to introduce a
performance criteria for Monte Carlo schemes of this general kind that differs
in some interesting ways from traditional criteria, such as the magnitude of
the sub-dominant eigenvalue of a related operator \cite{meytwe, wooschhub}.
More precisely, we use the theory of large deviations to define a
\textquotedblleft rate of convergence\textquotedblright\ for the empirical
measure. The key observation here is that this rate, and hence the performance
of parallel tempering, is monotonically increasing with respect to the swap
rate $a$. Traditional wisdom in the application of parallel tempering has been
that one should not attempt to swap too frequently. While an obvious reason is
that the computational cost for swapping attempts might become a burden, it
was also argued that frequent swapping would result in poor sampling. For a
discussion on prior approaches to the question of how to set the swapping rate
and an argument in favor of frequent swapping, see \cite{sinmenroi,sinemeroi}.

The use of this large deviation criteria and the resulting monotonicity with
respect to $a$ directly suggest the second aim, which is to study parallel
tempering in the limit as $a\rightarrow\infty$. Note that the computational
cost due just to the swapping will increase without bound, even on bounded
time intervals, when $a\rightarrow\infty$. However, we will construct an
alternative scheme, which uses different process dynamics and a weighted
empirical measure. Because this process no longer swaps particle positions, it
and the weighted empirical measure have a well-defined limit as $a\rightarrow
\infty$ which we call infinite swapping. In effect, the swapping is achieved
through the proper choice of weights and state dependent diffusion
coefficients. This is done for the case of both continuous and discrete time
processes with multiple temperatures.

An outline of the paper is as follows. In the next section the swapping model
in continuous time is introduced and the rate of convergence, as measured by a
large deviations rate function, is defined. The alternative scheme which is
equivalent to swapping but which has a well defined limit is introduced, and
its limit as $a\rightarrow\infty$ is identified. The following section
considers the analogous limit model for more than two temperatures, and
discusses certain practical difficulties associated with direct implementation
when the number of temperatures is not small. The continuous time model is
used for illustration because both the large deviation rate and the weak limit
of the appropriately redefined swapping model take a simpler form than those
of discrete time models. However, the discrete time model is what is actually
implemented in practice. To bridge the gap between continuous time diffusion
models and discrete time models, in Section 4 we discuss the idea of infinite
swapping for continuous time Markov jump processes and prove that the key
properties demonstrated for diffusion models hold here as well. We also state
a uniform (in the swapping parameter) large deviation principle. The discrete
time form actually used in numerical implementation is presented in Section 5.
Section 6 returns to the issue of implementation when the number of
temperatures is not small. In particular, we resolve the difficulty of direct
implementation of the infinite swapping models via approximation by what we
call partial infinite swapping models. Section 7 gives numerical examples, and
an appendix gives the proof of the uniform large deviation principle.

\section{Diffusion models with two temperatures}

Although the implementation of parallel tempering uses a discrete time model,
the motivation for the infinite swapping limit is best illustrated in the
setting where the state process is a continuous time diffusion process. It is
in this case that the large deviation rate function, as well as the
construction of a process that is distributionally equivalent to the infinite
swapping limit, is simplest. In order to minimize the notational overhead, we
discuss in detail the two temperature case. The extension to models with
multiple temperatures is obvious and will be stated in the next section.

\subsection{Model setup}

Let $(\bar{X}_{1}^{a},\bar{X}_{2}^{a})$ denote the Markov jump diffusion
process of parallel tempering with swap rate $a$. That is, between swaps (or
jumps), the process follows the diffusion dynamics (\ref{eq:two_temp_diff}).
Jumps occur according to the state dependent intensity function $ag(\bar
{X}_{1}^{a},\bar{X}_{2}^{a})$. At a jump time $t$, the particles swap
locations, that is, $(\bar{X}_{1}^{a}(t),\bar{X}_{2}^{a}(t))=(\bar{X}_{2}%
^{a}(t-),\bar{X}_{1}^{a}(t-))$. Hence for a smooth functions $f:\mathbb{R}%
^{d}\times\mathbb{R}^{d}\rightarrow\mathbb{R}$ the infinitesimal generator of
the process is given by%
\begin{align*}
\mathcal{L}^{a}f(x_{1},x_{2})  &  =-\left\langle \nabla_{x_{1}}f(x_{1}%
,x_{2}),\nabla V(x_{1})\right\rangle -\left\langle \nabla_{x_{2}}f(x_{1}%
,x_{2}),\nabla V(x_{2})\right\rangle \\
&  \quad\quad+\tau_{1}\text{tr}\left[  \nabla_{x_{1}x_{1}}^{2}f(x_{1}%
,x_{2})\right]  +\tau_{2}\text{tr}\left[  \nabla_{x_{2}x_{2}}^{2}f(x_{1}%
,x_{2})\right] \\
&  \quad\quad+ag(x_{1},x_{2})\left[  f(x_{2},x_{1})-f(x_{1},x_{2})\right]  ,
\end{align*}
where $\nabla_{x_{i}}f$ and $\nabla_{x_{i}x_{i}}^{2}f$ denote the gradient and
the Hessian matrix with respect to $x_{i}$, respectively, and tr denotes
trace. Throughout the paper we also assume the growth condition
\begin{equation}
\lim_{r\rightarrow\infty}\inf_{x:|x|\geq r}\left\langle \nabla
V(x),x/|x|\right\rangle =\infty. \label{eq:growth_rate}%
\end{equation}
This condition not only ensures the existence and uniqueness of the invariant
distribution, but also enforces the exponential tightness needed for the large
deviation principle for the empirical measures.

Recall the definition of $\pi$ in (\ref{eq:two_temp_dens}) and let $\mu$ be
the corresponding Gibbs probability distribution, that is,
\[
\mu(dx_{1}dx_{2})=\pi(x_{1},x_{2})dx_{1}dx_{2}\propto e^{-\frac{V(x_{1})}%
{\tau_{1}}}e^{-\frac{V(x_{2})}{\tau_{2}}}dx_{1}dx_{2}.
\]
Straightforward calculations show that for any smooth function $f$ which
vanishes at infinity%
\begin{equation}
\int_{\mathbb{R}^{d}\times\mathbb{R}^{d}}\mathcal{L}^{a}f(x_{1},x_{2}%
)\mu(dx_{1}dx_{2})=0. \label{eqn:echeverria}%
\end{equation}
Since the condition (\ref{eq:growth_rate}) implies that $V(x)\rightarrow
\infty$ as $|x|\rightarrow\infty$, by the Echeverria's Theorem \cite[Theorem
4.9.17]{ethkur}, $\mu$ is the unique invariant probability distribution of the
process $(\bar{X}_{1}^{a},\bar{X}_{2}^{a})$.

\subsection{Rate of convergence by large deviations}

It follows from the previous discussion and the ergodic theorem \cite{bre}
that, for a fixed burn-in time $B$, with probability one%
\[
\lambda_{T}^{a}\doteq\frac{1}{T}\int_{B}^{T+B}\delta_{(\bar{X}_{1}^{a}%
(t),\bar{X}_{2}^{a}(t))}dt\Rightarrow\mu
\]
as $T\rightarrow\infty$. For notational simplicity we assume without loss of
generality that $B=0$ from now on. A basic question of interest is how rapid
is this convergence, and how does it depend on the swap rate $a$? In
particular, what is the rate of convergence of the lower temperature marginal?

We note that standard measures one might use for the rate of convergence,
such as the second eigenvalue of the associated operator, are not necessarily
appropriate here. They only provide indirect information on the convergence
properties of the empirical measure, which is the quantity of interest in the
Monte Carlo approximation. Such measures properly characterize the convergence
rate of the transition probability
\[
p(\boldsymbol{x},d\boldsymbol{y};t)=P\left\{  (\bar{X}_{1}^{a}(t),\bar{X}%
_{2}^{a}(t))\in d\boldsymbol{y}|(\bar{X}_{1}^{a}(0),\bar{X}_{2}^{a}%
(0))=\boldsymbol{x}\right\}  ,~~\boldsymbol{x},\boldsymbol{y}\in\mathbb{R}%
^{d}\times\mathbb{R}^{d},
\]
as $t\rightarrow\infty$. However, they neglect the time averaging effect of
the empirical measure, an effect that is not present with the transition
probability. In fact, it is easy to construct examples such as nearly periodic
Markov chains for which the second eigenvalue suggests a slow convergence when
in fact the empirical measure converges quickly \cite{ros}.

Another commonly used criterion for algorithm performance is the notion of
asymptotic variance \cite{liu, pes, tie}. For a given functional
$f:\mathbb{R}^{d}\times\mathbb{R}^{d}\rightarrow\mathbb{R}$, one can establish
a central limit theorem which asserts that as $T\rightarrow\infty$
\[
\mbox{Var}\left[  \frac{1}{\sqrt{T}}\int_{0}^{T}f(\bar{X}_{1}^{a}(t),\bar
{X}_{2}^{a}(t))\,dt\right]  \rightarrow\sigma^{2}.
\]
The magnitude of $\sigma$ is used to measure the statistical efficiency of the
algorithm. The asymptotic variance is closely related to the spectral
properties of the underlying probability transition kernel \cite{kipvar, ros}.
However, as with the second eigenvalue the usefulness of this criterion for
evaluating performance of the empirical measure $\lambda_{T}^{a}$ is not clear.

In this paper, we use the large deviation rate function to characterize the
rate of convergence of a sequence of random probability measures. To be more
precise, let $S$ be a Polish space, that is, a complete and separable metric
space. Denote by $\mathcal{P}(S)$ the space of all probability measures on $S
$. We equip $\mathcal{P}(S)$ with the topology of weak convergence, though one
can often use the stronger $\tau$-topology \cite{dupell4}. Under the weak
topology, $\mathcal{P}(S)$ is metrizable and itself a Polish space. Note that
the empirical measure $\lambda_{T}^{a}$ is a random probability measure, that
is, a random variable taking values in the space $\mathcal{P}(S)$.

\begin{definition}
A sequence of random probability measures $\left\{  \gamma_{T}\right\}  $ is
said to satisfy a large deviation principle (LDP) with rate function
$I:\mathcal{P}(S)\rightarrow\lbrack0,\infty]$, if for all open sets
$O\subset\mathcal{P}(S)$
\[
\liminf_{T\rightarrow\infty}\frac{1}{T}\log P\left\{  \gamma_{T}\in O\right\}
\geq-\inf_{\nu\in O}I(\nu),
\]
for all closed sets $F\subset\mathcal{P}(S)$%
\[
\limsup_{T\rightarrow\infty}\frac{1}{T}\log P\left\{  \gamma_{T}\in F\right\}
\leq-\inf_{\nu\in F}I(\nu),
\]
and if $\left\{  \nu:I(\nu)\leq M\right\}  $ is compact in $\mathcal{P}(S)$
for all $M<\infty$.
\end{definition}

For our problem all rate functions encountered will vanish only at the unique
invariant distribution $\mu$, and hence give information on the rate of
convergence of $\lambda_{T}^{a}$. A larger rate function will indicate faster
convergence, though this is only an asymptotic statement valid for
sufficiently large $T$.

\subsection{Explicit form of rate function}

The large deviation theory for the empirical measure of a Markov process was
first studied in \cite{donvar1}. Besides the Feller property, which will hold
for all processes we consider, the validity of the LDP depends on two types of
conditions. One is a so-called transitivity condition, which requires that
there are times $T_{1}$ and $T_{2}$ such that for any $\boldsymbol{x}%
,\boldsymbol{y}\in\mathbb{R}^{d}\times\mathbb{R}^{d}$,%
\[
\int_{0}^{T_{1}}e^{-t}p(\boldsymbol{x},d\boldsymbol{z};t)dt\ll\int_{0}^{T_{2}%
}e^{-t}p(\boldsymbol{y},d\boldsymbol{z};t)dt,
\]
where $\ll$ indicates that the measure in $\boldsymbol{z}$ on the left is
absolutely continuous with respect to the measure on the right. For the jump
diffusion process we consider here, this condition holds automatically since
$\nabla V$ is bounded on bounded sets, $g$ is bounded, and the diffusion
coefficients are uniformly non-degenerate. The second type of condition is one
that enforces a strong form of tightness, such as (\ref{eq:growth_rate}).

Under condition (\ref{eq:growth_rate}), the LDP holds for $\left\{
\lambda_{T}^{a}:T>0\right\}  $ and the rate function, denoted by $I^{a}$,
takes a fairly explicit form because the process is in continuous time and
reversible \cite{donvar1, pin}. We will state the following result and omit
the largely straightforward calculation since its role here is motivational.
[A uniform LDP for the analogous jump Markov process will be stated in Section
4, and its proof is given in the appendix.]

Let $\nu$ be a probability measure on $\mathbb{R}^{d}\times\mathbb{R}^{d}$
with smooth density. Define $\theta(x_{1},x_{2})\doteq\lbrack d\nu/d\mu
](x_{1},x_{2})$. Then $I^{a}(\nu)$ can be expressed as%
\begin{equation}
I^{a}(\nu)=J_{0}(\nu)+aJ_{1}(\nu), \label{eq:a_dep_rate_cont_time}%
\end{equation}
where
\begin{align*}
J_{0}(\nu)  &  =\int_{\mathbb{R}^{d}\times\mathbb{R}^{d}}\frac{1}%
{8\theta(x_{1},x_{2})^{2}}\left[  \tau_{1}\left\Vert \nabla_{x_{1}}%
\theta(x_{1},x_{2})\right\Vert ^{2}+\tau_{2}\left\Vert \nabla_{x_{2}}%
\theta(x_{1},x_{2})\right\Vert ^{2}\right]  \nu(dx_{1}dx_{2})\\
J_{1}(\nu)  &  =\int_{\mathbb{R}^{d}\times\mathbb{R}^{d}}g(x_{1},x_{2}%
)\ell\left(  \sqrt{\frac{\theta(x_{2},x_{1})}{\theta(x_{1},x_{2})}}\right)
\nu(dx_{1}dx_{2}),
\end{align*}
and where $\ell\left(  z\right)  =z\log z-z+1$ for $z\geq0$ is familiar from
the large deviation theory for jump processes.

The key observation is that the rate function $I^{a}(\nu)$ is affine in the
swapping rate $a$, with $J_{0}(\nu)$ the rate function in the case of no
swapping. Furthermore, $J_{1}(\nu)\geq0$ with equality if and only if
$\theta(x_{2},x_{1})=\theta(x_{1},x_{2})$ for $\nu$-a.e.\negthinspace
\ $(x_{1},x_{2})$. This form of the rate function, and in particular its
monotonicity in $a$, motivates the study of the \emph{infinite swapping limit}
as $a\rightarrow\infty$.

\begin{remark}
\label{remark:limit-rate} \emph{The limit of the rate function }$I^{a}%
$\emph{\ satisfies}%
\[
I^{\infty}(\nu)\doteq\lim_{a\rightarrow\infty}I^{a}(\nu)=\left\{
\begin{array}
[c]{cl}%
J_{0}(\nu) & \theta(x_{1},x_{2})=\theta(x_{2},x_{1})\text{ }\nu\text{-a.s.,}\\
\infty & \text{otherwise.}%
\end{array}
\right.
\]
\emph{Hence for }$I^{\infty}(\nu)$\emph{\ to be finite it is necessary that
}$\nu$\emph{\ put exactly the same relative weight as }$\mu$ \emph{on the
points }$(x_{1},x_{2})$\emph{\ and }$(x_{2},x_{1})$\emph{. Note that if a
process could be constructed with }$I^{\infty}$ \emph{as its rate function,
then with the large deviation rate as our criteria such a process improves on
parallel tempering with finite swapping rate in exactly those situations where
parallel tempering improves on the process with no swapping at all. }
\end{remark}

\subsection{Infinite swapping limit}

From a practical perspective, it may appear that there are limitations on how
much benefit one obtains by letting $a\rightarrow\infty$. When implemented in
discrete time, the overall jump intensity corresponds to the generation of
roughly $a$ independent random variables that are uniform on $[0,1]$ for each
corresponding unit of continuous time, and based on each uniform variable a
comparison is made to decide whether or not to make the swap. Hence even for
fixed and finite $T$, the computations required to simulate a single
trajectory scale like $a$ as $a\rightarrow\infty$. Thus it is of interest if
one can gain the benefit of the higher swapping rate without all the
computational burden. \ This turns out to be possible, but requires that we
view the prelimit processes in a different way.

It is clear that the processes $(\bar{X}_{1}^{a},\bar{X}_{2}^{a})$ are not
tight as $a\rightarrow\infty$, since the number of discontinuities of size
$O(1)$ will grow without bound in any time interval of positive length. In
order to obtain a limit, we consider alternative processes defined by%
\begin{align}
d\bar{Y}_{1}^{a}  &  =-\nabla V(\bar{Y}_{1}^{a})dt+\sqrt{2\tau_{1}1_{\{\bar
{Z}^{a}=0\}}+2\tau_{2}1_{\{\bar{Z}^{a}=1\}}}\,dW_{1}\label{eq:prelimit_swap}\\
d\bar{Y}_{2}^{a}  &  =-\nabla V(\bar{Y}_{2}^{a})dt+\sqrt{2\tau_{2}1_{\{\bar
{Z}^{a}=0\}}+2\tau_{1}1_{\{\bar{Z}^{a}=1\}}}\,dW_{2}\nonumber
\end{align}
where $\bar{Z}^{a}$ is a jump process that switches from state $0$ to state
$1$ with intensity $ag(\bar{Y}_{1}^{a},\bar{Y}_{2}^{a})$ and from state $1$ to
state $0$ with intensity $ag(\bar{Y}_{2}^{a},\bar{Y}_{1}^{a})$.
Compared to conventional parallel tempering, the processes $(\bar{Y}_{1}%
^{a},\bar{Y}_{2}^{a})$ swap the diffusion coefficients at the jump times
rather than the physical locations of two particles with constant diffusion
coefficients. For this reason, we refer to the solution to
(\ref{eq:prelimit_swap}) as the \textit{temperature swapped process, }in order
to distinguish it from the \textit{particle swapped process} $(\bar{X}_{1}%
^{a},\bar{X}_{2}^{a})$. We illustrate these processes in Figure
\ref{fig:2_temp}. Note that the solid line and the dotted line represent
$\bar{X}_{1}^{a}$ and $\bar{X}_{2}^{a}$, respectively. These processes have
more and more frequent jumps of size $O(1)$ as $a\rightarrow\infty$. In
contrast, the process $(\bar{Y}_{1}^{a},\bar{Y}_{2}^{a})$ have varying
diffusion coefficient. The figure attempts to also suggest features of the
discrete time setting, with both successful and failed swap attempts.%

\begin{figure}[ptb]%
\centering
\includegraphics[
height=1.3932in,
width=4.1295in
]%
{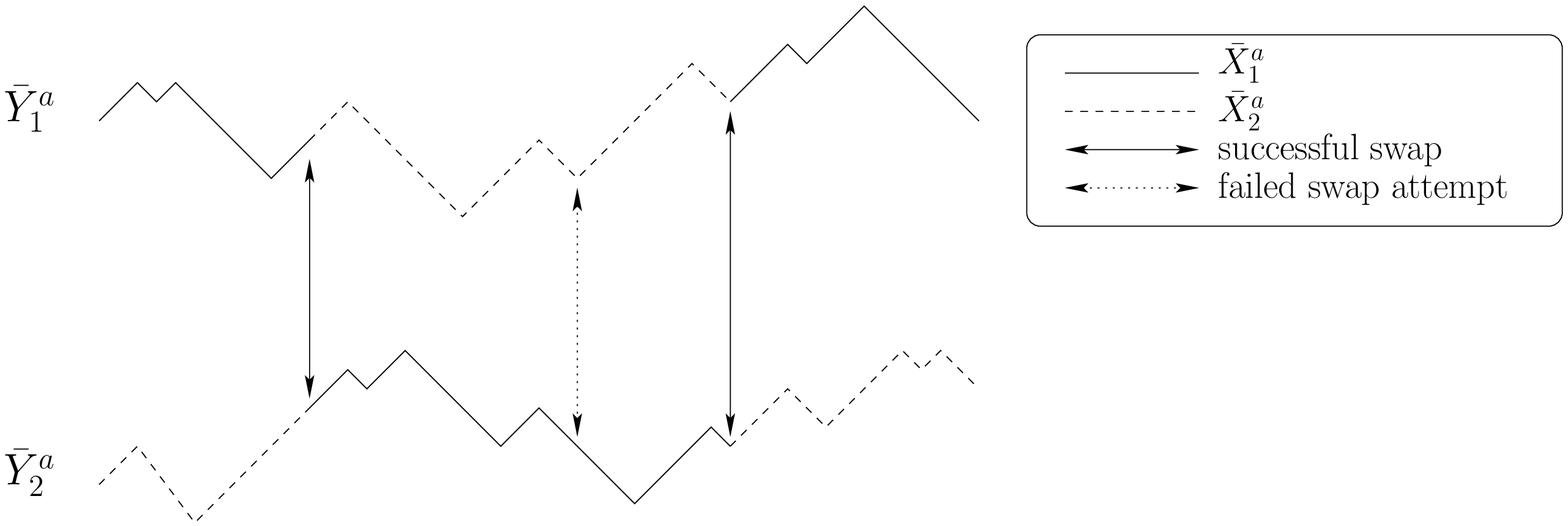}%
\caption{Temperature swapped and particle swapped processes}%
\label{fig:2_temp}%
\end{figure}

Clearly the empirical measure of $(\bar{Y}_{1}^{a},\bar{Y}_{2}^{a})$ does not
provide an approximation to $\mu$. Instead, we should shift attention between
$(\bar{Y}_{1}^{a},\bar{Y}_{2}^{a})$ and $(\bar{Y}_{2}^{a},\bar{Y}_{1}^{a})$
depending on the value of $\bar{Z}^{a}$. Indeed, the random probability
measures%
\begin{equation}
\eta_{T}^{a}=\frac{1}{T}\int_{0}^{T}\left[  1_{\left\{  \bar{Z}^{a}%
(t)=0\right\}  }\delta_{(\bar{Y}_{1}^{a}(t),\bar{Y}_{2}^{a}(t))}+1_{\left\{
\bar{Z}^{a}(t)=1\right\}  }\delta_{(\bar{Y}_{2}^{a}(t),\bar{Y}_{1}^{a}%
(t))}\right]  dt \label{eq:pre_emp}%
\end{equation}
have the same distribution as
\[
\frac{1}{T}\int_{0}^{T}\delta_{(\bar{X}_{1}^{a}(s),\bar{X}_{2}^{a}(s))}ds,
\]
and hence converge to $\mu$ at the same rate. However, these processes and
measures have well defined limits in distribution as $a\rightarrow\infty$.
More precisely, we have the following result. For the proof see \cite{kus84}.
Related (but more complex) calculations are needed to prove the uniform large
deviation result given in Theorem \ref{thm:LDP_MP}.

\begin{theorem}
\label{thm:inf_swp_lim}Assume that $\nabla V$ is locally Lipschitz continuous.
Then for each $T$ the sequence $(\bar{Y}_{1}^{a},\bar{Y}_{2}^{a},\eta_{T}%
^{a})$ converges in distribution to $(\bar{Y}_{1}^{\infty},\bar{Y}_{2}%
^{\infty},\eta_{T})$ as $a\rightarrow\infty$, where $(\bar{Y}_{1}^{\infty
},\bar{Y}_{2}^{\infty})$ is the unique strong solution to
\begin{align}
d\bar{Y}_{1}^{\infty}  &  =-\nabla V(\bar{Y}_{1}^{\infty})dt+\sqrt{2\tau
_{1}\rho(\bar{Y}_{1}^{\infty},\bar{Y}_{2}^{\infty})+2\tau_{2}\rho(\bar{Y}%
_{2}^{\infty},\bar{Y}_{1}^{\infty})}dW_{1}\label{eq:limit_swap}\\
d\bar{Y}_{2}^{\infty}  &  =-\nabla V(\bar{Y}_{2}^{\infty})dt+\sqrt{2\tau
_{2}\rho(\bar{Y}_{1}^{\infty},\bar{Y}_{2}^{\infty})+2\tau_{1}\rho(\bar{Y}%
_{2}^{\infty},\bar{Y}_{1}^{\infty})}dW_{2},\nonumber
\end{align}%
\begin{equation}
\eta_{T}^{\infty}=\frac{1}{T}\int_{0}^{T}\left[  \rho(\bar{Y}_{1}^{\infty
}(t),\bar{Y}_{2}^{\infty}(t))\delta_{(\bar{Y}_{1}^{\infty}(t),\bar{Y}%
_{2}^{\infty}(t))}+\rho(\bar{Y}_{2}^{\infty}(t),\bar{Y}_{1}^{\infty}%
(t))\delta_{(\bar{Y}_{2}^{\infty}(t),\bar{Y}_{1}^{\infty}(t))}\right]  dt,
\label{eq:weighted_emp}%
\end{equation}
and%
\[
\rho(x_{1},x_{2})\doteq\frac{\pi(x_{1},x_{2})}{\pi(x_{2},x_{1})+\pi
(x_{1},x_{2})}.
\]

\end{theorem}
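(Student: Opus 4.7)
The plan is to apply an averaging (singular-perturbation) argument for systems with fast switching, in the spirit of Kushner's theory cited in \cite{kus84}. The three things to establish are (i) tightness of the $\bar Y^a$ pair, (ii) identification of the limit via its martingale problem using averaging of the fast jump component $\bar Z^a$, and (iii) the analogous averaging for the weighted occupation measure $\eta_T^a$.

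First I would establish $C$-tightness of $\{(\bar Y_1^a,\bar Y_2^a)\}_a$ in $C([0,T];\mathbb{R}^{2d})$. The diffusion coefficients in (\ref{eq:prelimit_swap}) take only the values $2\tau_1,2\tau_2$ and hence are uniformly bounded; together with the local Lipschitz property of $\nabla V$ and the growth condition (\ref{eq:growth_rate}), standard moment bounds and a Kolmogorov--Chentsov estimate give tightness in a straightforward manner. Next I would identify any subsequential limit through its martingale problem. Set
\[
M_i^a(t)\doteq \bar Y_i^a(t)-\bar Y_i^a(0)+\int_0^t\nabla V(\bar Y_i^a(s))\,ds,
\]
which is a continuous square-integrable martingale whose quadratic variation is, for $i=1$,
\[
\langle M_1^a\rangle_t=\int_0^t\bigl[2\tau_1 1_{\{\bar Z^a(s)=0\}}+2\tau_2 1_{\{\bar Z^a(s)=1\}}\bigr]\,ds\cdot I,
\]
and analogously for $i=2$. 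The heart of the proof is to show that for every bounded continuous $\phi$,
\[
\int_0^t\phi(\bar Y_1^a,\bar Y_2^a)\bigl[1_{\{\bar Z^a(s)=0\}}-\rho(\bar Y_1^a(s),\bar Y_2^a(s))\bigr]ds\longrightarrow 0
\]
in probability. Freezing $(y_1,y_2)$, the chain $\bar Z^a$ has jump rates $ag(y_1,y_2)$ and $ag(y_2,y_1)$ and hence stationary probability of state $0$ equal to $g(y_2,y_1)/[g(y_1,y_2)+g(y_2,y_1)]$; a direct calculation using (\ref{eq:swap_rate}), treating both cases of the Metropolis minimum, shows this equals $\pi(y_1,y_2)/[\pi(y_1,y_2)+\pi(y_2,y_1)]=\rho(y_1,y_2)$. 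The Khasminskii--Kushner averaging argument then follows: partition $[0,t]$ into intervals of length $\Delta_a$ chosen with $\Delta_a\to 0$ but $a\Delta_a\to\infty$, approximate $(\bar Y_1^a,\bar Y_2^a)$ by its value at the left endpoint of each interval (justified by uniform modulus-of-continuity estimates), and invoke the uniform exponential ergodicity of the frozen two-state chain on each interval. Consequently the quadratic variation of $M_i^a$ converges to the natural $\rho$-weighted average, and any subsequential limit $(\bar Y_1^\infty,\bar Y_2^\infty)$ solves the SDE (\ref{eq:limit_swap}). Because $\nabla V$ is locally Lipschitz and $\rho$ is smooth and bounded away from $0$ and $1$ on compacts, the diffusion coefficient $\sqrt{2\tau_1\rho(y_1,y_2)+2\tau_2\rho(y_2,y_1)}$ is locally Lipschitz and strictly positive, so pathwise uniqueness holds for (\ref{eq:limit_swap}); together with tightness this gives convergence of the full sequence.

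The convergence of $\eta_T^a$ is then obtained by the same averaging lemma applied to
\[
\int f\,d\eta_T^a=\frac{1}{T}\int_0^T\bigl[1_{\{\bar Z^a(s)=0\}}f(\bar Y_1^a,\bar Y_2^a)+1_{\{\bar Z^a(s)=1\}}f(\bar Y_2^a,\bar Y_1^a)\bigr]ds
\]
for $f\in C_b(\mathbb{R}^{2d})$, which in the limit becomes $\int f\,d\eta_T^\infty$ with $\eta_T^\infty$ as in (\ref{eq:weighted_emp}); joint convergence of the triple then follows from the continuous mapping theorem combined with the convergence of the $\bar Y^a$ component. The main obstacle, as expected, is the averaging step: one has to control the coupling between the $O(1/a)$-mixing jump process and the slow diffusion uniformly in time, which is precisely the content of the perturbed-test-function method in \cite{kus84}.
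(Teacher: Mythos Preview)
Your proposal is correct and is precisely the approach the paper intends: the paper does not give its own proof of this theorem but simply writes ``For the proof see \cite{kus84},'' i.e.\ it defers to Kushner's averaging/weak-convergence machinery, which is exactly the tightness-plus-martingale-problem-plus-fast-variable-averaging program you outline. Your computation that the frozen two-state chain has stationary probability $\rho(y_1,y_2)$ for state $0$, your choice of mesoscopic scale $\Delta_a$ with $\Delta_a\to 0$ and $a\Delta_a\to\infty$, and your pathwise-uniqueness argument for \eqref{eq:limit_swap} are all correct and constitute a faithful fleshing-out of the cited reference.
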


The existence and form of the limit are due to the time scale separation
between the fast $\bar{Z}^{a}$ process and the slow $(\bar{Y}_{1}^{a},\bar
{Y}_{2}^{a})$ process. To give an intuitive explanation of the limit dynamics,
consider the prelimit processes (\ref{eq:prelimit_swap}). Suppose that on a
small time interval, the value of the slow process $(\bar{Y}_{1}^{a},\bar
{Y}_{2}^{a})$ does not vary much, say $(\bar{Y}_{1}^{a},\bar{Y}_{2}%
^{a})\approx(x_{1},x_{2})$. Given the dynamics of the binary process $\bar
{Z}^{a}$, it is easy to verify that as $a$ tends to infinity the fractions of
time that $\bar{Z}^{a}=0$ and $\bar{Z}^{a}=1$ are $\rho(x_{1},x_{2})$ and
$\rho(x_{2},x_{1})$, respectively. This leads to the limit dynamics
(\ref{eq:limit_swap}). When mapped back to the particle swapped process,
$\rho(x_{1},x_{2})$ and $\rho(x_{2},x_{1})$ account for the fraction of time
that $(\bar{X}_{1}^{a},\bar{X}_{2}^{a})=(x_{1},x_{2})$ and $(\bar{X}_{1}%
^{a},\bar{X}_{2}^{a})=(x_{2},x_{1})$, respectively, which naturally leads to
the limit weighted empirical measure (\ref{eq:weighted_emp}).

The weights $\rho_{1}$ and $\rho_{2}$ do not depend on the unknown
normalization constant, and in fact
\begin{equation}
\rho(x_{1},x_{2})=\frac{e^{-\frac{V(x_{1})}{\tau_{1}}-\frac{V(x_{2})}{\tau
_{2}}}}{e^{-\frac{V(x_{1})}{\tau_{1}}-\frac{V(x_{2})}{\tau_{2}}}%
+e^{-\frac{V(x_{2})}{\tau_{1}}-\frac{V(x_{1})}{\tau_{2}}}} \label{eqnrho}%
\end{equation}
and
\[
\rho(x_{2},x_{1})=1-\rho(x_{1},x_{2})=\frac{e^{-\frac{V(x_{2})}{\tau_{1}%
}-\frac{V(x_{1})}{\tau_{2}}}}{e^{-\frac{V(x_{1})}{\tau_{1}}-\frac{V(x_{2}%
)}{\tau_{2}}}+e^{-\frac{V(x_{2})}{\tau_{1}}-\frac{V(x_{1})}{\tau_{2}}}}.
\]
The following properties of the limit system are worth noting.\medskip

\begin{enumerate}
\item \textsc{Instantaneous equilibration of multiple locations.} Observe that
the lower temperature component of this modified \textquotedblleft empirical
measure,\textquotedblright\ i.e., the first marginal, uses contributions from
both components at all times, corrected according to the weights. The form of
the weights in (\ref{eq:weighted_emp}) guarantees that the contributions to
$\eta_{T}^{\infty}$ from locations $(\bar{Y}_{1}^{\infty},\bar{Y}_{2}^{\infty
})$ and $(\bar{Y}_{2}^{\infty},\bar{Y}_{1}^{\infty})$ are at any time
perfectly balanced according to the invariant distribution on product space.

\item \textsc{Symmetry and invariant distribution. }While the marginals of
$\eta_{T}^{\infty}$ play very different roles, the dynamics of $\bar{Y}%
_{1}^{\infty}$ and $\bar{Y}_{2}^{\infty}$ are actually symmetric. Using the
Echeverria's Theorem \cite[Theorem 4.9.17]{ethkur}, it can be shown that the
unique invariant distribution of the process $(\bar{Y}_{1}^{\infty},\bar
{Y}_{2}^{\infty})$ has the density
\[
\frac{1}{2}[\pi(x_{1},x_{2})+\pi(x_{2},x_{1})].
\]
It then follows from the ergodic theorem that $\eta_{T}^{\infty}\Rightarrow
\mu$ w.p.1 as $T\rightarrow\infty$. This is hardly surprising, since $\mu$ is
the invariant distribution for the prelimit processes $(\bar{X}_{1}^{a}%
,\bar{X}_{2}^{a})$.

\item \textsc{Escape from local minima. }Finally it is worth commenting on the
behavior of the diffusion coefficients as a function of the relative positions
of $\bar{Y}_{1}^{\infty}$ and $\bar{Y}_{2}^{\infty}$ on an energy landscape.
Recall that $\tau_{1}<\tau_{2}$. Suppose that $\bar{Y}_{1}^{\infty}(t)$ is
near the bottom of a local minimum (which for simplicity we set to be zero),
while $\bar{Y}_{2}^{\infty}(t)$ is at a higher energy level, perhaps within
the same local minimum. Then
\[
\rho(y_{1},y_{2})\approx\frac{e^{-\frac{V(y_{2})}{\tau_{2}}}}{e^{-\frac
{V(y_{2})}{\tau_{2}}}+e^{-\frac{V(y_{2})}{\tau_{1}}}}\approx1,~~~~\rho
(y_{2},y_{1})=1-\rho(y_{1},y_{2})\approx0.
\]
Thus to some degree the dynamics look like%
\begin{align*}
d\bar{Y}_{1}^{\infty}  &  =-\nabla V(\bar{Y}_{1}^{\infty})dt+\sqrt{2\tau_{1}%
}dW_{1}\\
d\bar{Y}_{2}^{\infty}  &  =-\nabla V(\bar{Y}_{2}^{\infty})dt+\sqrt{2\tau_{2}%
}dW_{2},
\end{align*}
i.e., the particle higher up on the energy landscape is given the greater
diffusion coefficient, while the one near the bottom of the well is
automatically given the lower coefficient. Hence the particle which is already
closer to escaping from the well is automatically given the greater noise
(within the confines of $(\tau_{1},\tau_{2})$). Recalling the role of the
higher temperature particle is to more assiduously explore the landscape in
parallel tempering, this is an interesting property.
\end{enumerate}

One can apply results from \cite{donvar1} to show that the empirical measure
of the infinite swapping limit $\{\eta_{T}^{\infty}:T>0\}$ satisfies a large
deviation principle with rate function $I^{\infty}$ as defined in Remark
\ref{remark:limit-rate}. However, to justify the claim that the infinite
swapping model is truly superior to the finite swapping variant (note that
$I^{\infty}\geq I^{a}$ for any finite $a$), one should establish a
\textit{uniform} large deviation principle, which would show that $I^{\infty}$
is the correct rate function for any sequence $\{a_{T}:T>0\}\subset
\lbrack0,\infty]$ such that $a_{T}\rightarrow\infty$ as $T\rightarrow\infty$.
We omit the proof here, since in Theorem \ref{thm:LDP_MP} the analogous result
will be proved in the setting of continuous time jump Markov processes.

\section{Diffusion models with multiple temperatures}

\label{Subsect:K_temp_cont_time}

In practice parallel tempering uses swaps between more than two temperatures.
A key reason is that if the gap between the temperatures is too large then the
probability of a successful swap under the [discrete time version of the]
Metropolis rule (\ref{eq:swap_rate}) is far too low for the exchange of
information to be effective. A natural generalization is to introduce, to the
degree that computational feasibility is maintained, a ladder of higher
temperatures, and then attempt pairwise swaps between particles. There are a
variety of schemes used to select which pair to attempt the swap, including
deterministic and randomized rules for selecting only adjacent temperatures or
arbitrary pair of temperatures. However, if one were to replace any of these
particle swapped processes with its equivalent temperature swapped analogue
and consider the infinite swapping limit, one would get the same system of
process dynamics and weighted empirical measures which we now describe.

Suppose that besides the lowest temperature $\tau_{1}$ (in many cases the
temperature of principal interest), we introduce the collection of higher
temperatures
\[
\tau_{1}<\tau_{2}<\cdots<\tau_{K}.
\]
Let $\boldsymbol{y}=\left(  y_{1},y_{2},\ldots,y_{K}\right)  \in
(\mathbb{R}^{d})^{K}$ be a generic point in the state space of the process and
define a product Gibbs distribution with the density%
\[
\pi\left(  \boldsymbol{y}\right)  =\pi\left(  y_{1},y_{2},\ldots,y_{K}\right)
\propto\left.  e^{-V\left(  y_{1}\right)  /\tau_{1}}e^{-V\left(  y_{2}\right)
/\tau_{2}}\cdots e^{-V\left(  y_{K}\right)  /\tau_{K}}\right.  .
\]
The limit of the temperature swapped processes with $K$ temperatures takes the
form
\begin{align*}
d\bar{Y}_{1}^{\infty}  &  =-\nabla V\left(  \bar{Y}_{1}^{\infty}\right)
dt+\sqrt{2\rho_{11}\tau_{1}+2\rho_{12}\tau_{2}+\ldots+2\rho_{1K}\tau_{K}%
}dW_{1}\\
d\bar{Y}_{2}^{\infty}  &  =-\nabla V\left(  \bar{Y}_{2}^{\infty}\right)
dt+\sqrt{2\rho_{21}\tau_{1}+2\rho_{22}\tau_{2}+\ldots+2\rho_{2K}\tau_{K}%
}dW_{2}\\
&  \vdots\\
d\bar{Y}_{K}^{\infty}  &  =-\nabla V\left(  \bar{Y}_{K}^{\infty}\right)
dt+\sqrt{2\rho_{K1}\tau_{1}+2\rho_{K2}\tau_{2}+\ldots+2\rho_{KK}\tau_{K}%
}dW_{K}.
\end{align*}
To define these weights $\rho_{ij}$ it is convenient to introduce some new notation.

Let $S_{K}$ be the collection of all bijective mappings from $\left\{
1,2,\ldots,K\right\}  $ to itself. $S_{K}$ has $K!$ elements, each of which
corresponds to a unique permutation of the set $\{1,2,\ldots,K\}$, and $S_{K}
$ forms a group with the group action defined by composition. Let $\sigma
^{-1}$ denote the inverse of $\sigma$. Furthermore, for each $\sigma\in S_{K}$
and every $\boldsymbol{y}=\left(  y_{1},y_{2},\ldots,y_{K}\right)
\in(\mathbb{R}^{d})^{K}$, define $\boldsymbol{y}_{\sigma}\doteq\left(
y_{\sigma(1)},y_{\sigma(2)},\ldots,y_{\sigma(K)}\right)  $.

At the level of the prelimit particle swapped process, we interpret the
permutation $\sigma$ to correspond to event that the particles at location
$\boldsymbol{y}=(y_{1},y_{2},\ldots,y_{K})$ are swapped to the new location
$\boldsymbol{y}_{\sigma}=\left(  y_{\sigma(1)},y_{\sigma(2)},\ldots
,y_{\sigma(K)}\right)  $. Under the temperature swapped process, this
corresponds to the event that particles initially assigned temperatures in the
order $\tau_{1},\tau_{2},\ldots,\tau_{K}$ have now been assigned the
temperatures $\tau_{\sigma^{-1}(1)},\tau_{\sigma^{-1}(2)},\ldots,\tau
_{\sigma^{-1}(K)}$.

The identification of the infinite swapping limit of the temperature swapped
processes is very similar to that of the two temperature model in the previous
section. By exploiting the time-scale separation, one can assume that in a
small time interval the only motion is due to temperature swapping and the
motion due to diffusion is negligible. Hence the fraction of time that the
permutation $\sigma$ is in effect should again be proportional to the relative
weight assigned by the invariant distribution to $\boldsymbol{y}_{\sigma}$,
that is,
\[
\pi(\boldsymbol{y}_{\sigma})=\pi\left(  y_{\sigma(1)},y_{\sigma(2)}%
,\ldots,y_{\sigma(K)}\right)  .
\]
Thus if
\[
w(\boldsymbol{y})\doteq\frac{\pi(\boldsymbol{y})}{\sum_{\theta\in S_{K}}%
\pi(\boldsymbol{y}_{\theta})},
\]
then the fraction of time that the permutation $\sigma$ is in effect is
$w(\boldsymbol{y}_{\sigma})$. Note that for any $\boldsymbol{y}$,
\[
\sum_{\sigma\in S_{K}}w(\boldsymbol{y}_{\sigma})=1.
\]
Going back to the definition of the weights $\rho_{ij}(\boldsymbol{y})$,
$i,j=1,\ldots,K$, it is clear that they represent the limit proportion of time
that the $i$-th particle is assigned the temperature $j$ and hence will
satisfy
\[
\rho_{ij}\left(  \boldsymbol{y}\right)  =\sum_{\sigma:~\sigma\left(  j\right)
=i}w\left(  \boldsymbol{y}_{\sigma}\right)  .
\]
Likewise the replacement for the empirical measure, accounting as it does for
mapping the temperature swapped process back to the particle swapped process,
is given by%
\begin{equation}
\eta_{T}^{\infty}=\frac{1}{T}\int_{0}^{T}\sum_{\sigma\in S_{K}}%
w(\boldsymbol{\bar{Y}}_{\sigma}^{\infty}(t))\delta_{\boldsymbol{\bar{Y}%
}_{\sigma}^{\infty}(t)}dt, \label{eq:inf_swap_emp_meas}%
\end{equation}
where $\boldsymbol{\bar{Y}}_{\sigma}^{\infty}(t)\doteq\lbrack\boldsymbol{\bar
{Y}}^{\infty}(t)]_{\sigma}=(\bar{Y}_{\sigma(1)}^{\infty}(t),\bar{Y}%
_{\sigma(2)}^{\infty}(t),\ldots,\bar{Y}_{\sigma(K)}^{\infty}(t))$.

The instantaneous equilibration property still holds for the infinite swapping
system with multiple temperatures. That is, at any time $t\in\lbrack0,T]$ and
given a current position $\boldsymbol{\bar{Y}}^{\infty}(t)=\boldsymbol{y}$,
the weighted empirical measure $\eta_{T}^{\infty}$ has contributions from all
locations of the form $\boldsymbol{y}_{\sigma},\sigma\in S_{K}$, balanced
exactly according to their relative contributions from the invariant density
$\pi\left(  \boldsymbol{y}_{\sigma}\right)  $. The dynamics of
$\boldsymbol{\bar{Y}}^{\infty}$ are again symmetric, and the density of the
invariant distribution at point $\boldsymbol{y}$ is
\[
\frac{1}{K!}\sum_{\sigma\in S_{K}}\pi(\boldsymbol{y}_{\sigma}).
\]

\begin{remark}
\label{remark:Partial} \emph{The infinite swapping process described above
allows the most effective communication between all temperatures, and is the
\textquotedblleft best\textquotedblright\ in the sense that it leads to the
largest large deviation rate function and hence the fastest rate of
convergence. However, computation of the coefficients becomes very demanding
for even moderate values of }$K$\emph{, since one needs to evaluate }%
$K!$\emph{\ terms from all possible permutations. In Section
\ref{sec:discrete} we discuss a more tractable and easily implementable family
of schemes which are essentially approximations to the infinite swapping model
presented in the current section and have very similar performance. We call
the current model the full infinite swapping model since it uses the whole
permutation group }$S_{K}$\emph{, as opposed to the partial infinite swapping
model in Section \ref{sec:discrete} where only subgroups of }$S_{K}%
$\emph{\ are used. }
\end{remark}

\section{Infinite swapping for jump Markov processes}

\label{sec:MP}

The continuous time diffusion model is a convenient vehicle to convey the main
idea of infinite swapping. In practice, however, algorithms are implemented in
discrete time. In this section we discuss continuous time pure jump Markov
processes and the associated infinite swapping limit. The purpose of this
intermediate step is to serve as a bridge between the diffusion and discrete
time Markov chain models. These two types of processes have some subtle
differences regarding the infinite swapping limit which is best illustrated
through the continuous time jump Markov model.

In this section we discuss the two-temperature model, and omit the completely
analogous multiple-temperature counterpart. We will not refer to temperatures
$\tau_{1}$ and $\tau_{2}$ to distinguish dynamics. Instead, let $\alpha
_{1}(x,dy)$ and $\alpha_{2}(x,dy)$ be two probability transition kernels on
$\mathbb{R}^{d}$ given $\mathbb{R}^{d}$. One can think of $\alpha_{i}$ as the
dynamics under temperature $\tau_{i}$ for $i=1,2$. We assume that for each
$i=1,2$ the stationary distribution $\mu_{i}$ associated with the transition
kernel $\alpha_{i}$ admits the density $\pi_{i}$ in order to be consistent
with the diffusion models, and define
\[
\mu=\mu_{1}\times\mu_{2},~~~~\pi(x_{1},x_{2})=\pi_{1}(x_{1})\pi_{2}(x_{2}).
\]
We assume that the kernels are Feller and have a density that is uniformly
bounded with respect to Lebesgue measure. These conditions would always be
satisfied in practice. Finally, we assume that the detailed balance or
reversibility condition holds, that is,
\begin{equation}
\alpha_{i}(x,dz)\pi_{i}(x)dx=\alpha_{i}(z,dx)\pi_{i}(z)dz.
\label{eqn:detailed_balance}%
\end{equation}

\subsection{Model setup}

\label{section:noswap_MP}

In the absence of swapping [i.e., swapping rate $a=0$], the dynamics of the
system are as follows. Let $\boldsymbol{X}^{0}=\{\boldsymbol{X}^{0}%
(t)=(X_{1}^{0}(t),X_{2}^{0}(t)):t\geq0\}$ denote a continuous time process
taking values in $\mathbb{R}^{d}\times\mathbb{R}^{d}$. The probability
transition kernel associated with the embedded Markov chain, denoted by
$\boldsymbol{\bar{X}}^{0}=\{(\bar{X}_{1}^{0}(j),\bar{X}_{2}^{0}%
(j)):j=0,1,\ldots\}$, is
\[
P\{\boldsymbol{\bar{X}}^{0}(j+1)\in(dy_{1},dy_{2})|\boldsymbol{\bar{X}}%
^{0}(j)=(x_{1},x_{2})\}=\alpha_{1}(x_{1},dy_{1})\alpha_{2}(x_{2},dy_{2}).
\]
Without loss of generality, we assume that the jump times occur according to a
Poisson process with rate one. In other words, let $\{\tau_{i}\}$ be a
sequence of independent and identically distributed (iid) exponential random
variables with rate one that are independent of $\boldsymbol{\bar{X}}^{0}$.
Then
\[
\boldsymbol{X}^{0}(t)=\boldsymbol{\bar{X}}^{0}(j),~~\mbox{for~}\sum_{i=1}%
^{j}\tau_{i}\leq t<\sum_{i=1}^{j+1}\tau_{i}.
\]
The infinitesimal generator of $\boldsymbol{X}^{0}$ is such that for a given
smooth function $f$,
\[
\mathcal{L}^{0}f(x_{1},x_{2})=\int_{\mathbb{R}^{d}\times\mathbb{R}^{d}%
}[f(y_{1},y_{2})-f(x_{1},x_{2})]\alpha_{1}(x_{1},dy_{1})\alpha_{2}%
(x_{2},dy_{2}).
\]

Owing to the detailed balance condition (\ref{eqn:detailed_balance}), the
operator $\mathcal{L}^{0}$ is self-adjoint. Using arguments similar to but
simpler than those used to prove the uniform LDP in Theorem \ref{thm:LDP_MP},
the large deviations rate function $I^{0}$ associated with the occupation
measure
\[
\eta_{T}^{0}=\frac{1}{T}\int_{0}^{T}\delta_{\boldsymbol{X}^{0}(t)}\,dt
\]
can be explicitly identified: for any probability measure $\nu$ on
$\mathbb{R}^{d}\times\mathbb{R}^{d}$ with $\nu\ll\mu$ and $\theta=d\nu/d\mu$,%
\[
I^{0}(\nu)=1-\int_{(\mathbb{R}^{d}\times\mathbb{R}^{d})^{2}}\sqrt{\theta
(x_{1},x_{2})\theta(y_{1},y_{2})}\pi(x_{1},x_{2})\alpha_{1}(x_{1}%
,dy_{1})\alpha_{2}(x_{2},dy_{2})\,dx_{1}dx_{2},
\]
and $I^{0}$ is extended to all of $\mathcal{P}(\mathbb{R}^{d}\times
\mathbb{R}^{d})$ by lower semicontinuous regularization.

\subsection{Finite swapping model}

\label{sec:finite_swap}

Denote by $\boldsymbol{X}^{a}=\{(X_{1}^{a}(t),X_{2}^{a}(t)):t\geq0\}$ the
state process of the finite swapping model with swapping rate $a$, and let
$\boldsymbol{\bar{X}}^{a}=\{(\bar{X}_{1}^{a}(j),\bar{X}_{2}^{a}%
(j)):j=0,1,\ldots\}$ be the embedded Markov chain. The probability transition
kernel for $\boldsymbol{\bar{X}}^{a}$ is
\begin{align*}
\lefteqn{P\{\boldsymbol{\bar X}^{a}(j+1)\in(dy_{1},dy_{2})|\boldsymbol{\bar
X}^{a}(j)=(x_{1},x_{2})\}~=~\frac{1}{a+1}\alpha_{1}(x_{1},dy_{1})\alpha
_{2}(x_{2},dy_{2})}\\
&  ~~~~+\frac{a}{a+1}\left[  g(x_{1},x_{2})\delta_{(x_{2},x_{1})}%
(dy_{1},dy_{2})+(1-g(x_{1},x_{2}))\delta_{(x_{1},x_{2})}(dy_{1},dy_{2}%
)\right]  ,
\end{align*}
where $g$ is defined as in (\ref{eq:swap_rate}). Furthermore, let $\{\tau
_{i}^{a}\}$ be a sequence of iid exponential random variables with rate
$(a+1)$ and define
\[
\boldsymbol{X}^{a}(t)=\boldsymbol{\bar{X}}^{a}(j),~~\mbox{for~}\sum_{i=1}%
^{j}\tau_{i}^{a}\leq t<\sum_{i=1}^{j+1}\tau_{i}^{a}.
\]
In other words, the jumps occur according to a Poisson process with rate $a+1
$. Note that there are two types of jumps. At any jump time, with probability
$1/(a+1)$ it will be a jump according to the underlying probability transition
kernels $\alpha_{1}$ and $\alpha_{2}$. With probability $a/(a+1)$ it will be
an attempted swap which will succeed with the probability determined by $g$.
As $a$ grows, the swap attempts become more and more frequent. However, the
time between two consecutive jumps of the first type will have the same
distribution as
\[
S^{a}=\sum_{i=1}^{N^{a}}\tau_{i}^{a}
\]
where $N^{a}$ is a geometric random variable with parameter $1/(a+1)$. It is
easy to argue that for any $a$ the distribution of $S^{a}$ is exponential with
rate one. This observation will be useful when we derive the infinite swapping limit.

The infinitesimal generator $\mathcal{L}^{a}$ of $\boldsymbol{\bar{X}}^{a}$ is
such that for any smooth function $f$ on $\mathbb{R}^{d}\times\mathbb{R}^{d}$
\begin{align*}
\mathcal{L}^{a}f(x_{1},x_{2})  &  =\int_{\mathbb{R}^{d}\times\mathbb{R}^{d}%
}[f(y_{1},y_{2})-f(x_{1},x_{2})]\alpha_{1}(x_{1},dy_{1})\alpha_{2}%
(x_{2},dy_{2})\\
&  ~~~~~~~~~~~~~~~~~+ag(x_{1},x_{2})[f(x_{2},x_{1})-f(x_{1},x_{2})].
\end{align*}
It is not difficult to check that the stationary distribution of
$\boldsymbol{\bar{X}}^{a}$ remains $\mu$ and that $\mathcal{L}^{a}$ is
self-adjoint. As before, the large deviation rate function $I^{a}$ for the
occupation measure
\begin{equation}
\eta_{T}^{a}=\frac{1}{T}\int_{0}^{T}\delta_{\boldsymbol{X}^{a}(t)}\,dt
\label{eqn:etaTa}%
\end{equation}
can be explicitly identified. Indeed, for any probability measure $\nu$ on
$\mathbb{R}^{d}\times\mathbb{R}^{d}$ with $\nu\ll\mu$ and $\theta=d\nu/d\mu$
\[
I^{a}(\nu)=I^{0}(\nu)+aJ(\nu),
\]
where
\[
J(\nu)=\int_{\mathbb{R}^{d}\times\mathbb{R}^{d}}g(x_{1},x_{2})\ell\left(
\sqrt{\frac{\theta(x_{2},x_{1})}{\theta(x_{1},x_{2})}}\right)  \nu
(dx_{1},dx_{2}).
\]
Note that as before, $I^{a}$ is monotonically increasing with respect to $a$.
Since $J(\nu)\geq0$ with equality if and only if $\theta(x_{1},x_{2}%
)=\theta(x_{2},x_{1})$ $\nu$-a.e., we have
\begin{equation}
I^{\infty}(\nu)\doteq\lim_{a\rightarrow\infty}I^{a}(\nu)=\left\{
\begin{array}
[c]{cl}%
I^{0}(\nu) & \mbox{if }\theta(x_{1},x_{2})=\theta(x_{2},x_{1})\text{ }%
\nu\text{-a.s.,}\\
\infty & \text{otherwise.}%
\end{array}
\right.  \label{eqn:Iinfty}%
\end{equation}

\subsection{Infinite swapping limit}

\label{sec:inf_swap_MP}

The infinite swapping limit for $\boldsymbol{X}^{a}$ as $a\rightarrow\infty$
can be similarly obtained by considering the corresponding temperature swapped
processes. Since the times between jumps determined by $\alpha_{1}$ and
$\alpha_{2}$ are always exponential with rate one, the infinite swapping limit
$\boldsymbol{Y}^{\infty}=(Y_{1}^{\infty},Y_{2}^{\infty})$ is a pure jump
Markov process where jumps occur according to a Poisson process with rate one.
In other words,
\[
\boldsymbol{Y}^{\infty}(t)=\boldsymbol{\bar{Y}}^{\infty}(j),~~\mbox{for~}\sum
_{i=1}^{j}\tau_{i}\leq t<\sum_{i=1}^{j+1}\tau_{i},
\]
where $\boldsymbol{\bar{Y}}^{\infty}$ is the embedded Markov chain and
$\{\tau_{i}\}$ a sequence of iid exponential random variables with rate one.
Furthermore, the probability transition kernel for $\boldsymbol{\bar{Y}%
}^{\infty}$ is
\begin{align}
\lefteqn{P\{\boldsymbol{\bar{Y}}^{\infty}(j+1)\in(dz_{1},dz_{2})|\boldsymbol
{\bar{Y}}^{\infty}(j)=(y_{1},y_{2})\}}\label{eqn:MP_transition}\\
&  =\rho(y_{1},y_{2})\alpha_{1}(y_{1},dz_{1})\alpha_{2}(y_{2},dz_{2}%
)+\rho(y_{2},y_{1})\alpha_{2}(y_{1},dz_{1})\alpha_{1}(y_{2},dz_{2}),\nonumber
\end{align}
where the weight function $\rho$ is defined as in Theorem
\ref{thm:inf_swp_lim}. It is not difficult to argue that the stationary
distribution for $\boldsymbol{Y}^{\infty}$ is
\[
\bar{\mu}(dy_{1},dy_{2})=\frac{1}{2}[\pi(y_{1},y_{2})+\pi(y_{2},y_{1}%
)]\,dy_{1}dy_{2},
\]
and the weighted occupation measure
\begin{equation}
\eta_{T}^{\infty}=\frac{1}{T}\int_{0}^{T}\left[  \rho({Y}_{1}^{\infty}%
(t),{Y}_{2}^{\infty}(t))\delta_{({Y}_{1}^{\infty}(t),{Y}_{2}^{\infty}%
(t))}+\rho({Y}_{2}^{\infty}(t),{Y}_{1}^{\infty}(t))\delta_{({Y}_{2}^{\infty
}(t),{Y}_{1}^{\infty}(t))}\right]  dt \label{eqn:etaTinf}%
\end{equation}
converges to $\mu(dx_{1},dx_{2})=\pi(x_{1},x_{2})dx_{1}dx_{2}$ as
$T\rightarrow\infty$. It is obvious that the dynamics of the infinite swapping
limit are symmetric and instantaneously equilibrate the contribution from
$(Y_{1},Y_{2})$ and $(Y_{2},Y_{1})$ according to the invariant measure, owing
to the weight function $\rho$.

We have the following uniform large deviation principle result, which
justifies the superiority of infinite swapping model. Its proof is deferred to
the appendix. It should be noted that rate identification is not covered by
the existing literature, even in the case of a fixed swapping rate, due to the
pure jump nature of the process.

\begin{theorem}
\label{thm:LDP_MP} The occupation measure $\{\eta_{T}^{\infty}:T>0\}$
satisfies a large deviation principle with rate function $I^{\infty}$. More
generally, define the finite swapping model as in Subsection
\ref{sec:finite_swap}. Consider any sequence $\{a_{T}:T>0\}\subset
\lbrack0,\infty]$ such that $a_{T}\rightarrow\infty$ as $T\rightarrow\infty$,
and interpret $a_{T}<\infty$ to mean that $\eta_{T}^{a_{T}}$ is defined by
(\ref{eqn:etaTa}) with $a=a_{T}$, and $a_{T}=\infty$ to mean that $\eta
_{T}^{a_{T}}$ is defined by (\ref{eqn:etaTinf}). Then $\left\{  \eta
_{T}^{a_{T}}:T>0\right\}  $ satisfies an LDP with the rate function
$I^{\infty}$ defined in equation (\ref{eqn:Iinfty}).
\end{theorem}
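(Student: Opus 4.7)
The plan is to establish the equivalent Laplace principle by a weak convergence / stochastic control approach. Concretely, for bounded continuous $F:\mathcal{P}(\mathbb{R}^d\times\mathbb{R}^d)\to\mathbb{R}$, I aim to show
$$\lim_{T\to\infty}-\frac{1}{T}\log E\exp\bigl(-T F(\eta_T^{a_T})\bigr)=\inf_{\nu}\bigl[F(\nu)+I^\infty(\nu)\bigr],$$
which, together with exponential tightness (supplied by the uniform density bound on $\alpha_1,\alpha_2$ and the standard tightness machinery for occupation measures of jump processes), yields the LDP with rate $I^\infty$.

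The first step is a variational representation of $-\frac{1}{T}\log E\exp(-TF(\eta_T^{a_T}))$ as the infimum, over adapted controls, of $E[F(\bar\eta_T^{a_T})+\tfrac{1}{T}R_T]$, where $\bar\eta_T^{a_T}$ is the occupation measure of a controlled jump process obtained by tilting its transition kernel and $R_T$ is the accumulated relative entropy cost. When $a_T<\infty$, the cost decomposes naturally into a base part (tilting $\alpha_1,\alpha_2$ at rate $1$) and a swap part (tilting the Metropolis swap decision at rate $a_T$); when $a_T=\infty$, the control tilts the mixed kernel in (\ref{eqn:MP_transition}). The key auxiliary object is the joint empirical measure on pre-jump state, post-jump state, and swap-indicator; tightness and a stationarity/consistency relation force the time-reversal marginal to equal the forward marginal in any weak limit.

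The main difficulty is the lower bound, uniformly as $a_T\to\infty$. Along any near-optimal sequence with $\bar\eta_T^{a_T}\Rightarrow\nu$, I would apply a Jensen-type estimate to the per-jump relative entropy of the swap decision, together with lower semicontinuity, to bound the swap contribution to $\tfrac{1}{T}R_T$ from below by $a_T J_T$, with $J_T\to J(\nu)$ as in Subsection \ref{sec:finite_swap}. If the symmetry $\theta(x_1,x_2)=\theta(x_2,x_1)$ fails $\nu$-a.s., then $J(\nu)>0$ and $a_TJ_T\to\infty$ forces infinite cost; if it holds, then $J(\nu)=0$ and the swap contribution is negligible, leaving only the base contribution. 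Identifying that base contribution as $I^0(\nu)$ uses the reversibility (\ref{eqn:detailed_balance}) together with the Donsker--Varadhan-type calculation that expresses the minimal tilt cost for a reversible jump kernel in the explicit ``$1-\int\sqrt{\theta(\cdot)\theta(\cdot)}\cdots$'' form. The net lower bound is $I^\infty(\nu)$.

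For the complementary upper bound, given $\nu$ with $I^\infty(\nu)<\infty$ (so $\theta$ is symmetric and $I^0(\nu)<\infty$), I would construct near-optimal controls by the Donsker--Varadhan ``ground state'' tilt of the base kernels $\alpha_i$ associated with $\sqrt{\theta}$, while leaving the swap intensities untilted. Because $\theta$ is symmetric, the untilted swap decisions incur zero relative entropy cost uniformly in $a_T$, and the controlled base cost converges to $I^0(\nu)=I^\infty(\nu)$. A short regularization argument handles general $\nu$ in the domain of $I^\infty$ by smooth approximation. Combining the upper and lower bounds with exponential tightness produces the claimed uniform LDP, covering simultaneously the finite-$a_T$ and $a_T=\infty$ cases, and in particular the LDP for $\{\eta_T^\infty:T>0\}$ as the special case $a_T\equiv\infty$.
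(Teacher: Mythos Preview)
Your overall strategy---prove the Laplace principle via a stochastic control representation and weak convergence---matches the paper's. But the execution differs substantially, and there are gaps that the paper's proof addresses and yours does not.

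First, you never confront the continuous-time structure. In the paper the number of ``base'' jumps by time $T$ is random, and the representation involves controlled exponential holding times and a controlled geometric number of swap attempts between base jumps. This forces a three-case decomposition according to whether $R^a/T$ (the realized jump count per unit time) is abnormally small, abnormally large, or of order one; the first two are handled by Cram\'er-type estimates, and only in the third does the empirical-measure analysis proceed. A separate lemma (decomposition of a mean-one exponential into a geometric number of mean-$1/a$ exponentials, with a corresponding relative-entropy inequality) is needed to collapse the fast time scale. None of this appears in your plan, and without it the passage from the representation to a clean ``base cost plus swap cost'' lower bound is not justified.

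Second, your lower bound via ``swap cost $\geq a_T J_T$ with $J_T\to J(\nu)$'' is not how the paper argues, and as stated it is unproved: the swap cost in the representation is a sum of per-attempt relative entropies for a two-state chain, and tying its minimum over controls to $J(\nu)$ is itself a Donsker--Varadhan calculation you have not carried out. The paper avoids this entirely by working with the temperature-swapped process $(\boldsymbol{Y}^a,Z^a)$: under bounded total cost, the empirical distribution of the fast indicator $Z^a$ is shown (via a martingale argument and the divergence of the geometric mean) to converge to $(\rho(\boldsymbol{y}),1-\rho(\boldsymbol{y}))$, so that any weak limit $\bar\eta_\infty$ of the \emph{weighted} empirical measure automatically satisfies the symmetry $\theta(\boldsymbol{x})=\theta(\boldsymbol{x}^R)$. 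The remaining base cost is then bounded below by the rate $K(\bar\psi_\infty)$ of the \emph{ordinary} empirical measure of $\boldsymbol{Y}^\infty$, and a further nontrivial inequality $K(\bar\psi_\infty)\geq I^0(\bar\eta_\infty)$ (proved via $\sqrt{ab}+\sqrt{cd}\leq\sqrt{(a+c)(b+d)}$) closes the argument. Your proposal collapses this last step into ``identifying the base contribution as $I^0(\nu)$ uses reversibility,'' but the weighted-versus-unweighted distinction is precisely where the work lies. Similarly, for the upper bound the paper tilts the mixed kernel $\varphi$ \emph{and} the geometric holding-time distribution (to reshape $[\eta]_1$ into $\varsigma$), not the individual $\alpha_i$; your ``tilt each $\alpha_i$ by $\sqrt{\theta}$ and leave swaps untilted'' needs an argument that the resulting controlled empirical measure actually converges to the target $\nu$ and that the cost converges to $I^0(\nu)$, which is not immediate.
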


\section{Discrete time process models}

\label{sec:discrete}

\subsection{Conventional parallel tempering algorithms}

\label{subsect:two_temp_model}

In the discrete time, multi-temperature algorithms that are actually
implemented, a swap is attempted after a deterministic or random number of
time steps, with a success probability of the form (\ref{eq:swap_rate}). The
two temperatures corresponding to particles for which a swap is attempted can
be chosen according to a deterministic or random schedule, and as noted
previously are usually adjacent since otherwise the success probability
(\ref{eq:swap_rate}) will be too small to allow efficient exchange of information.

As before it suffices to describe the algorithm in the setting of two
temperatures. As in Section \ref{sec:MP}, let $\alpha_{i}(x,dy)$ denote the
probability transition kernel for temperature $\tau_{i}$ whose stationary
distribution has a density $\pi_{i}$ for $i=1,2$. For now let $N=1/a$ be a
fixed positive integer that determines the frequency of swap attempts. Let
$\boldsymbol{\bar{X}}=\{(\bar{X}_{1}(j),\bar{X}_{2}(j)):j=0,1,\ldots\}$ denote
the state process. Then the evolution of the dynamics is as follows. For any
integer $k\geq1$ and $(k-1)(N+1)\leq j\leq k(N+1)-2$,
\[
P\{\boldsymbol{\bar{X}}(j+1)\in(dy_{1},dy_{2})|\boldsymbol{\bar{X}}%
(j)=(x_{1},x_{2})\}=\alpha_{1}(x_{1},dy_{1})\alpha_{2}(x_{2},dy_{2})
\]
and for $j=k(N+1)-1$,
\begin{align*}
P\{\boldsymbol{\bar{X}}(j+1)=(x_{2},x_{1})|\boldsymbol{\bar{X}}(j)=(x_{1}%
,x_{2})\}  &  =g(x_{1},x_{2}),\\
P\{\boldsymbol{\bar{X}}(j+1)=(x_{1},x_{2})|\boldsymbol{\bar{X}}(j)=(x_{1}%
,x_{2})\}  &  =1-g(x_{1},x_{2}).
\end{align*}
Thus a swap is attempted after every $N$ ordinary time steps based on the
underlying transition kernels $\alpha_{1}$ and $\alpha_{2}$. The case $N=1/a$
with $a$ an integer greater than one corresponds to the case where multiple
swaps are attempted between two ordinary time steps. The unique invariant
distribution of $\bar{X}$ is $\mu(dx_{1}dx_{2})=\pi(x_{1},x_{2})\,dx_{1}%
dx_{2}$, regardless of the value of $N$, and the occupation measure
\[
\frac{1}{J}\sum_{j=0}^{J-1}\delta_{\boldsymbol{X}(j)}
\]
converges to $\mu$ as $J\rightarrow\infty$ almost surely.

\begin{remark}
\label{remark:swap_rate_dis}\emph{\ Note that }$N$\emph{\ could be random. For
example, if }$N$\emph{\ is chosen to be a geometric random variable with mean
}$\lambda$\emph{, then }$\bar{X}$\emph{\ is exactly the embedded Markov chain
of the pure jump Markov process }$\bar{X}^{a}$\emph{\ with }$a=1/\lambda
$\emph{\ in Subsection \ref{sec:finite_swap}. }
\end{remark}

\subsection{Infinite swapping model}

\label{subsect:discrete_time_inf_swap}

As with the continuous time case, to produce a well-defined limit one must
consider the temperature swapped process and then consider the limit as
swapping frequency tends to infinity. It turns out that the limit is exactly
the embedded Markov chain for the pure jump Markov process in Subsection
\ref{sec:inf_swap_MP}. That is, the infinite swapping limit in discrete time
is a Markov chain $\boldsymbol{\bar{Y}}^{\infty}=\left\{  (\bar{Y}_{1}%
^{\infty}(j),\bar{Y}_{2}^{\infty}(j)):j=0,1,\ldots\right\}  $ with the
transition kernel
\begin{equation}
\rho(y_{1},y_{2})\alpha_{1}(y_{1},dz_{1})\alpha_{2}(y_{2},dz_{2})+\rho
(y_{2},y_{1})\alpha_{2}(y_{1},dz_{1})\alpha_{1}(y_{2},dz_{2}).
\label{eq:two_temp_inf_swap_dyn}%
\end{equation}
The corresponding weighted empirical measure is
\begin{equation}
\eta_{J}^{\infty}\doteq\frac{1}{J}\sum_{j=0}^{J-1}\left[  \rho(\bar{Y}%
_{1}^{\infty}(j),\bar{Y}_{2}^{\infty}(j))\delta_{(\bar{Y}_{1}^{\infty}%
(j),\bar{Y}_{2}^{\infty}(j))}+\rho(\bar{Y}_{2}^{\infty}(j),\bar{Y}_{1}%
^{\infty}(j))\delta_{(\bar{Y}_{2}^{\infty}(j),\bar{Y}_{1}^{\infty}%
(j))}\right]  . \label{eq:two_temp_inf_swap_meas}%
\end{equation}

The generalization to multiple temperatures is also straightforward. Suppose
that there are $K$ temperatures. Denote the infinite swapping limit process by
$\boldsymbol{\bar{Y}}=\{\boldsymbol{\bar{Y}}(j):j=0,1,\ldots\}$, which is a
Markov chain taking values in the space $(\mathbb{R}^{d})^{K}$. Given that the
current state of the chain is $\boldsymbol{\bar{Y}}^{\infty}(j)=\boldsymbol{y}%
=(y_{1},\ldots,y_{K})$, define as before the weights
\[
w\left(  \boldsymbol{y}\right)  \doteq\frac{\pi\left(  \boldsymbol{y}\right)
}{\sum_{\theta\in S_{K}}\pi\left(  \boldsymbol{y_{\theta}}\right)  }.
\]
Then the transition kernel of $\boldsymbol{\bar{Y}}^{\infty}$ is
\[
P(\boldsymbol{\bar{Y}}^{\infty}(j+1)\in d\boldsymbol{z}|\boldsymbol{\bar{Y}%
}^{\infty}(j)=\boldsymbol{y})=\sum_{\sigma\in S_{K}}w(\boldsymbol{y}_{\sigma
})\alpha_{1}(y_{\sigma(1)},dz_{\sigma(1)})\cdots\alpha_{K}(y_{\sigma
(K)},dz_{\sigma(K)}).
\]
The discrete time numerical approximation to the invariant distribution is%
\[
\eta_{J}^{\infty}\doteq\frac{1}{J}\sum_{j=0}^{J-1}\sum_{\sigma\in S_{K}%
}w(\boldsymbol{\bar{Y}}_{\sigma}^{\infty}(j))\delta_{\boldsymbol{\bar{Y}%
}_{\sigma}^{\infty}(j)}.
\]

\begin{remark}
\label{remark:unif_LDP_dis}\emph{\ It is not difficult to derive large
deviation principles for the discrete time finite swapping or infinite
swapping models. However, it remains an open question whether the rate
function is monotonic with respect to the swap rate (frequency). However, the
discrete time large deviation rate function can be obtained from that of the
continuous time pure jump Markov process models through the contraction
principle, and the two coincide in the limit as the transition kernels
}$\alpha_{i}$\emph{\ correspond to an infinitesimal time step for the
diffusion process (\ref{eq:two_temp_diff}). Hence the discrete time rate
function will be at least approximately monotone, and in this sense the
infinite swapping limit should (at least approximately) dominate all finite
swapping algorithms. This is supported by the data presented in Section
\ref{sec:numerical} and the much more extensive empirical study presented in
\cite{pladoldupliuwangub}. }
\end{remark}

\section{Partial infinite swapping}

As noted in Section \ref{Subsect:K_temp_cont_time}, the number of weights and
their calculation can become unwieldy for infinite swapping even when the
number of temperatures is moderate. In this section we construct algorithms
that maintain most of the benefit of the infinite swapping algorithm but at a
much lower computational cost. In the first subsection we describe the
infinite swapping limit models when only a subgroup of the permutations of the
particles (respectively, temperatures) are allowed by the prelimit particle
(respectively, temperature) swapped process. The computational complexity of
these limit models will be controlled by limiting the number of permutations
that communicate with each other through the swapping mechanism. The infinite
swapping models in this subsection will be called \textit{partial infinite
swapping models,} as opposed to the full infinite swapping models in the
previous section. The second subsection shows how such partial infinite
swapping schemes can be interwoven to approximate the full infinite swapping model.

\subsection{Partial infinite swapping models}

We consider subsets $A$ of $S_{K}$ with the property that $A$ is an algebraic
subgroup of $S_{K}$. That is,

\begin{enumerate}
\item the identity belongs to $A$;

\item if $\sigma_{1}, \sigma_{2}\in A$ then $\sigma_{1}\circ\sigma_{2}\in A$,
where $\circ$ denotes composition;

\item if $\sigma\in A$ then $\sigma^{-1}\in A$.
\end{enumerate}

Although one can write down a partial infinite swapping model that corresponds
to instantaneous equilibration for an arbitrary subset $A$, it is only when
$A$ is a subgroup that the corresponding partial infinite swapping process has
an interpretation as the limit of parallel tempering type processes. When
alternating between partial infinite swapping processes, a \textquotedblleft
handoff\textquotedblright\ rule will be needed, and it is only for those which
correspond to subgroups that such a handoff rule is well defined. This point
is discussed in some detail in the next section.

The definition of the partial infinite swapping process based on $A$ is
completely analogous to that of the full infinite swapping process. The state
process $\{\boldsymbol{\bar{Y}}(j):j=0,1,\ldots\}$ is a Markov chain with the
transition kernel
\begin{equation}
\alpha^{A}(\boldsymbol{y},d\boldsymbol{z})\doteq\sum_{\sigma\in A}\tilde
{w}^{A}\left(  \boldsymbol{y}_{\sigma}\right)  \alpha_{1}(y_{\sigma
(1)},dz_{\sigma(1)})\cdots\alpha_{K}(y_{\sigma(K)},dz_{\sigma(K)})
\label{eqn:part_inf_swap}%
\end{equation}
and the weighted empirical measure is
\[
\tilde{\eta}_{J}\doteq\frac{1}{J}\sum_{j=0}^{J-1}\sum_{\sigma\in A}\tilde
{w}^{A}(\boldsymbol{\bar{Y}}_{\sigma}(j))\delta_{\boldsymbol{\bar{Y}}_{\sigma
}(j)},
\]
where the weight function $\tilde{w}^{A}$ is defined by
\begin{equation}
\tilde{w}^{A}\left(  \boldsymbol{y}\right)  \doteq\frac{\pi\left(
\boldsymbol{y}\right)  }{\sum_{\theta\in A}\pi\left(  \boldsymbol{y}_{\theta
}\right)  }, \label{eqn:part_weight}%
\end{equation}
and satisfies for any $\boldsymbol{y}$
\[
\sum_{\sigma\in A}\tilde{w}^{A}(\boldsymbol{y}_{\sigma})=1.
\]
We omit the dependence on both $a=\infty$ and $A$ from the notation. Note that
in contrast with the full swapping system, it is only those permutations of
$\boldsymbol{y}$ corresponding to $\sigma\in A$ that are balanced according to
the invariant distribution in their contributions to $\tilde{\eta}_{J}$.

To illustrate the construction we present a few examples. With a standard
abuse of notation denote the permutation $\sigma$ such that $\sigma(i)=a_{i}$
by the form $(a_{1},a_{2},\ldots,a_{K})$. In particular, $(1,2,\ldots,K)$ is
the identity of the group $S_{K}$.

\begin{example}
Let $K=4$ and $A=\{(1,2,3,4),(2,1,3,4)\}$. This corresponds to only allowing
swaps between temperatures $\tau_{1}$ and $\tau_{2}$ at the prelimit. Define
\[
\tilde{w}(\boldsymbol{y})=\frac{\pi(y_{1},y_{2},y_{3},y_{4})}{\pi(y_{1}%
,y_{2},y_{3},y_{4})+\pi(y_{2},y_{1},y_{3},y_{4})}
\]
The probability transition kernel of the corresponding partial infinite
swapping process is given by%
\begin{align*}
\lefteqn{\tilde{w}\left(  y_{1},y_{2},y_{3},y_{4}\right)  \alpha_{1}%
(y_{1},dz_{1})\alpha_{2}(y_{2},dz_{2})\alpha_{3}(y_{3},dz_{3})\alpha_{4}%
(y_{4},dz_{4})}\\
&  ~~~+\tilde{w}\left(  y_{2},y_{1},y_{3},y_{4}\right)  \alpha_{1}%
(y_{2},dz_{2})\alpha_{2}(y_{1},dz_{1})\alpha_{3}(y_{3},dz_{3})\alpha_{4}%
(y_{4},dz_{4})
\end{align*}
and the contribution to the weighted empirical measure is
\[
\tilde{w}\left(  \bar{Y}_{1},\bar{Y}_{2},\bar{Y}_{3},\bar{Y}_{4}\right)
\delta_{\left(  \bar{Y}_{1},\bar{Y}_{2},\bar{Y}_{3},\bar{Y}_{4}\right)
}+\tilde{w}\left(  \bar{Y}_{2},\bar{Y}_{1},\bar{Y}_{3},\bar{Y}_{4}\right)
\delta_{\left(  \bar{Y}_{2},\bar{Y}_{1},\bar{Y}_{3},\bar{Y}_{4}\right)  }.
\]
Note that with $\pi_{ij}$ denoting the marginal invariant distribution on the
$i$-th and $j$-th components, the weight function can be written as
\[
\tilde{w}(\boldsymbol{y})=\frac{\pi_{12}(y_{1},y_{2})}{\pi_{12}(y_{1}%
,y_{2})+\pi_{12}(y_{2},y_{1})},
\]
which is consistent with the weights of the two-temperature model in Theorem
\ref{thm:inf_swp_lim}.
\end{example}

\begin{example}
Again take $K=4$, but this time use the subgroup generated by $(2,1,3,4)$ and
$(1,2,4,3)$, i.e., $A=\{(1,2,3,4),(2,1,3,4),(1,2,4,3),(2,1,4,3)\}$. Then the
dynamics are given by
\begin{align*}
\lefteqn{ \tilde{w}\left(  y_{1},y_{2},y_{3},y_{4}\right)  \alpha_{1}%
(y_{1},dz_{1})\alpha_{2}(y_{2},dz_{2}) \alpha_{3}(y_{3},dz_{3})\alpha
_{4}(y_{4},dz_{4})}\\
&  ~~~ + \tilde{w}\left(  y_{2},y_{1},y_{3},y_{4}\right)  \alpha_{1}%
(y_{2},dz_{2})\alpha_{2}(y_{1},dz_{1}) \alpha_{3}(y_{3},dz_{3})\alpha
_{4}(y_{4},dz_{4})\\
&  ~~~~~~~~~+ \tilde{w}\left(  y_{1},y_{2},y_{4},y_{3}\right)  \alpha
_{1}(y_{1},dz_{1})\alpha_{2}(y_{2},dz_{2}) \alpha_{3}(y_{4},dz_{4})\alpha
_{4}(y_{3},dz_{3})\\
&  ~~~~~~~~~~~~~~~~+ \tilde{w}\left(  y_{2},y_{1},y_{4},y_{3}\right)
\alpha_{1}(y_{2},dz_{2})\alpha_{2}(y_{1},dz_{1}) \alpha_{3}(y_{4}%
,dz_{4})\alpha_{4}(y_{3},dz_{3})
\end{align*}
where the weight function $\tilde{w}$ is defined by
\[
\widetilde{w}\left(  \boldsymbol{y}\right)  =\frac{\pi\left(  y_{1}%
,y_{2},y_{3},y_{4}\right)  }{\pi\left(  y_{1},y_{2}, y_{3},y_{4}\right)
+\pi\left(  y_{2},y_{1}, y_{3},y_{4}\right)  +\pi\left(  y_{1},y_{2},
y_{4},y_{3}\right)  +\pi\left(  y_{2},y_{1}, y_{4},y_{3}\right)  }.
\]
The contribution to the weighted empirical measure is
\begin{align*}
\lefteqn{\tilde{w}\left(  \bar{Y}_{1},\bar{Y}_{2},\bar{Y}_{3},\bar{Y}%
_{4}\right)  \delta_{\left(  \bar{Y}_{1},\bar{Y}_{2},\bar{Y}_{3},\bar{Y}%
_{4}\right)  } +\tilde{w}\left(  \bar{Y}_{2},\bar{Y}_{1},\bar{Y}_{3},\bar
{Y}_{4}\right)  \delta_{\left(  \bar{Y}_{2},\bar{Y}_{1},\bar{Y}_{3},\bar
{Y}_{4}\right)  }}\\
&  ~~~ +\tilde{w}\left(  \bar{Y}_{1},\bar{Y}_{2},\bar{Y}_{4},\bar{Y}%
_{3}\right)  \delta_{\left(  \bar{Y}_{1},\bar{Y}_{2},\bar{Y}_{4},\bar{Y}%
_{3}\right)  }+\tilde{w}\left(  \bar{Y}_{2},\bar{Y}_{1},\bar{Y}_{4},\bar
{Y}_{3}\right)  \delta_{\left(  \bar{Y}_{2},\bar{Y}_{1},\bar{Y}_{4},\bar
{Y}_{3}\right)  }.
\end{align*}

\end{example}

\begin{example}
We let $K=3$ and take $A$ to be the subgroup of $S_{K}$ generated by the
rotation $(2,3,1)$, i.e., $A=\{(1,2,3),(2,3,1),(3,1,2)\}$. Then the dynamics
are given by%
\begin{align*}
\lefteqn{ \tilde{w}\left(  y_{1},y_{2},y_{3}\right)  \alpha_{1}(y_{1}%
,dz_{1})\alpha_{2}(y_{2},dz_{2}) \alpha_{3}(y_{3},dz_{3})}\\
&  ~~~ + \tilde{w}\left(  y_{2},y_{3},y_{1}\right)  \alpha_{1}(y_{2}%
,dz_{2})\alpha_{2}(y_{3},dz_{3}) \alpha_{3}(y_{1},dz_{1})\\
&  ~~~~~~~~~+ \tilde{w}\left(  y_{3},y_{1},y_{2}\right)  \alpha_{1}%
(y_{3},dz_{3})\alpha_{2}(y_{1},dz_{1}) \alpha_{3}(y_{2},dz_{2})
\end{align*}
where
\[
\tilde{w}\left(  \boldsymbol{y}\right)  =\frac{\pi(y_{1},y_{2},y_{3})}%
{\pi(y_{1},y_{2},y_{3})+\pi\left(  y_{2},y_{3},y_{1}\right)  +\pi\left(
y_{3},y_{1},y_{2}\right)  }
\]
and the contribution to the weighted empirical measure is%
\[
\tilde{w}_{1}\left(  \bar{Y}_{1},\bar{Y}_{2},\bar{Y}_{3}\right)
\delta_{\left(  \bar{Y}_{1},\bar{Y}_{2},\bar{Y}_{3}\right)  }+\tilde{w}_{2}(
\bar{Y}_{2},\bar{Y}_{3},\bar{Y}_{1})\delta_{\left(  \bar{Y}_{2},\bar{Y}%
_{3},\bar{Y}_{1}\right)  }+\tilde{w}_{3}\left(  \bar{Y}_{3},\bar{Y}_{1}%
,\bar{Y}_{3}\right)  \delta_{\left(  \bar{Y}_{3},\bar{Y}_{1},\bar{Y}%
_{3}\right)  }.
\]

\end{example}

The first two examples would correspond to the infinite swapping limit of a
standard parallel tempering process, where swaps between only $1$ and 2 are
allowed in the first example, and swaps between 1 and 2 and swaps between 3
and 4 are allowed in the second. Note that the computational complexity does
not increase significantly between the first and second example. The third
example corresponds to a very different sort of prelimit process, in which
\textquotedblleft rotations\textquotedblright\ of the coordinates
$(y_{1},y_{2},y_{3})\rightarrow\left(  y_{2},y_{3},y_{1}\right)
\rightarrow\left(  y_{3},y_{1},y_{2}\right)  \rightarrow(y_{1},y_{2},y_{3})$
are allowed. One can devise a Metropolis type rule that allows such
\textquotedblleft swaps\textquotedblright\ and yields the indicated infinite
swapping system.

\subsection{Approximating full infinite swapping by partial swapping}

In this section we consider the issue of alternating between such partial
infinite swapping systems to approximate the full infinite swapping limit. Let
$A$ and $B$ be subgroups of $S_{K}$. $A$ and $B$ are said to \textit{generate}
$S_{K}$ if the smallest subgroup that contains $A$ and $B$ is $S_{K}$ itself.
Note that the total number of permutations in $A\cup B$ can be significantly
smaller than $K!$, the size of $S_{K}$. In fact, it is possible to construct
subgroups $A$ and $B$ that generate $S_{K}$ and that the total number of
permutations in $A\cup B$ is of order $K$. There is an obvious extension to
more than two subgroups.

\begin{example}
\label{Ex:sub_1}Let $K=4$ and let $A$ be generated by $\left\{
(2,1,3,4),(1,3,2,4)\right\}  $ and $B$ be generated by $\left\{
(1,3,2,4),(1,2,4,3)\right\}  $, respectively. Thus $A$ is the collection of 6
permutations that fix the last component and allow all rearrangements of the
first three, while $B$ fixes the first component and allows all rearrangements
of the last three. Then $A$ and $B$ generate $S_{K}$.
\end{example}

\begin{example}
\label{Ex:sub_2}Let $K=4$ and let $A$ and $B$ be subgroups generated by
$\left\{  (2,1,3,4)\right\}  $ and $\left\{  (2,3,4,1)\right\}  $,
respectively. In other words, $A=\{(1,2,3,4), (2,1,3,4)\}$ corresponds to only
allowing permutations between the first two components, while $B =
\{(1,2,3,4),(2,3,4,1),(3,4,1,2),(4,1,2,3)\}$ corresponds to cycling of the
four temperatures. Then $A$ and $B$ generate $S_{K}$.
\end{example}

To keep the computational cost controlled, one can approximate the full
infinite swapping model by alternating between partial infinite swapping
processes whose associated subgroups generate the whole group. However, one
must be careful in how the \textquotedblleft handoff\textquotedblright\ is
made when switching between different partial swapping models. It turns out
that one cannot simply switch between different partial infinite swapping
dynamics (i.e., transition kernels). Recall that in order to get a consistent
approximation to the desired target invariant distribution we do not use the
empirical measure generated by $\boldsymbol{\bar{Y}}$, but rather a carefully
constructed weighted empirical measure that works with several permutations of
$\boldsymbol{\bar{Y}}$. Simply switching the dynamics and weights will in fact
produce an algorithm that may not converge to the target distribution.

To see how one should design a handoff rule, note that if one considers a
collection of transition kernels each having the same invariant distribution
and alternates between them in a way that does not depend on the outcomes
prior to a switch, then the resulting empirical measure will in fact converge
to the common invariant distribution. This fact is used (at least implicitly)
in the parallel tempering algorithm itself, where one alternates the pair of
particles being considered for swapping according to deterministic or random
rules so long as the random rules do not rely on previously observed outcomes.

Now we use the fact that each partial infinite swapping model is a limit of
either a parallel tempering algorithm where only some pairs of particles are
considered for swapping, or some more general form of parallel tempering which
would allow groups of particles to simultaneously swap (according to an
appropriate Metropolis-type acceptance rule). An example in the earlier
category would be $A$ in Example \ref{Ex:sub_1}, which arises if only the
pairs corresponding to temperatures $\tau_{1},\tau_{2}$ and $\tau_{2},\tau
_{3}$ are allowed to swap, whereas an example in the latter category would be
$B$ in Example \ref{Ex:sub_2}, which corresponds to allowing the particle at
temperature $\tau_{i}$ to move to the location of the particle at temperature
$\tau_{i-1}$ (with $\tau_{0}=\tau_{4}$), and the reverse. Furthermore, each of
these partial infinite swapping models arises as a limit of transition kernels
of the corresponding temperature swapped processes which preserve the same
common invariant distribution. In taking the limit as the swap rate tends to
infinity, the correspondence between particle locations for a particle swapped
process and the \textquotedblleft instantaneously
equilibrated\textquotedblright\ temperature swapped process $\bar{Y}$ is lost.
However, one can construct a consistent algorithm by \textit{reconstructing
this correspondence}. In fact one should choose the particle location
according to the probabilities (under the invariant distribution) associated
with the various permutations in the subgroup. See Subsection
\ref{section:dis_approx} for more detailed discussion on the intuition behind
this approximation algorithm.

We next present an algorithm for alternating between two partial infinite
swapping dynamics. The restriction to two is for notational convenience only.
Suppose that the dynamics are indexed by the corresponding subgroups $A $ and
$B$, and that $n_{A}$ steps of subgroup $A$ are to be alternated with $n_{B}$
steps of subgroup $B$. For simplicity we do not describe a \textquotedblleft
burn-in\textquotedblright\ period. As in (\ref{eqn:part_inf_swap}) and
(\ref{eqn:part_weight}) we let $\alpha^{A}(\boldsymbol{y},d\boldsymbol{z})$
and $\tilde{w}^{A}(\boldsymbol{y})$ denote the transition kernel for $A$ and
the weights allocated to the permutation $\sigma\in A$, respectively, and
similarly for $B$.

\begin{algorithm}
\label{algo:disc} (Approximation to full infinite swapping)

\begin{enumerate}
\item Initialization: $\boldsymbol{\bar{X}}^{A}(0)=\boldsymbol{\bar{Y}%
}(0;0)\in(\mathbb{R}^{d})^{K},\ell=1$.

\item Loop $\ell$:

\begin{enumerate}
\item Initialization for $A$ dynamics: set $\boldsymbol{\bar{Y}}%
(\ell;0)=\boldsymbol{\bar{X}}^{A}(\ell-1)$.

\item Subgroup $A$ dynamics: update $\boldsymbol{\bar{Y}}(\ell;k),k=1,\ldots
,n_{A}$ according to the transition kernel $\alpha^{A}$, and add
\[
\sum_{\sigma\in A}\tilde{w}^{A}(\boldsymbol{\bar{Y}}_{\sigma}(\ell
;k))\delta_{\boldsymbol{\bar{Y}_{\sigma}}(\ell;k)}
\]
to the un-normalized empirical measure.

\item Reconstructing particle locations at the end of $A$ dynamics: Let
$\boldsymbol{\bar{X}}^{B}(\ell)$ be a random sample from the set
$\{\boldsymbol{\bar{Y}_{\sigma}}(\ell;n_{A}): \sigma\in A\}$ according to the
weights $\{\tilde{w}^{A}(\boldsymbol{\bar{Y}}_{\sigma}(\ell;n_{A})): \sigma\in
A\}$.

\item Initialization for $B$ dynamics: set $\boldsymbol{\bar{Y}}(\ell
;n_{A})=\boldsymbol{\bar{X}}^{B}(\ell)$.

\item Subgroup $B$ dynamics: update $\boldsymbol{\bar{Y}}(\ell;k),k=n_{A}%
+1,\ldots,n_{A}+n_{B}$ according to the transition kernel $\alpha^{B}$, and
add
\[
\sum_{\sigma\in B}\tilde{w}^{B}(\boldsymbol{\bar{Y}}_{\sigma}(\ell
;k))\delta_{\boldsymbol{\bar{Y}_{\sigma}}(\ell;k)}
\]
to the un-normalized empirical measure.

\item Reconstructing particle locations at the end of $B$ dynamics: Let
$\boldsymbol{\bar{X}}^{A}(\ell)$ be a random sample from the set
$\{\boldsymbol{\bar{Y}_{\sigma}}(\ell;n_{A}+n_{B}): \sigma\in B\}$ according
to the weights $\{\tilde{w}^{B}(\boldsymbol{\bar{Y}}_{\sigma}(\ell;n_{A}%
+n_{B})): \sigma\in B\}$.

\item Set $\ell= \ell+1$ and loop back to (a).
\end{enumerate}

\item Normalize the empirical measure.
\end{enumerate}
\end{algorithm}

\subsection{Discussions on the approximation}

\label{section:dis_approx}

In this section we further discuss the intuition underlying the handoff rule
between different partial infinite swapping dynamics and the approximation
algorithm of the previous section. We temporarily assume that the model is in
continuous time since the intuition is most transparent in this case.%

\begin{figure}[ptb]%
\centering
\includegraphics[
height=2.0946in,
width=3.6288in
]%
{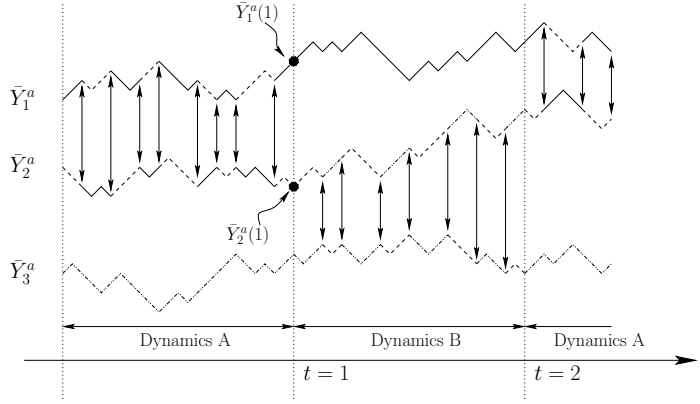}%
\caption{Approximation via partial infinite swapping}%
\label{fig:partial}%
\end{figure}

\noindent

For simplicity let us assume that there are three temperatures and two groups
$A=\{(1,2,3),(2,1,3)\}$ and $B=\{(1,2,3),(1,3,2)\}$. That is, under group $A$
dynamics only pairwise swaps between the temperatures $\tau_{1}$ and $\tau
_{2}$ are allowed, while under the group $B$ dynamics, only the swaps between
$\tau_{2}$ and $\tau_{3}$ are allowed. See Figure \ref{fig:partial}.

Consider the following prelimit swapping model. Let the swap rate be $a$. The
dynamics corresponding to group $A$ and group $B$ will be alternated on time
intervals of length $h$. Hence on the interval $[2kh,(2k+1)h)$ the particle
swapped process $(\bar{X}_{1}^{a},\bar{X}_{2}^{a},\bar{X}_{3}^{a})$ only
involves swaps between temperatures $\tau_{1}$ and $\tau_{2}$. One can easily
construct the corresponding temperature swapped process $(\bar{Y}_{1}^{a}%
,\bar{Y}_{2}^{a},\bar{Y}_{3}^{a})$ as before. Note that $\bar{X}_{3}^{a}%
=\bar{Y}_{3}^{a}$ on this time interval. Similarly, on the interval
$[(2k+1)h,(2k+2)h)$, only swaps between $\tau_{2}$ and $\tau_{3}$ are allowed
and on this interval $\bar{X}_{1}^{a}=\bar{Y}_{1}^{a}$. Note that there is no
ambiguity for the prelimit processes at the switch times $t=h,2h,\ldots,$
since the locations of the particles $(\bar{X}_{1}^{a},\bar{X}_{2}^{a},\bar
{X}_{3}^{a})$ are known.

Now consider the limit as $a\rightarrow\infty$ with $h$ being fixed. Without
loss of generality, we will only discuss how to deal with the switch of the
dynamics at time $t=h$. On the time interval $[0,h)$ we have the partial
infinite swapping limit process $\boldsymbol{\bar{Y}}^{A}=(\bar{Y}_{1},\bar
{Y}_{2},\bar{Y}_{3})$ that corresponds to the group $A$. Similarly it is clear
that on the time interval $[h,2h)$ we should have the partial infinite
swapping process $\boldsymbol{\bar{Y}}^{B}$ corresponding to the group $B$.
The problem is, however, by taking the limit, we lose the information on the
locations of the particles $(\bar{X}_{1}^{a},\bar{X}_{2}^{a},\bar{X}_{3}^{a})
$. Unless we can somehow recover this information at the switch time $t=h$ to
assign $\boldsymbol{\bar{Y}}^{B}(h)$, we cannot determine the dynamics of
$\boldsymbol{\bar{Y}}^{B}$ on $[h,2h)$. The key is to recall that the infinite
swapping limit instantaneously equilibrates multiple locations according to
the invariant distribution. In other words, given $\boldsymbol{\bar{Y}}%
^{A}(h-)=\boldsymbol{y}=(y_{1},y_{2},y_{3})$, the locations of the particles
are distributed according to
\[
\sum_{\sigma\in A}\tilde{w}^{A}(\boldsymbol{y}_{\sigma})\delta_{\boldsymbol{y}%
_{\sigma}}=\frac{\pi(y_{1},y_{2},y_{3})\delta_{(y_{1},y_{2},y_{3})}+\pi
(y_{2},y_{1},y_{3})\delta_{(y_{2},y_{1},y_{3})}}{\pi(y_{1},y_{2},y_{3}%
)+\pi(y_{2},y_{1},y_{3})}.
\]
Therefore, in order to identify the locations of the particles at time $h$, we
will take a random sample from this distribution once $\boldsymbol{\bar{Y}%
}^{A}(h-)$ is known. This explains the handoff rule used at the switch times
of partial infinite swapping processes.

Now we let $h\rightarrow0$. Since $A$ and $B$ generate the whole permutation
group $S_{K}$, it is easy to check that at each time instant, the locations of
$\{\boldsymbol{y}_{\sigma}:\sigma\in S_{K}\}$ are equilibrated according to
their invariant distribution, and therefore in the limit we will attain the
full infinite swapping model. This can be made rigorous by exploiting the time
scale separation between the slow diffusion processes $(\boldsymbol{\bar{Y}%
}^{A},\boldsymbol{\bar{Y}}^{B})$ and the fast switching process. We omit the
proof because the discussion is largely motivational.

Coming back to the discrete time partial infinite swapping model, it is clear
that Algorithm \ref{algo:disc} is nothing but a straightforward adaption of
the preceding discussion to discrete time. The only difference is that one
cannot establish an analogous result regarding approximation to the full
infinite swapping model as in continuous time. The subtlety here is that in
continuous time, as $h\rightarrow0$, one can basically ignore any effect from
the diffusion on any small time interval and assume that the process is only
making jumps between different permutations of a fixed triple $(y_{1}%
,y_{2},y_{3})$. This time scale separation is no longer valid in discrete
time. In this setting, the performance of a scheme based on interweaving
partial infinite swapping schemes lies between parallel tempering and full
infinite swapping, and computational results suggest that it is closer to the
latter than the former.

The issue of which interwoven partial schemes will perform best is an open
question. In practice we have used schemes of the following form. Suppose that
a set of say 45 temperatures is given. We then partition 45 into blocks of
sizes $3,6,\ldots,6$, with the first block containing the lowest three
temperatures, the second block the next six, and so on. Dynamic $A$ then is
given by allowing all permutations within each block. Note that the complexity
of the coefficients is then no worse than $6!$. In Dynamic $B$ we use the
partition $6,6,\ldots,6,3$. The form of the partial scheme is heuristically
motivated by allowing the largest possible overlap between the different
blocks when switching between dynamics, subject to the constraint that blocks
be of size no greater than $6$.

\section{Numerical examples}

\label{sec:numerical}In this section we present data comparing parallel
tempering at various swap rates and both full and partial infinite swapping.
We present what we call \textquotedblleft relaxation
studies.\textquotedblright\ The quantity of interest is the average potential
energy of the lowest temperature component under the invariant distribution.
In these studies, the system is run a long time to reach equilibrium, after
which it is repeatedly pushed out of equilibrium and we measure the time
needed to \textquotedblleft relax\textquotedblright\ back to equilibrium. Each
cycle consists of temporarily raising the temperatures of some of the lowest
temperature components for a number of steps sufficient to push the average
potential energy away from the \textquotedblleft true\textquotedblright\ value
(as measured by either sample or time averages). The temperatures are then
returned to their \textquotedblleft true\textquotedblright\ values for a fixed
number of steps, and the process is then repeated 2,000 times. We plot the
average of the 2,000 samples as a function of the number of moves, and the
performance of the algorithm is captured by the rate at which these averages
approach the correct value.%

\begin{multicols}{2}%
\begin{center}
\includegraphics[
height=1.9813in,
width=2.559in
]%
{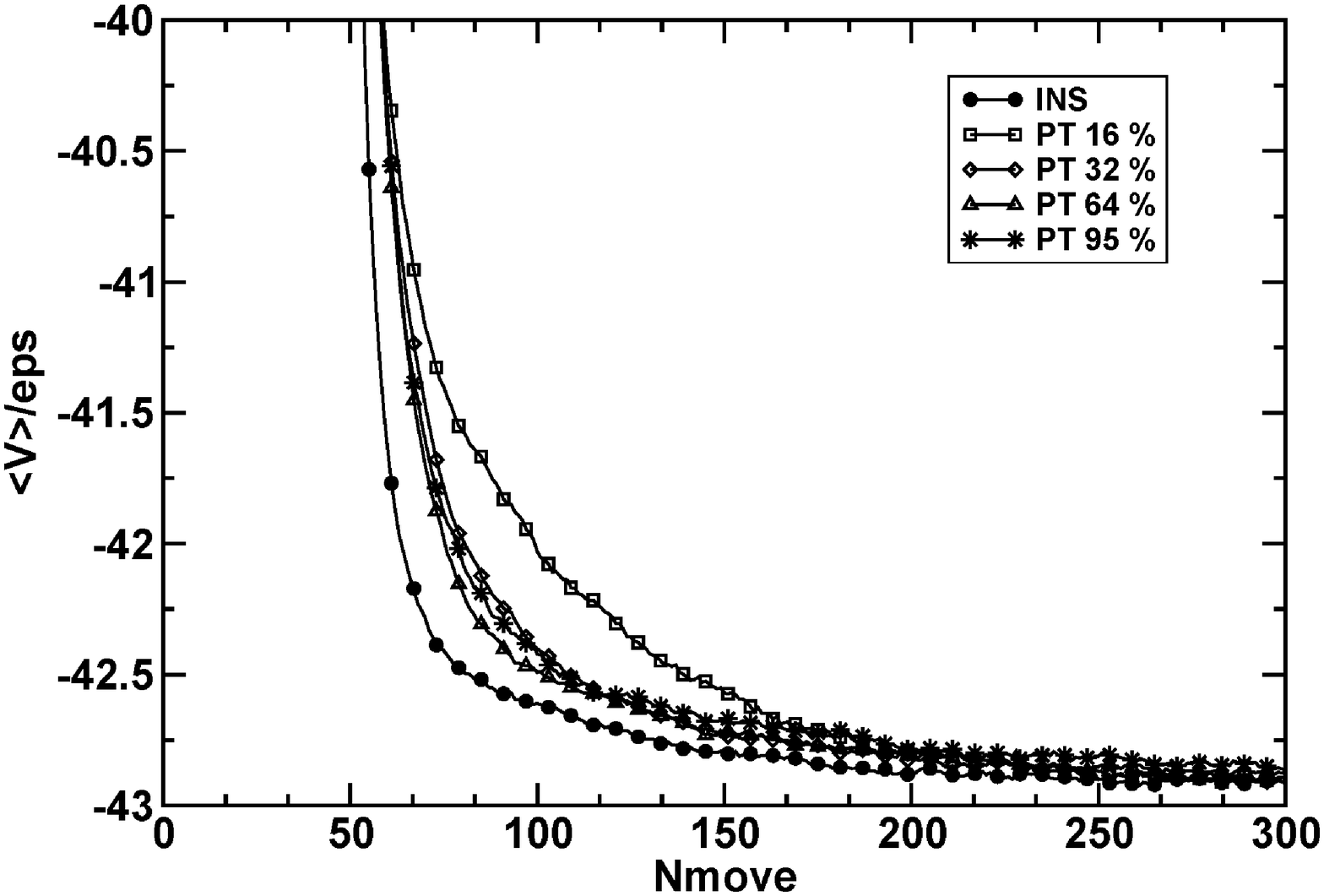}%
\\
Figure 3
\label{fig:3}%
\end{center}
%

\begin{center}
\includegraphics[
height=1.9813in,
width=2.559in
]%
{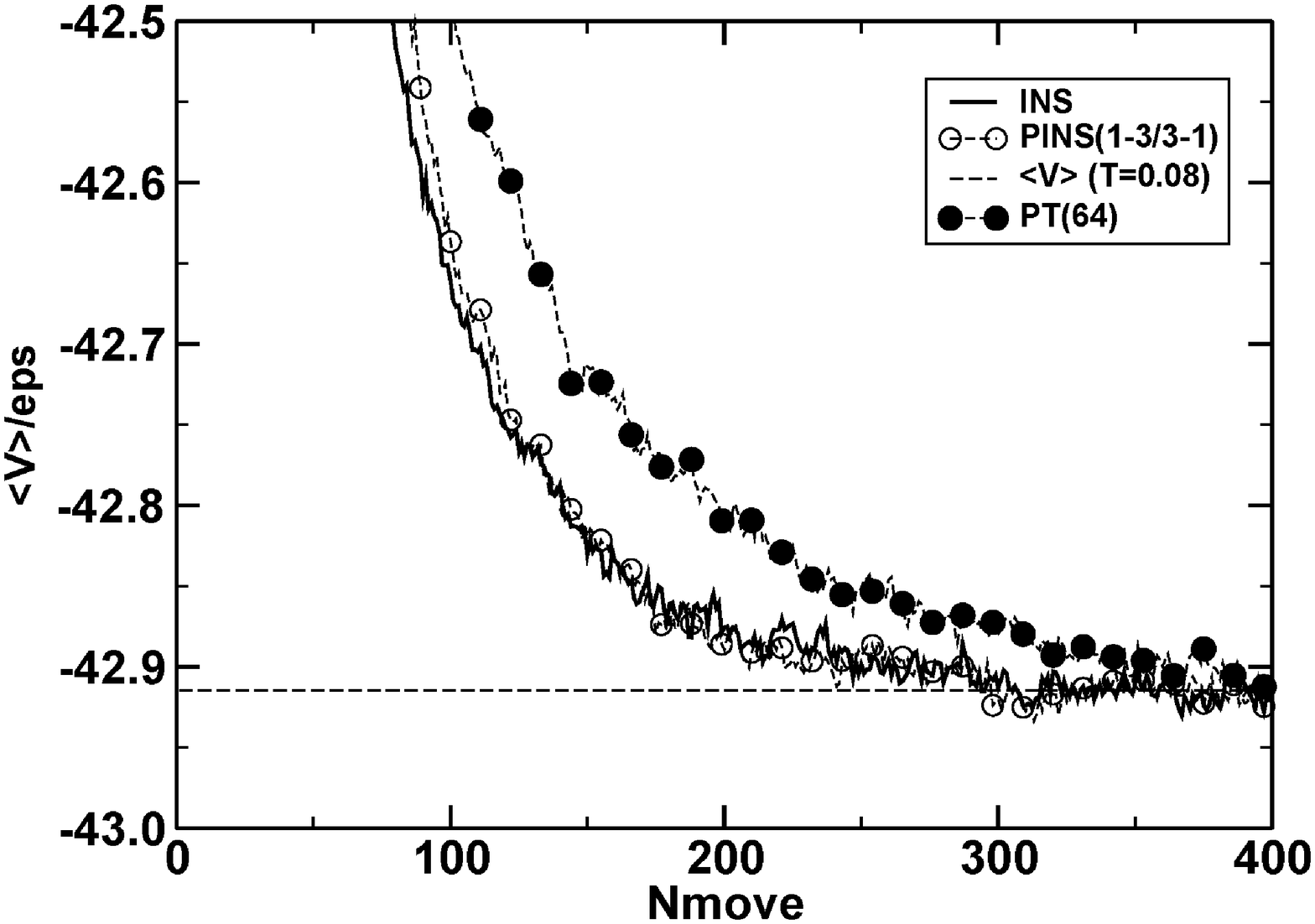}%
\\
Figure 4
\label{fig:4}%
\end{center}
\end{multicols}%

Figures 3 and 4 present data for a Lennard-Jones cluster of 13 atoms, using
the \textquotedblleft smart Monte Carlo\textquotedblright\ scheme of
\cite{rosdolfri} for the simulation of the dynamics, which produces a
relatively large move in configuration space for each step. The
\textquotedblleft true\textquotedblright\ value is approximately -42.92. This
is a relatively simple model, and was studied using only 4 temperatures. The
temperatures are dropped to the true values at step 50. Infinite swapping
converges more rapidly than any of the parallel tempering schemes. We see in
Figure 3 that the most efficient of the parallel tempering schemes appears to
use an attempted swap rate of around 64\%. [The rates that would typically be
used in such calculations are in the range of 5-10\%.] Figure 4 magnifies a
portion of the graph, but plots only the best parallel tempering result and
adds a partial infinite swapping result based on blocks of the form 1,3 and
3,1, and with a handoff at each Metropolis step. Little difference is observed
between the partial and full forms, though exclusive use of either of the
partial forms by itself performs poorly.%
\begin{center}
\includegraphics[
height=3.0026in,
width=3.8778in
]%
{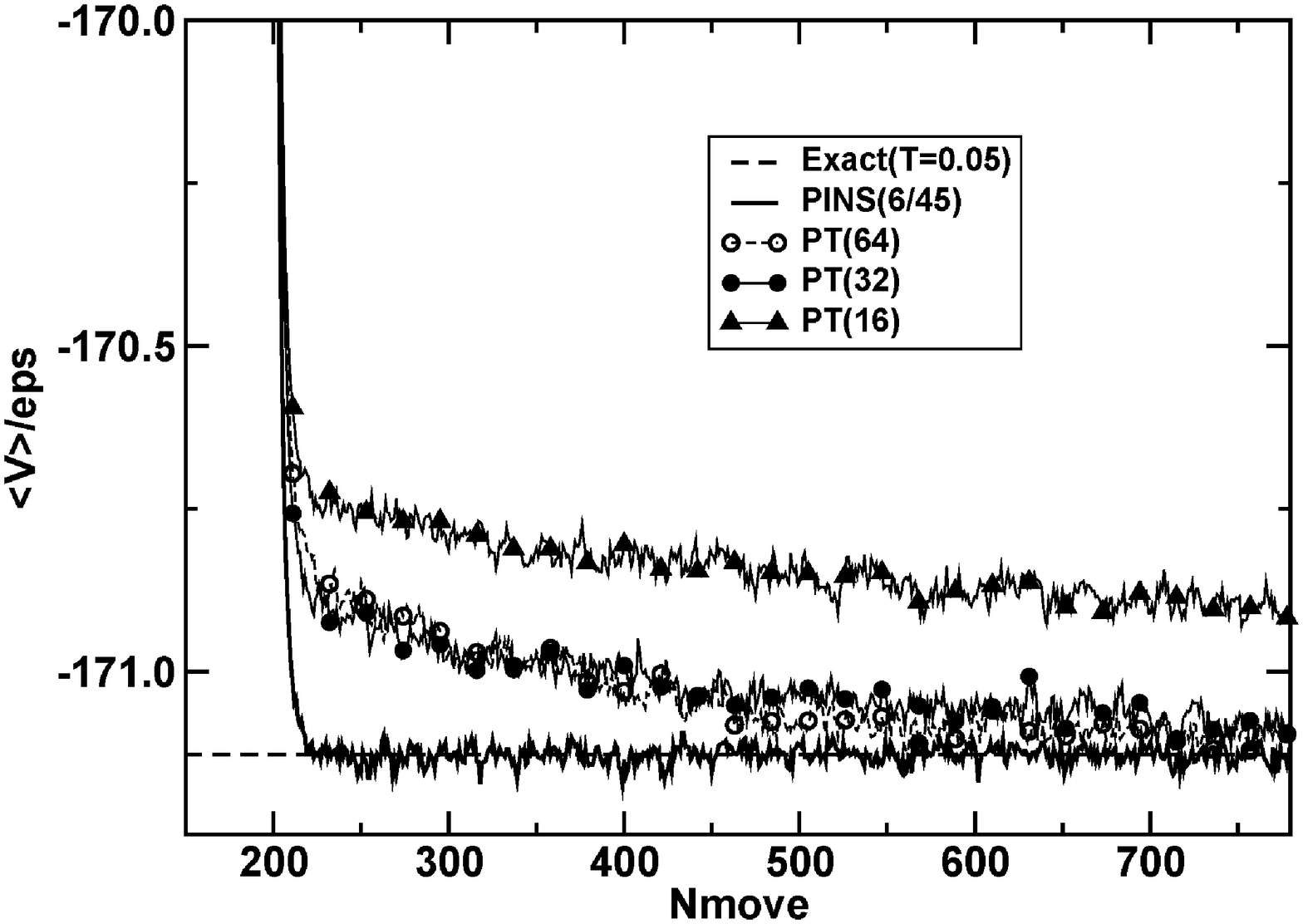}%
\\
Figure 5
\label{fig:5}%
\end{center}

The Lennard-Jones cluster of 13 atoms is not a particularly demanding problem,
but is presented so a comparison can be made between the full and partial
infinite swapping forms. A much more complex example is the Lennard-Jones
cluster of 38 atoms. Data for this example obtained using a 45 temperature
ensemble is given in Figure 5. Because full infinite swapping is impossible
for this larger computational ensemble, we use the partial form. For
comparison, results are also presented for parallel tempering.

The details concerning the computational methods underlying both the parallel
tempering and infinite swapping results of Figure 5 along with a discussion of
the temperature ensemble involved for this example can be found in
\cite{pladoldupliuwangub}. Briefly summarized, as with the previous 13-atom
Lennard-Jones example, Figure 5 denotes the results of a series of relaxation
experiments. Here, however, 45 temperatures are used with the lowest 15 being
involved in the heating/cooling process. The heating and cooling cycles
consist of 1200 smart Monte Carlo moves, each of one unit Lennard-Jones time
duration. The cooling segment is taken as the portion of the cycle from moves
200 to 800 with the remainder being the heating portion. During the cooling
portion of the cycle the 45 temperatures in the ensemble cover the range from
(0.050-0.210) in temperature steps of 0.005, and from (0.210 - 0.330) in steps
of 0.010 while during the heating portion of the cycle temperatures less than
or equal to 0.150 are set equal to 0.150. The results shown in Figure 5 are
obtained using 600 thermal cycles.

The 38-atom Lennard-Jones cluster has an interesting landscape. In particular,
while the global and lowest-lying local minima are similar in energy, the
minimum energy pathway that separates them involves appreciably higher
energies and contains 13 separate barriers \cite[Chapter 8.3]{wal2}. As
discussed in \cite{pladoldupliuwangub}, the partial infinite swapping approach
is appreciably more effective than conventional tempering approaches in
providing a proper sampling of this complex potential energy landscape.

\section{Appendix}

\subsection{Proof of Theorem \ref{thm:LDP_MP}}

Throughout the proof, we let $S=S_{1}\times S_{1}$, where $S_{1}%
\subset\mathbb{R}^{d}$ is convex and compact. Let $\mathcal{P}(S)$ denote the
Polish space of all probability measures on $S$ equipped with the topology of
weak convergence. For any probability measure $\nu\in\mathcal{P}(S)$, define
its mirror image $\nu^{R}\in\mathcal{P}(S)$ by requiring
\[
\nu^{R}(A\times B)=\nu(B\times A)
\]
for all Borel sets $A,B\subset\mathbb{R}^{d}$. Furthermore, as in the rest of
the paper, a bold symbol $\boldsymbol{x}\in S$ means $\boldsymbol{x}%
=(x_{1},x_{2})$, where $x_{1},x_{2}\in\mathbb{R}^{d}$, and $\boldsymbol{x}%
^{R}=$ $(x_{2},x_{1})$. We also use the notation
\[
\alpha(\boldsymbol{x},d\boldsymbol{y})\doteq\alpha_{1}(x_{1},dy_{1})\alpha
_{2}(x_{2},dy_{2}),
\]
which is a probability transition kernel defined on $S$ given $S$.

To prove the uniform large deviation principle, it suffices to prove the
equivalent uniform Laplace principle \cite[Chapter 1]{dupell4}. To simplify
the proof we have assumed that $S$ is compact. This would be the case if,
e.g., $V$ is defined with periodic boundary conditions. The general case can
be handled under (\ref{eq:growth_rate}) by using $V$ as a Lyapunov function
\cite[Section 8.2]{dupell4}. It will be convenient to split this into upper
and lower bounds. We also consider just the (more complicated) case where
$a_{T}\rightarrow\infty$ but $a_{T}<\infty$ for each $T$. Allowing
$a_{T}=\infty$ requires a different notation to handle this special case, but
does not change the structure of the proof otherwise.

We will show for any bounded continuous function $F:\mathcal{P}(S)\rightarrow
\mathbb{R}$ that
\begin{equation}
\lim_{T\rightarrow\infty}-\frac{1}{T}\log E\left[  \exp\{-TF(\eta_{T}^{a_{T}%
})\}\right]  =\inf_{\nu\in\mathcal{P}(S)}\left[  F(\nu)+I^{\infty}%
(\nu)\right]  . \label{eqn:Laplace}%
\end{equation}
By adding a constant to both sides we can assume $F\geq0$, and do so for the
rest of this section.

The proof of the uniform large deviation principle is based on the weak
convergence approach. The proof is complicated by the multiscale aspect of the
fast swapping process, as well as the fact that $\eta_{T}^{a_{T}}$ is a
weighted empirical measure that involves this fast process.

\subsection{Preliminary results}

\subsubsection{A representation}

We first state a stochastic control representation for the left hand side of
(\ref{eqn:Laplace}). As with the derivation of the infinite swapping process
via weak convergence, it will be necessary to work with the (distributionally
equivalent) temperature swapped processes for tightness to hold. In the
representation, all random variables used to construct $\eta_{T}^{a_{T}}$ are
replaced by random variables whose distribution is selected, and both the
distributions and the random variables will be distinguished from their
uncontrolled, original counterparts by an overbar. For this reason, while the
continuous time process is denoted by $\boldsymbol{Y}^{a}(t)$, we change
notation and use $\boldsymbol{U}^{a}(j)$ rather than $\boldsymbol{\bar{Y}}%
^{a}(j)$ to denote the discrete time process.

We first construct the temperature swapped process. Let $\alpha(\boldsymbol{x}%
,d\boldsymbol{y}|0)=\alpha(\boldsymbol{x},d\boldsymbol{y})$ and $\alpha
(\boldsymbol{x},d\boldsymbol{y}|1)=\alpha(\boldsymbol{x}^{R},d\boldsymbol{y}%
^{R})$, and let $\{N_{j}^{a},j=0,1,\ldots\}$ be iid geometric random variables
with parameter $1/(1+a)$, i.e. geometric random variables with mean $a$. Then
the random variables $\{\boldsymbol{U}^{a}(j),j=0,1,\ldots\}$, $\{M_{\ell}%
^{a}(j),j=0,1,\ldots,\ell=0,1,\ldots,N_{j}^{a}\}$ are constructed recursively
as follows. Given $M_{0}^{a}(j)=z$ and $\boldsymbol{U}^{a}(j)=\boldsymbol{x}$,
$\boldsymbol{U}^{a}(j+1)$ is distributed according to $\alpha(\boldsymbol{x}%
,d\boldsymbol{y}|z)$. The process $M_{\ell}^{a}(j),\ell=0,1,\ldots,N_{j}^{a}$
is a Markov chain with states $\left\{  0,1\right\}  $ and transition
probabilities%
\begin{equation}%
\begin{array}
[c]{cc}%
p(0,0|\boldsymbol{x})=g(\boldsymbol{x}) & p(0,1|\boldsymbol{x}%
)=1-g(\boldsymbol{x})\\
p(1,0|\boldsymbol{x})=1-g(\boldsymbol{x}^{R}) & p(1,0|\boldsymbol{x}%
)=g(\boldsymbol{x}^{R})
\end{array}
. \label{eqn:trans_probs}%
\end{equation}
The initial value for the subsequent interval is given by $M_{0}%
^{a}(j+1)=M_{N_{j}^{a}}^{a}(j)$. Letting $\{\tau_{j,\ell}^{a},i=0,1,\ldots
,\ell=0,1,\ldots,N_{j}^{a}-1\}$ be iid exponential random variables with mean
$1/a$, the temperature swapped process in continuous time is then given by%
\[
\boldsymbol{Y}^{a}(t)=\boldsymbol{U}^{a}(j)\text{ for }\sum_{i=0}^{j-1}%
\sum_{\ell=0}^{N_{j}^{a}-1}\tau_{i,\ell}^{a}\leq t<\sum_{i=0}^{j}\sum_{\ell
=0}^{N_{j}^{a}-1}\tau_{i,\ell}^{a},
\]
(with the convention that the sum from 0 to $-1$ is 0),
\[
Z^{a}(t)=M_{\ell}^{a}(j)\text{ for }\sum_{i=0}^{j-1}\sum_{k=0}^{\ell-1}%
\tau_{i,k}^{a}\leq t<\sum_{i=0}^{j-1}\sum_{k=0}^{\ell}\tau_{i,k}^{a},
\]
and lastly the ordinary and weighted empirical measures are given by%
\[
\psi_{T}^{a}=\frac{1}{T}\int_{0}^{T}\delta_{\boldsymbol{Y}^{a}(t)}dt\text{ and
}\eta_{T}^{a}=\frac{1}{T}\int_{0}^{T}\left[  1_{\left\{  Z^{a}(t)=0\right\}
}\delta_{\boldsymbol{Y}^{a}(t)}+1_{\left\{  Z^{a}(t)=1\right\}  }%
\delta_{\boldsymbol{Y}^{a}(t)^{R}}\right]  dt.
\]

Let $\sigma^{a}$ denote the exponential distribution with mean $1/a$ and let
$\beta^{a}$ denote the geometric distribution with mean $a$. For the
representation, all distributions [e.g., $\alpha(\boldsymbol{x}%
,d\boldsymbol{y}|z)$], can be perturbed from their original form, but such a
perturbation pays a relative entropy cost. We distinguish the new
distributions and random variables by using an overbar. Given $T\in(0,\infty
)$, let $R^{a}$ and $K^{a}$ be the discrete time indices when the continuous
time parameter reaches $T$, i.e.,
\begin{equation}
\sum_{i=0}^{R^{a}-2}\sum_{k=0}^{N_{i}^{a}-1}\tau_{i,k}^{a}+\sum_{k=0}%
^{K^{a}-1}\tau_{R^{a}-1,k}^{a}\leq T<\sum_{i=0}^{R^{a}-2}\sum_{k=0}^{N_{i}%
^{a}-1}\tau_{i,k}^{a}+\sum_{k=0}^{K^{a}}\tau_{R^{a}-1,k}^{a}.
\label{eqn:discrete_cont}%
\end{equation}

In this representation the barred quantities are constructed analogously to
their unbarred counterparts. Thus, e.g., $\bar{R}^{a}$ and $\bar{N}_{i}^{a}$
are defined by (\ref{eqn:discrete_cont}) but with $\tau_{i,k}^{a}$ replaced by
$\bar{\tau}_{i,k}^{a}$. Random variables corresponding to any given value of
$j$ are constructed in the order $\boldsymbol{\bar{U}}^{a}(j+1),\bar{N}%
_{j}^{a},\bar{M}_{\ell}^{a}(j),\bar{\tau}_{i,\ell}^{a},\ell=0,1,\ldots,\bar
{N}_{j}^{a}$, and then $j$ is updated to $j+1$. Barred measures, which are
also allowed to depend on discrete time, are used to construct the
corresponding barred random variables, e.g., $\boldsymbol{\bar{U}}^{a}(j+1)$
is (conditionally) distributed according to $\bar{\alpha}_{j}(\boldsymbol{\bar
{U}}^{a}(j),\cdot|\bar{M}_{0}^{a}(j))$. The infimum is over all collections of
measures $\{\bar{\alpha}_{j},\bar{\beta}_{j}^{a},\bar{p}_{j,\ell},\bar{\sigma
}_{j,\ell}^{a}\}$ and, although this is not denoted explicitly, any particular
measure can depend on all previously constructed random variables. To simplify
notation we let $\bar{N}_{\bar{R}^{a}}^{a}$ denote $\bar{K}^{a}$. We state the
representation for $\left\{  \eta_{T}^{a}\right\}  $, and note that an
analogous representation holds for $\left\{  \psi_{T}^{a}\right\}  $.

\begin{lemma}
\label{lem:rep}Let $G:\mathcal{P}(S)\times\mathbb{N}\rightarrow\mathbb{R}$ be
bounded from below and measurable. Then the representation%
\begin{align}
-  &  \frac{1}{T}\log E\left[  \exp\{-TG(\eta_{T}^{a},R^{a})\}\right]  =\inf
E\left[  \rule{0pt}{24pt}\rule{0pt}{18pt}\rule{0pt}{19pt}\rule{0pt}{18pt}%
G(\bar{\eta}_{T}^{a},\bar{R}^{a})\right. \nonumber\label{eqn:RE_rep}\\
&  \quad+\frac{1}{T}\sum_{i=0}^{\bar{R}^{a}-1}\left[  R\left(  \bar{\alpha
}_{i}(\boldsymbol{\bar{U}}^{a}(i),\cdot|\bar{M}_{0}^{a}(i))\left\Vert
\alpha(\boldsymbol{\bar{U}}^{a}(i),\cdot|\bar{M}_{0}^{a}(i))\right.  \right)
+R\left(  \bar{\beta}_{i}^{a}\left\Vert \beta^{a}\right.  \right)  \right]
\nonumber\\
&  \quad+\left.  \frac{1}{T}\sum_{i=0}^{\bar{R}^{a}-1}\sum_{k=0}^{\bar{N}%
_{i}^{a}-1}\left[  R\left(  \bar{p}_{i,k}(\bar{M}_{k}^{a}(i),\boldsymbol{\cdot
}|\boldsymbol{\bar{U}}^{a}(i))\left\Vert p(\bar{M}_{k}^{a}%
(i),\boldsymbol{\cdot}|\boldsymbol{\bar{U}}^{a}(i))\right.  \right)  +R\left(
\bar{\sigma}_{i,k}^{a}\left\Vert \sigma^{a}\right.  \right)  \right]
\rule{0pt}{18pt}\rule{0pt}{15pt}\right]  \rule{0pt}{18pt}\rule{0pt}{18pt}%
\rule{0pt}{19pt}\rule{0pt}{21pt}\rule{0pt}{23pt}\nonumber
\end{align}
is valid.
\end{lemma}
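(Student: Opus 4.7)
\subsection*{Proof plan for Lemma \ref{lem:rep}}

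The strategy is to invoke the standard variational formula for relative entropy and then apply the chain rule for relative entropy to decompose the total cost according to the sequential construction of the controlled process. Let $\Omega$ be the Polish space on which the entire family of driving random variables
\[
\bigl(\{\boldsymbol{U}^{a}(j)\}_{j\geq 0},\{N_{j}^{a}\}_{j\geq 0},\{M_{\ell}^{a}(j)\}_{j,\ell},\{\tau_{i,\ell}^{a}\}_{i,\ell}\bigr)
\]
is defined, and let $P$ denote the joint law under the original (unperturbed) dynamics. Both $\eta_{T}^{a}$ and $R^{a}$ are measurable functionals of this path. The plan is to show the right-hand side is an infimum of
$E_{\bar P}[G(\bar\eta_T^a,\bar R^a)] + T^{-1}R(\bar P\|P)$
over all $\bar P \ll P$, and then expand $R(\bar P\|P)$ as the sum indicated.

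First I would apply the Donsker--Varadhan variational formula
\[
-\log E\bigl[\exp\{-TG(\eta_{T}^{a},R^{a})\}\bigr]
=\inf_{\bar{P}\ll P}\Bigl\{E_{\bar{P}}\bigl[TG(\bar{\eta}_{T}^{a},\bar{R}^{a})\bigr]+R(\bar{P}\Vert P)\Bigr\},
\]
valid because $G$ is bounded from below (see, e.g., Proposition 1.4.2 of \cite{dupell4}). The set of absolutely continuous $\bar P$ is in bijection with collections of regular conditional distributions that generate the variables in the chosen order $\boldsymbol{\bar U}^{a}(j+1),\bar N_{j}^{a},\bar M_{\ell}^{a}(j),\bar\tau_{i,\ell}^{a}$, and each such collection can depend on all previously generated variables. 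Dividing by $T$ already yields the left side of \eqref{eqn:RE_rep}; it remains to identify $T^{-1}R(\bar P\|P)$ with the sum of conditional relative entropies appearing in the lemma.

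Next I would apply the chain rule for relative entropy to the sequential factorization of $\bar P$ and $P$. Because both measures share the same ordered construction, their Radon--Nikodym derivative factors as a product of conditional Radon--Nikodym derivatives; taking logarithms and expectations under $\bar P$ produces
\[
R(\bar{P}\Vert P)=E_{\bar P}\!\left[\sum_{i=0}^{\infty}\!\Bigl(R(\bar\alpha_i\Vert\alpha)+R(\bar\beta_i^a\Vert\beta^a)+\!\!\sum_{k=0}^{\bar N_i^a-1}\!\!\bigl(R(\bar p_{i,k}\Vert p)+R(\bar\sigma_{i,k}^a\Vert\sigma^a)\bigr)\!\Bigr)\right],
\]
where each conditional entropy is evaluated at the relevant conditioning variables (see Theorem C.3.1 of \cite{dupell4}). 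This is the point at which the explicit order of the construction is crucial, so that each conditional law is perturbed against the correct reference kernel.

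The main obstacle is that the functional $G(\bar\eta_{T}^{a},\bar R^{a})$ depends only on the random variables produced up to the stopping index $\bar R^{a}$ determined by \eqref{eqn:discrete_cont}, whereas the chain rule naively yields an infinite sum. I would handle this by showing that the infimum is attained (or approximated) by controls that agree with the reference kernels beyond index $\bar R^{a}$: for any admissible control, replacing $\bar\alpha_i,\bar\beta_i^a,\bar p_{i,k},\bar\sigma_{i,k}^a$ by the unperturbed kernels for $i\geq \bar R^{a}$ leaves $G(\bar\eta_T^a,\bar R^a)$ unchanged (since those random variables play no role in the functional) and can only decrease the total relative entropy, because the contributions corresponding to indices $i\geq \bar R^{a}$ become zero. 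Formally this uses the measurability of the event $\{i\leq \bar R^{a}\}$ with respect to the $\sigma$-algebra generated by variables produced strictly before the $i$-th block, together with the tower property applied to the conditional expectation of each nonnegative entropy term. Truncating the infinite sum at $\bar R^{a}$ then gives exactly the expression on the right-hand side of \eqref{eqn:RE_rep}, and taking the infimum over admissible controls completes the proof.
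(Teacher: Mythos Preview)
Your proposal is correct and follows essentially the same route the paper sketches: apply the relative-entropy variational formula and then the chain rule (the paper cites \cite[Section B.2]{dupell4}) to decompose the cost along the sequential construction. The only minor difference is in handling the random number of steps: the paper remarks that ``this case can easily be reduced to the case with a fixed deterministic number of steps,'' whereas you keep the random stopping index and argue directly that controls beyond $\bar R^{a}$ may be reset to the reference kernels without changing $G(\bar\eta_T^a,\bar R^a)$ and with no increase in cost, using that $\{\bar R^{a}\geq i+1\}$ is measurable with respect to the variables generated through block $i-1$. Both devices are standard and yield the same representation.
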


The proof of such representations follow from the chain rule for relative
entropy (see, e.g., \cite[Section B.2]{dupell4}). A novel feature of the
representation here is that the total number of discrete time steps is random.
However, this case can easily be reduced to the case with a fixed
deterministic number of steps.

\subsubsection{Rate for the ordinary empirical measure}

\textbf{Notation for marginals.} We will frequently factor measures on product
spaces in the proof. For a (deterministic) probability measure $\nu$ on a
product space such as $S^{1}\times S^{2}\times S^{3}$, with each $S^{i}$ a
Polish space, we use notation such as $\nu_{1,2}$ to denote the marginal
distribution on the first 2 components, and notation such as $\nu_{1|3}$ to
denote the conditional distribution on the first component given the third.
When $\nu$ is a measurable random measure these can all be chosen so that they
are also measurable.

We will make use of the rate function for the ordinary empirical measure. Let
$\varphi(\boldsymbol{x},d\boldsymbol{y})=\alpha(\boldsymbol{x},d\boldsymbol{y}%
|0)\rho_{0}(\boldsymbol{x})+\alpha(\boldsymbol{x},d\boldsymbol{y}|1)\rho
_{1}(\boldsymbol{x})$, where $\rho_{0}(\boldsymbol{x})=\rho(\boldsymbol{x})$
and $\rho_{1}(\boldsymbol{x})=\rho(\boldsymbol{x}^{R})$, and let $\bar{\mu
}(d\boldsymbol{x})=[\pi(\boldsymbol{x})+\pi(\boldsymbol{x}^{R}%
)]d\boldsymbol{x}/2$ be its unique invariant probability distribution. If
$\gamma$ is absolutely continuous with respect to $\bar{\mu}$ with
$\kappa(\boldsymbol{x})=[d\gamma/d\bar{\mu}](\boldsymbol{x})$, then set%
\[
K(\gamma)=1-\int\sqrt{\kappa(\boldsymbol{x})\kappa(\boldsymbol{y})}\bar{\mu
}(d\boldsymbol{x})\varphi(\boldsymbol{x},d\boldsymbol{y}).
\]
Note that $K$ is convex. We then extend the definition to all of
$\mathcal{P}(S)$ via lower semicontinuous regularization with respect to the
weak topology. Thus if $\gamma_{i}\rightarrow\gamma$ in the weak topology and
if each $\gamma_{i}$ is absolutely continuous with respect to $\bar{\mu}$,
then $\liminf_{i}K(\gamma_{i})\geq K(\gamma)$, and we have equality for at
least one such sequence. Note that since $\bar{\mu}$ is mutually absolutely
continuous with respect to Lebesgue measure, this means that $K(\gamma)\leq1$
for all $\gamma\in\mathcal{P}(S)$.

The following lemma will help relate weak limits of quantities in the
representations to the rate function of the ordinary empirical measure.

\begin{lemma}
\label{lem:rate_relation}Let $\gamma\in\mathcal{P}(S)$ be absolutely
continuous with respect to $\bar{\mu}$ and let $\kappa=[d\gamma/d\bar{\mu}]$.
Assume $A\in(0,\infty),\nu\in\mathcal{P}(S\times S)$ is such that $[\nu
]_{1}=[\nu]_{2}$, $R(\nu\Vert\lbrack\nu]_{1}\otimes\varphi)<\infty$, $r$ is
such that $r[\nu]_{1}=\kappa\bar{\mu}$, and $0\leq-\int\log r(\boldsymbol{y}%
)[\nu]_{1}(d\boldsymbol{y})<\infty$. Then
\begin{align}
K(\gamma)  &  =1-\int\sqrt{\kappa(\boldsymbol{x})\kappa(\boldsymbol{y})}%
\bar{\mu}(d\boldsymbol{x})\varphi(\boldsymbol{x},d\boldsymbol{y}%
)\label{eqn:two_rates}\\
&  \leq AR(\nu\Vert\lbrack\nu]_{1}\otimes\varphi)-A\int\log r(\boldsymbol{y}%
)[\nu]_{1}(d\boldsymbol{y})+A\log A-A+1.\nonumber
\end{align}

\end{lemma}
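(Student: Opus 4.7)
The plan is to establish (\ref{eqn:two_rates}) in two steps. First, I would prove the intermediate bound
\[
K(\gamma)\;\le\;1-e^{-C(\nu)}, \qquad C(\nu):=R(\nu\,\|\,[\nu]_{1}\otimes\varphi)-\int\log r(\boldsymbol{y})\,[\nu]_{1}(d\boldsymbol{y}),
\]
by a single application of Jensen's inequality (equivalently, Donsker--Varadhan duality) after changing measure from $\bar\mu\otimes\varphi$ to $\nu$. Second, I would apply the elementary Young inequality $ab\le a\log a-a+e^{b}$, valid for $a>0,\ b\in\mathbb{R}$ and obtained by minimising $a\log a-a+e^{b}-ab$ in $a$, with the choice $a=A$ and $b=-C(\nu)$ to rearrange the exponential bound into the affine-in-$A$ form $1-e^{-C(\nu)}\le A\,C(\nu)+A\log A-A+1$. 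Chaining the two inequalities yields exactly (\ref{eqn:two_rates}).

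For the first step, the identity $r[\nu]_{1}=\kappa\bar\mu$ together with $-\int\log r\,d[\nu]_{1}<\infty$ forces $r>0$ and $\kappa>0$ $[\nu]_{1}$-a.s., so $\bar\mu(d\boldsymbol{x})=(r(\boldsymbol{x})/\kappa(\boldsymbol{x}))\,[\nu]_{1}(d\boldsymbol{x})$ on the essential support of $[\nu]_{1}$; with $q:=d\nu/d([\nu]_{1}\otimes\varphi)$ a direct substitution gives
\[
\int\sqrt{\kappa(\boldsymbol{x})\kappa(\boldsymbol{y})}\,\bar\mu(d\boldsymbol{x})\varphi(\boldsymbol{x},d\boldsymbol{y})\;=\;\int\sqrt{\tfrac{\kappa(\boldsymbol{y})}{\kappa(\boldsymbol{x})}}\,\frac{r(\boldsymbol{x})}{q(\boldsymbol{x},\boldsymbol{y})}\,\nu(d\boldsymbol{x}\,d\boldsymbol{y}).
\]
Applying Jensen to the concave function $\log$ under the probability measure $\nu$ yields
\[
\log\!\int\!\sqrt{\kappa\kappa}\,d(\bar\mu\otimes\varphi)\;\ge\;\tfrac12\!\int\!\log\kappa\,d[\nu]_{2}-\tfrac12\!\int\!\log\kappa\,d[\nu]_{1}+\int\!\log r\,d[\nu]_{1}-R(\nu\,\|\,[\nu]_{1}\otimes\varphi),
\]
and the first two terms cancel by the stationarity $[\nu]_{1}=[\nu]_{2}$, leaving $-C(\nu)$ on the right; exponentiation gives $K(\gamma)\le 1-e^{-C(\nu)}$.

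The main technical obstacle I expect lies in making the cancellation of the $\log\kappa$ terms rigorous: each of $\int\log\kappa\,d[\nu]_{j}$ can individually be $+\infty$, producing a formal $\infty-\infty$ even though $[\nu]_{1}=[\nu]_{2}$ as measures. I would handle this by a truncation argument, applying the Jensen step instead to the bounded surrogate $\tfrac12(\log\kappa_{n}(\boldsymbol{y})-\log\kappa_{n}(\boldsymbol{x}))$ with $\kappa_{n}:=(\kappa\vee n^{-1})\wedge n$, whose $\nu$-integral vanishes identically by $[\nu]_{1}=[\nu]_{2}$, and passing to the limit $n\to\infty$ by monotone/dominated convergence on the Hellinger integral on the left and using the given finiteness of the two relative-entropy-type quantities on the right of (\ref{eqn:two_rates}). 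The change of measure $\bar\mu\mapsto(r/\kappa)[\nu]_{1}$ is valid on $\{\kappa>0\}$, and the remaining negligibility of $\{\kappa=0\}$ under $\nu$ is automatic from $[\nu]_{1}\ll\kappa\bar\mu$ (itself a consequence of $r>0$ $[\nu]_{1}$-a.s.).
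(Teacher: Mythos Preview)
Your two-step architecture---first the exponential bound $K(\gamma)\le 1-e^{-C(\nu)}$ via Jensen/Donsker--Varadhan, then the Young inequality $1-e^{-C}\le AC+A\log A-A+1$---is exactly what the paper does. The only substantive difference is how the $\infty-\infty$ issue with $\int\log\kappa\,d[\nu]_{j}$ is resolved.

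You propose a truncation $\kappa_{n}=(\kappa\vee n^{-1})\wedge n$ and a limit passage. This is workable in spirit, but the limit step is not as clean as you suggest: once you carry $\kappa_{n}$ through the change of measure you end up with integrals like $\int\sqrt{\kappa_{n}(\boldsymbol{y})/\kappa_{n}(\boldsymbol{x})}\,\kappa(\boldsymbol{x})\,\bar\mu(d\boldsymbol{x})\varphi(\boldsymbol{x},d\boldsymbol{y})$, and producing a dominating function uniform in $n$ (or a monotone arrangement) requires more care than ``monotone/dominated convergence on the Hellinger integral.'' The paper avoids truncation entirely by observing that the hypotheses already force the individual $\log\kappa$ integrals to be finite. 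Specifically, $R(\nu\Vert[\nu]_{1}\otimes\varphi)<\infty$ together with the fact that $\bar\mu$ is the invariant distribution of $\varphi$ implies $R([\nu]_{1}\Vert\bar\mu)<\infty$ (this is \cite[Lemma~8.6.2]{dupell4}); writing $R([\nu]_{1}\Vert\bar\mu)=\int\log(\kappa/r)\,d[\nu]_{1}$ and combining with $-\int\log r\,d[\nu]_{1}<\infty$ gives $-\int\log\kappa\,d[\nu]_{1}>-\infty$, so the cancellation is legitimate without any approximation. The paper then applies the duality inequality directly against $\bar\mu\otimes\varphi$ (rather than your change of measure to $\nu$) and uses the chain rule $R(\nu\Vert\bar\mu\otimes\varphi)=R(\nu\Vert[\nu]_{1}\otimes\varphi)+R([\nu]_{1}\Vert\bar\mu)$ to reduce to $C(\nu)$.

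One small correction: your displayed change-of-measure identity for the Hellinger integral is in general only an inequality ($\ge$), since $[\nu]_{1}\otimes\varphi$ may charge sets where $q=0$; this is harmless because the inequality goes in the direction you need.
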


\begin{proof}
Let $C$ be the set where $\kappa(\boldsymbol{x})=0$. Then $-\int\log
r(\boldsymbol{y})[\nu]_{1}(d\boldsymbol{y})<\infty$ implies $r(\boldsymbol{y}%
)>0$ a.s. with respect to $[\nu]_{1}(d\boldsymbol{y})$, and we also have $\int
r(\boldsymbol{y})[\nu]_{1}(d\boldsymbol{y})=1$, so that $r(\boldsymbol{y}%
)<\infty$ a.s. with respect to $[\nu]_{1}(d\boldsymbol{y})$. It follows that
$[\nu]_{1}(C)=0$. Now suppose that
\[
\int\sqrt{\kappa(\boldsymbol{x})\kappa(\boldsymbol{y})}\bar{\mu}%
(d\boldsymbol{x})\varphi(\boldsymbol{x},d\boldsymbol{y})=0.
\]
Then $\kappa(\boldsymbol{x})\kappa(\boldsymbol{y})=0$ a.s. with respect to
Lebesgue measure, and so if $\kappa(\boldsymbol{x})\neq0$ then $\varphi
(\boldsymbol{x},\left\{  \boldsymbol{y}:\kappa(\boldsymbol{y})>0\right\}
)=0$, or $\varphi(\boldsymbol{x},C)=1$. Thus $\nu(\left(  S\backslash
C\right)  \times C)=0$, while $[\nu]_{1}\otimes\varphi(\left(  S\backslash
C\right)  \times C)=1$, which implies $R(\nu\Vert\lbrack\nu]_{1}\otimes
\varphi)=\infty$, which is a contradiction. We conclude that
\[
\int\sqrt{\kappa(\boldsymbol{x})\kappa(\boldsymbol{y})}\bar{\mu}%
(d\boldsymbol{x})\varphi(\boldsymbol{x},d\boldsymbol{y})>0.
\]
Since also
\[
\int\sqrt{\kappa(\boldsymbol{x})\kappa(\boldsymbol{y})}\bar{\mu}%
(d\boldsymbol{x})\varphi(\boldsymbol{x},d\boldsymbol{y})\leq1,
\]
it follows that
\[
-\log\int\sqrt{\kappa(\boldsymbol{x})\kappa(\boldsymbol{y})}\bar{\mu
}(d\boldsymbol{x})\varphi(\boldsymbol{x},d\boldsymbol{y})\in\lbrack0,\infty).
\]

Since $R(\nu\Vert\lbrack\nu]_{1}\otimes\varphi)<\infty$ and since $\bar{\mu}$
is the invariant distribution of $\varphi$ it follows that $R([\nu]_{1}%
\Vert\bar{\mu})<\infty$ \cite[Lemma 8.6.2]{dupell4}. This means that
\[
\int\log\frac{\kappa(\boldsymbol{x})}{r(\boldsymbol{x})}[\nu]_{1}%
(d\boldsymbol{x})<\infty,
\]
and since $-\int\log r(\boldsymbol{y})[\nu]_{1}(d\boldsymbol{y})<\infty$, it
follows that $-\int\log\kappa(\boldsymbol{x})[\nu]_{1}(d\boldsymbol{x}%
)>-\infty$. From $[\nu]_{1}=[\nu]_{2}$ we conclude that
\begin{equation}
-\frac{1}{2}\int\left[  \log\kappa(\boldsymbol{x})+\log\kappa(\boldsymbol{y}%
)\right]  \nu(d\boldsymbol{x},d\boldsymbol{y})>-\infty. \label{eqn:RE_LB}%
\end{equation}

By relative entropy duality (\cite[Proposition 1.4.2]{dupell4})
\begin{align*}
&  -\log\int\sqrt{\kappa(\boldsymbol{x})\kappa(\boldsymbol{y})}\bar{\mu
}(d\boldsymbol{x})\varphi(\boldsymbol{x},d\boldsymbol{y})\\
&  =-\log\int e^{\frac{1}{2}\left[  \log\kappa(\boldsymbol{x})+\log
\kappa(\boldsymbol{y})\right]  }\bar{\mu}(d\boldsymbol{x})\varphi
(\boldsymbol{x},d\boldsymbol{y})\\
&  \leq R(\nu\Vert\bar{\mu}\otimes\varphi)-\frac{1}{2}\int\left[  \log
\kappa(\boldsymbol{x})+\log\kappa(\boldsymbol{y})\right]  \nu(d\boldsymbol{x}%
,d\boldsymbol{y})
\end{align*}
is valid as long as the right hand side is not of the form $\infty-\infty$,
which is true by (\ref{eqn:RE_LB}). The chain rule then gives
\begin{align*}
&  -\log\int e^{\frac{1}{2}\left[  \log\kappa(\boldsymbol{x})+\log
\kappa(\boldsymbol{y})\right]  }\bar{\mu}(d\boldsymbol{x})\varphi
(\boldsymbol{x},d\boldsymbol{y})\\
&  \quad\leq R(\nu\Vert\bar{\mu}\otimes\varphi)-\int\log\kappa(\boldsymbol{x}%
)[\nu]_{1}(d\boldsymbol{x})\\
&  \quad=R(\nu\Vert\lbrack\nu]_{1}\otimes\varphi)+R([\nu]_{1}\Vert\bar{\mu
})-\int\log\kappa(\boldsymbol{x})[\nu]_{1}(d\boldsymbol{x})\\
&  \quad=R(\nu\Vert\lbrack\nu]_{1}\otimes\varphi)-\int\log r(\boldsymbol{x}%
)[\nu]_{1}(d\boldsymbol{x}),
\end{align*}
and thus%
\[
-\int\sqrt{\kappa(\boldsymbol{x})\kappa(\boldsymbol{y})}\bar{\mu
}(d\boldsymbol{x})\varphi(\boldsymbol{x},d\boldsymbol{y})\leq-e^{-R(\nu
\Vert\lbrack\nu]_{1}\otimes\varphi)+\int\log r(\boldsymbol{x})[\nu
]_{1}(d\boldsymbol{x})}.
\]
Then (\ref{eqn:two_rates}) follows from the fact that if $a\in\mathbb{R}$ and
$b\in(0,\infty)$ then $-e^{-a}\leq ab+b\log b-b$ by taking $a=R(\nu
\Vert\lbrack\nu]_{1}\otimes\varphi)-\int\log r(\boldsymbol{x})[\nu
]_{1}(d\boldsymbol{x})$ and $b=A$.
\end{proof}

\subsubsection{Decomposition of the exponential distribution}

In the construction of $\eta_{T}^{a}$ we used independent exponential random
variables $\tau_{i,k}^{a}$ and geometric random variables $N_{i}^{a}$, and the
fact that for each $i$
\[
\sum_{\ell=0}^{N_{i}^{a}-1}\tau_{i,\ell}^{a}
\]
is exponential with mean one. This deomposition corresponds to a relationship
in relative entropies, which we now state.

\begin{lemma}
\label{lem:RE_decomp}For $a\in(0,\infty)$, let $\bar{N}^{a}$ be distributed
according to a random probability measure $\bar{\beta}^{a}$ on $\left\{
0,1,\ldots\right\}  $. Given $\bar{N}^{a}=\ell$, for $k\in\left\{  0,1,\ldots
l\right\}  $ let $\bar{\tau}_{k}^{a}(\ell)$ be distributed according to a
random probability measure $\bar{\sigma}_{k}^{a}(\ell)$ on $[0,\infty)$. Let
$\bar{\sigma}$ be the distribution of the random variable
\[
\bar{\tau}=\sum_{k=0}^{\bar{N}^{a}-1}\bar{\tau}_{k}^{a}(\bar{N}^{a}).
\]
Then%
\[
E\left[  R\left(  \bar{\beta}^{a}\left\Vert \beta^{a}\right.  \right)
+\sum_{k=0}^{\bar{N}^{a}-1}R\left(  \bar{\sigma}_{k}^{a}(\bar{N}%
^{a})\left\Vert \sigma^{a}\right.  \right)  \right]  \geq E\left[  R\left(
\bar{\sigma}\left\Vert \sigma^{1}\right.  \right)  \right]  .
\]

\end{lemma}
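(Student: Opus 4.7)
The plan is to recognize the left-hand side as the relative entropy of the joint (conditional) distribution of $(\bar{N}^a, \bar{\tau}_0^a(\bar{N}^a), \ldots, \bar{\tau}_{\bar{N}^a-1}^a(\bar{N}^a))$ against its reference counterpart, computed via the chain rule, and then to exploit the data-processing inequality under the deterministic sum map, using the fact already noted in Subsection~\ref{sec:finite_swap} that under the reference dynamics this sum is exponential with mean one. Taking outer expectations yields the claim.

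Concretely, I would fix a realization of the randomness generated before $\bar{N}^a$ is sampled, so that $\bar\beta^a$, each $\bar\sigma_k^a(\ell)$, and the law $\bar\sigma$ of $\bar\tau$ all become deterministic probability measures. On the disjoint-union state space
\[
\mathcal{X} \doteq \bigsqcup_{\ell=0}^\infty \{\ell\} \times [0,\infty)^\ell
\]
define
\[
\bar Q(\{\ell\} \times B) \doteq \bar\beta^a(\{\ell\}) \bigl(\bar\sigma_0^a(\ell) \otimes \cdots \otimes \bar\sigma_{\ell-1}^a(\ell)\bigr)(B),
\]
and define $Q$ analogously with $\beta^a$ and $(\sigma^a)^{\otimes \ell}$ in place of the barred measures. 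Two successive applications of the chain rule for relative entropy---first splitting off the marginal on $\ell$, then applying the product rule to the conditional---give
\[
R(\bar Q \| Q) = R(\bar\beta^a \| \beta^a) + \sum_{\ell=0}^\infty \bar\beta^a(\{\ell\}) \sum_{k=0}^{\ell-1} R(\bar\sigma_k^a(\ell) \| \sigma^a),
\]
and the second term on the right is exactly the conditional expectation, given the past, of $\sum_{k=0}^{\bar N^a - 1} R(\bar\sigma_k^a(\bar N^a) \| \sigma^a)$.

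Next, let $\Phi:\mathcal{X} \to [0,\infty)$ be the measurable sum map $\Phi(\ell, s_0, \ldots, s_{\ell-1}) = s_0 + \cdots + s_{\ell-1}$. By construction $\bar Q \circ \Phi^{-1}$ equals the conditional law $\bar\sigma$ of $\bar\tau$, while $Q \circ \Phi^{-1} = \sigma^1$ is precisely the classical geometric-sum-of-exponentials identity invoked in Subsection~\ref{sec:finite_swap}. The monotonicity of relative entropy under measurable push-forwards (the data-processing inequality) then yields
\[
R(\bar\sigma \| \sigma^1) = R(\bar Q \circ \Phi^{-1} \| Q \circ \Phi^{-1}) \le R(\bar Q \| Q).
\]
Combining this with the chain-rule identity, and taking the outer expectation over the past, produces the stated inequality.

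The only genuine work is bookkeeping: identifying the correct joint reference measure on the disjoint-union space $\mathcal{X}$, justifying the chain rule in this slightly non-standard setting where the inner product has a random number of factors, and invoking the geometric-sum-of-exponentials distributional identity already used freely in Subsection~\ref{sec:finite_swap}. No new estimate is required; the lemma is ultimately just an instance of the classical fact that relative entropy decreases under measurable coarsenings.
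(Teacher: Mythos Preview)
Your proposal is correct and follows essentially the same strategy as the paper: identify the left-hand side as a joint relative entropy via the chain rule, then apply the data-processing inequality under the sum map together with the geometric-sum-of-exponentials identity.

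The one technical difference is the choice of ambient space. You work on the disjoint union $\mathcal{X}=\bigsqcup_{\ell\ge 0}\{\ell\}\times[0,\infty)^{\ell}$, which is conceptually direct but, as you note, makes the chain rule slightly non-standard because the conditional factor has a variable number of coordinates. The paper instead embeds everything in the fixed product space $\mathbb{N}_{+}\times\prod_{i=0}^{\infty}[0,\infty)$, padding the unused coordinates $k\ge\ell$ with the reference law $\sigma^{a}$ under both $\mu$ and $\bar{\mu}$; these padded coordinates then contribute zero to the relative entropy and the chain rule applies in its standard form. The padding trick is a small but convenient device that sidesteps the bookkeeping you flag as ``the only genuine work.'' Otherwise the two arguments are identical in content.
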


\begin{proof}
Define a measure $\mu$ on the space $%
\mathbb{N}
_{+}\times\prod_{i=0}^{\infty}[0,\infty)$ as follows. For any $\ell\in%
\mathbb{N}
_{+}$ and any sequence $A_{0},A_{1},\ldots$ of Borel measurable subsets of
$[0,\infty)$,
\[
\mu\left(  \left\{  l\right\}  \times A_{0}\times A_{1}\times\ldots\right)
=\beta^{a}\left(  l\right)  \prod_{k=0}^{\infty}\sigma^{a}\left(
A_{k}\right)  ,
\]
and similarly define the measure $\bar{\mu}$ by%
\[
\bar{\mu}\left(  \left\{  l\right\}  \times A_{0}\times A_{1}\times
\ldots\right)  =\bar{\beta}^{a}\left(  l\right)  \left(  \prod_{k=0}^{\ell
-1}\bar{\sigma}_{k}^{a}(\ell)\left(  A_{k}\right)  \right)  \left(
\prod_{k=\ell}^{\infty}\sigma^{a}\left(  A_{k}\right)  \right)  .
\]
Then by the chain rule of relative entropy%
\[
E\left[  R\left(  \bar{\mu}\left\Vert \mu\right.  \right)  \right]  =E\left[
R\left(  \bar{\beta}^{a}\left\Vert \beta^{a}\right.  \right)  +\sum_{\ell
=0}^{\infty}\sum_{k=0}^{\ell-1}R\left(  \bar{\sigma}_{k}^{a}(\ell)\left\Vert
\sigma^{a}\right.  \right)  \bar{\beta}^{a}\left(  \ell\right)  \right]  .
\]
Since
\begin{align*}
E\left[  \sum_{k=0}^{\bar{N}^{a}-1}R\left(  \bar{\sigma}_{k}^{a}(\bar{N}%
^{a})\left\Vert \sigma^{a}\right.  \right)  \right]   &  =E\left[  E\left[
\left.  \sum_{k=0}^{\bar{N}^{a}-1}R\left(  \bar{\sigma}_{k}^{a}(\bar{N}%
^{a})\left\Vert \sigma^{a}\right.  \right)  \right\vert \bar{N}^{a}\right]
\right] \\
&  =E\left[  \sum_{\ell=0}^{\infty}\sum_{k=0}^{\ell-1}R\left(  \bar{\sigma
}_{k}^{a}(\ell)\left\Vert \sigma^{a}\right.  \right)  \bar{\beta}^{a}\left(
\ell\right)  \right]  ,
\end{align*}
it follows that
\[
E\left[  R\left(  \bar{\mu}\left\Vert \mu\right.  \right)  \right]  =E\left[
R\left(  \bar{\beta}^{a}\left\Vert \beta^{a}\right.  \right)  +\sum
_{k=0}^{\bar{N}^{a}-1}R\left(  \bar{\sigma}_{k}^{a}(\bar{N}^{a})\left\Vert
\sigma^{a}\right.  \right)  \right]  .
\]
Observe that $\bar{\tau}$ can be written as a measurable mapping on $%
\mathbb{N}
_{+}\times\prod_{i=0}^{\infty}[0,\infty)$ and that $\bar{\sigma}$ and
$\sigma^{1}$ are the distributions induced on $[0,\infty)$ under that map by
$\bar{\mu}$ and $\mu$, respectively. Since relative entropy can only decrease
under such a mapping, it follows that $ER\left(  \bar{\mu}\left\Vert
\mu\right.  \right)  \geq ER\left(  \bar{\sigma}\left\Vert \sigma^{1}\right.
\right)  $.
\end{proof}

\subsection{Lower bound}

The proof of the lower bound will be partitioned into three cases according to
a parameter $C\in(1,\infty)$. After the three cases have been argued, the
proof of the lower bound will be completed. The first two cases are very
simple, and give estimates when $R^{a}/T$ is small (i.e., unusually few
exponential clocks with mean 1 are needed before time $T$ is reached) or when
$R^{a}/T$ is large (i.e., an unusually large number of of such clocks are
needed to reach tome $T$). The processes $\left\{  \boldsymbol{U}%
^{a}(j)\right\}  $ and $\left\{  M_{\ell}^{a}(j)\right\}  $ play no role in
these estimates, and the required estimates follow from Chebyshev's inequality
as it is used in the proof of Cr\'{a}mer's Theorem. We will need the function
$h\left(  b\right)  =-\log b+b-1$, $b\in\lbrack0,\infty)$, which satisfies
$\inf\left\{  R\left(  \gamma\left\Vert \sigma^{1}\right.  \right)  :\int
u\gamma\left(  du\right)  =b\right\}  =h\left(  b\right)  $.

\subsubsection{ The case $R^{a}/T\leq1/C.$}

Let $F:\mathcal{P}(S)\rightarrow\mathbb{R}$ be non-negative and continuous.
Then
\begin{align*}
-\frac{1}{T}  &  \log E\left[  \exp\{-TF(\eta_{T}^{a})-\infty1_{\left\{
[0,1/C]\right\}  ^{c}}(R^{a}/T)\}\right] \\
&  =-\frac{1}{T}\log E\left[  \exp\left\{  -TF(\eta_{T}^{a})\right\}
1_{\left\{  [0,1/C]\right\}  }(R^{a}/T)\right] \\
&  \geq-\frac{1}{T}\log P\left\{  \sum_{i=0}^{\left\lfloor T/C\right\rfloor
+1}\tau_{i}^{1}\geq T\right\}  .
\end{align*}
By Chebyshev's inequality, for $\alpha\in\left(  0,1\right)  $%
\begin{align*}
P\left\{  \sum_{i=0}^{\left\lfloor T/C\right\rfloor +1}\tau_{i}^{1}\geq
T\right\}   &  =P\left\{  e^{\alpha\sum_{i=0}^{\left\lfloor T/C\right\rfloor
+1}\tau_{i}^{1}}\geq e^{\alpha T}\right\} \\
&  \leq\exp\left\{  \left(  \left\lfloor T/C\right\rfloor +2\right)  \left(
\log\frac{1}{1-\alpha}-\alpha C\right)  \right\}  .
\end{align*}
Optimizing this inequality over $\alpha\in\left(  0,1\right)  $ gives
\begin{align*}
\liminf_{T\rightarrow\infty}  &  -\frac{1}{T}\log E\left[  \exp\{-TF(\eta
_{T}^{a})-\infty1_{\left\{  [0,1/C]\right\}  ^{c}}(R^{a}/T)\}\right] \\
&  \geq\liminf_{T\rightarrow\infty}\frac{1}{T}\left(  \left\lfloor
T/C\right\rfloor +2\right)  h\left(  C\right) \\
&  =\frac{h\left(  C\right)  }{C}.
\end{align*}
Note that $h\left(  C\right)  /C\rightarrow1$ as $C\rightarrow\infty$.

\subsubsection{ The case $R^{a}/T\geq C.$}

With $F$ as in the last section, an analogous argument gives
\begin{align*}
\liminf_{T\rightarrow\infty}  &  -\frac{1}{T}\log E\left[  \exp\{-TF(\eta
_{T}^{a})-\infty1_{\left\{  [C,\infty)\right\}  ^{c}}(R^{a}/T)\}\right] \\
&  \geq\liminf_{T\rightarrow\infty}\frac{1}{T}\left(  \left\lfloor T\left(
C-1\right)  \right\rfloor +2\right)  h\left(  \frac{1}{C-1}\right) \\
&  =\left(  C-1\right)  h\left(  \frac{1}{C-1}\right)  .
\end{align*}
Note that $\left(  C-1\right)  h\left(  1/\left(  C-1\right)  \right)
\rightarrow\infty$ as $C\rightarrow\infty$.\qquad.

\subsubsection{ The case $1/C\leq R^{a}/T\leq C.$}

To analyze this case it will be sufficient to consider any deterministic
sequence $\left\{  r^{a}\right\}  $ such that $1/C\leq r^{a}/T\leq C$, and
such that $r^{a}/T\rightarrow A$ as $T\rightarrow\infty$, and obtain lower
bounds on
\[
\liminf_{T\rightarrow\infty}-\frac{1}{T}\log E\left[  \exp\{-TF(\eta_{T}%
^{a})-\infty1_{\left\{  r^{a}/T\right\}  ^{c}}(R^{a}/T)\}\right]  .
\]
Since $I^{\infty}$ is convex, to prove the lower bound for (\ref{eqn:Laplace})
we can assume $F$ is convex and lower semicontinuous \cite[Theorem
1.2.1]{dupell4}. The representation from Lemma \ref{lem:rep} will be applied.
We first note that the representation will include a term of the form
$\infty1_{\left\{  r^{a}/T\right\}  ^{c}}(\bar{R}^{a}/T)$ on the right hand
side. We can remove this term if we restrict the infimum to controls for which
$\bar{R}^{a}=r^{a}$ w.p.1, and do so for notational convenience. The
representation thus becomes
\begin{align}
-  &  \frac{1}{T}\log E\left[  \exp\{-TF(\eta_{T}^{a})-\infty1_{\left\{
r^{a}\right\}  ^{c}}(R^{a})\}\right] \label{eqn:fixed_r_RE}\\
=  &  \inf E\rule{0pt}{24pt}\left[  \rule{0pt}{24pt}\rule{0pt}{24pt}%
\rule{0pt}{24pt}F(\bar{\eta}_{T}^{a})+\frac{1}{T}\sum_{i=0}^{r^{a}-1}\left[
R\left(  \bar{\alpha}_{i}(\boldsymbol{\bar{U}}^{a}(i),\cdot|\bar{M}_{0}%
^{a}(i))\left\Vert \alpha(\boldsymbol{\bar{U}}^{a}(i),\cdot|\bar{M}_{0}%
^{a}(i))\right.  \right)  \right.  \right. \nonumber\\
\quad &  \left.  +R\left(  \bar{\beta}_{i}^{a}\left\Vert \beta^{a}\right.
\right)  \right]  +\frac{1}{T}\sum_{i=0}^{r^{a}-1}\sum_{k=0}^{\bar{N}_{i}%
^{a}-1}\left[  R\left(  \bar{p}_{i,k}(\bar{M}_{k}^{a}(i),\boldsymbol{\cdot
}|\boldsymbol{\bar{U}}^{a}(i))\left\Vert p(\bar{M}_{k}^{a}%
(i),\boldsymbol{\cdot}|\boldsymbol{\bar{U}}^{a}(i))\right.  \right)  \right.
\nonumber\\
&  \left.  +R\left(  \bar{\sigma}_{i,k}^{a}\left\Vert \sigma^{a}\right.
\right)  \rule{0pt}{24pt}\rule{0pt}{24pt}\right]  \rule{0pt}{24pt}%
\rule{0pt}{24pt}\nonumber
\end{align}
There are four relative entropy sums in (\ref{eqn:fixed_r_RE}). Since $F$ is
bounded from below, the lower bound holds vacuously unless each such term is
uniformly bounded.

We first show that the empirical distribution on $\bar{N}_{i}^{a}$ converges
to $\delta_{\infty}$ by using a martingale argument. For $C_{1},C_{2}%
\subset\lbrack0,\infty)$, let
\[
\xi^{T}(C_{1}\times C_{2})\doteq\frac{1}{r^{a}}\sum_{i=0}^{r^{a}-1}%
\delta_{\bar{N}_{i}^{a}}(C_{1})\bar{\beta}_{i}^{a}(C_{2}).
\]
Consider the one-point compactification of $[0,\infty)$, which we identify
with $[0,\infty]$, and left $f:[0,\infty]\rightarrow\mathbb{R}$ be bounded and
continuous. Then for any $\varepsilon>0$
\begin{align}
P  &  \left\{  \left\vert \int[f(z_{1})-f(z_{2})]\xi^{T}(dz_{1}\times
dz_{2})\right\vert \geq\varepsilon\right\} \label{eqn:mg}\\
&  =P\left\{  \left\vert \frac{1}{r^{a}}\sum_{i=0}^{r^{a}-1}\left(  f(\bar
{N}_{i}^{a})-\int f(n)\bar{\beta}_{i}^{a}(dn)\right)  \right\vert
\geq\varepsilon\right\} \nonumber\\
&  \leq\frac{1}{\varepsilon^{2}}E\left[  \left\vert \frac{1}{r^{a}}\sum
_{i=0}^{r^{a}-1}\left(  f(\bar{N}_{i}^{a})-\int f(n)\bar{\beta}_{i}%
^{a}(dn)\right)  \right\vert ^{2}\right] \nonumber\\
&  \leq\frac{\left\Vert f\right\Vert _{\infty}^{2}}{\varepsilon^{2}r^{a}%
}\nonumber\\
&  \rightarrow0,\nonumber
\end{align}
and thus any weak limit of $\xi^{T}$ has identical marginals. Next note that
by Jensen's inequality and since $r^{a}/T\rightarrow A\in(0,\infty)$, the
uniform bound on the second relative entropy sum implies that $ER\left(
[\xi^{T}]_{2}\left\Vert \beta^{a}\right.  \right)  $ is uniformly bounded.
Using that inf$\{R\left(  \gamma\left\Vert \beta_{a}\right.  \right)  :\int
u\gamma\left(  du\right)  =b\}=b\log\frac{b}{a}+\left(  1+b\right)  \log
\frac{1+a}{1+b}$, is follows that the weak limit of $[\xi^{T}]_{2}%
=\delta_{\infty}$ w.p.1.

Next we consider the asymptotic properties of the collection $\left\{  \bar
{M}_{\ell}^{a}(i)\right\}  $, under boundedness of the third relative entropy
sum. We use that the empirical measure of the $\left\{  \bar{N}_{i}%
^{a}\right\}  $ tends to $\delta_{\infty}$. Since $p(\cdot,\cdot
|\boldsymbol{\bar{U}}^{a}(i))$ is the transition function of a finite state
Markov chain this means that asymptotically the $\bar{M}_{k}^{a}%
(i),k=0,\ldots,\bar{N}_{i}^{a}$ are samples from the transition probability
(\ref{eqn:trans_probs}) with $\boldsymbol{x=\bar{U}}^{a}(i)$, and in
particular that $\bar{M}_{0}^{a}(i+1)$ is asymptotically conditionally
independent with distribution $\left(  \rho(\boldsymbol{x}),1-\rho
(\boldsymbol{x})\right)  $. Indeed, a martingale argument similar to
(\ref{eqn:mg}) shows that if%
\[
\mu^{T}(C_{1}\times C_{2})\doteq\frac{1}{r^{a}}\sum_{i=0}^{r^{a}-1}%
\delta_{\boldsymbol{\bar{U}}^{a}(i)}(C_{1})\delta_{\bar{M}_{0}^{a}(i)}%
(C_{2}),
\]
and if $\mu^{\infty}$ is the weak limit of any convergent subsequence (which
must exist by compactness), then
\begin{equation}
\left(  \lbrack\mu^{\infty}]_{2|1}(\left\{  0\right\}  |\boldsymbol{y}%
),[\mu^{\infty}]_{2|1}(\left\{  1\right\}  |\boldsymbol{y})\right)  =\left(
\rho(\boldsymbol{y}),1-\rho(\boldsymbol{y})\right)  \text{ }[\mu^{\infty}%
]_{1}\text{-a.s.,} \label{eqn:rho_marg}%
\end{equation}
and the same is true for the empirical measure of $\left\{  \bar{M}_{k}%
^{a}(i),k=0,\ldots,\bar{N}_{i}^{a}\right\}  $.

We next remove the third relative entropy sum from the representation
(\ref{eqn:fixed_r_RE}), and obtain a lower bound for the right hand side using
Lemma \ref{lem:RE_decomp}. Let $\hat{\sigma}_{i}^{a}$ denote the distribution
of the random variable
\[
\hat{\tau}_{i}^{a}=\sum_{\ell=0}^{\bar{N}_{i}^{a}-1}\bar{\tau}_{i,\ell}^{a}%
\]
Then we have the lower bound%
\begin{align}
-  &  \frac{1}{T}\log E\left[  \exp\{-TF(\eta_{T}^{a})-\infty1_{\left\{
r^{a}\right\}  ^{c}}(R^{a})\}\right] \label{lb}\\
&  \geq\inf\rule{0pt}{24pt}\left[  \rule{0pt}{24pt}\rule{0pt}{24pt}%
F(E\bar{\eta}_{T}^{a})+\frac{1}{T}E\sum_{i=0}^{r^{a}-1}\left[  R\left(
\bar{\alpha}_{i}(\boldsymbol{\bar{U}}^{a}(i),\cdot|\bar{M}_{0}^{a}%
(i))\left\Vert \alpha(\boldsymbol{\bar{U}}^{a}(i),\cdot|\bar{M}_{0}%
^{a}(i))\right.  \right)  \right.  \right. \nonumber\\
&  \left.  \left.  +R\left(  \hat{\sigma}_{i}^{a}\left\Vert \sigma^{1}\right.
\right)  \right]  \rule{0pt}{24pt}\rule{0pt}{24pt}\right]  \rule{0pt}{24pt}%
\rule{0pt}{24pt}.\nonumber
\end{align}
To study the lower bound of (\ref{lb}) we introduce the measures%
\begin{align*}
\kappa^{T}(C_{1}\times C_{2}\times C_{3})  &  \doteq\frac{1}{r^{a}}\sum
_{i=0}^{r^{a}-1}\delta_{\boldsymbol{\bar{U}}^{a}(i)}(C_{1})\bar{\alpha}%
_{i}(\boldsymbol{\bar{U}}^{a}(i),C_{2}|\bar{M}_{0}^{a}(i))\delta_{\bar{M}%
_{0}^{a}(i)}(C_{3})\\
\xi^{T}(C_{1}\times C_{2})  &  \doteq\frac{1}{T}\sum_{i=0}^{r^{a}-1}%
\delta_{\boldsymbol{\bar{U}}^{a}(i)}(C_{1})\delta_{(\hat{m}_{i}^{a})^{-1}%
}(C_{2})\hat{m}_{i}^{a},
\end{align*}
where $\hat{m}_{i}^{a}$ is the conditional mean of $\hat{\tau}_{i}^{a}$. The
restriction on the control measures implies
\[
\sum_{i=0}^{r^{a}-2}\hat{\tau}_{i}^{a}\leq T<\sum_{i=0}^{r^{a}-1}\hat{\tau
}_{i}^{a},
\]
and since function $h$ is increasing on $\left(  1,\infty\right)  $, we can
assume without loss of generality that $\hat{m}_{r^{a}-1}^{a}\leq1$.

We introduce the function $\ell:%
\mathbb{R}
_{+}\rightarrow%
\mathbb{R}
_{+}$ given by $\ell\left(  b\right)  =b\log b-b+1$. Note that $h\left(
b\right)  =b\ell\left(  1/b\right)  $. The sequence $\left\{  \kappa
^{T}\right\}  $ is tight because $S$ and $\{0,1\}$ are compact. Using the fact
that $\hat{\sigma}_{i}^{a}$ selects the conditional distribution of $\hat
{\tau}_{i}^{a}$ and $\inf\{ R\left(  \gamma\left\Vert \sigma^{1}\right.
\right)  :\int u\gamma\left(  du\right)  $ $=b\} =h\left(  b\right)  $, we
have
\begin{align*}
E\left[  \frac{1}{T}\sum_{i=0}^{r^{a}-1}R\left(  \hat{\sigma}_{i}%
^{a}\left\Vert \sigma^{1}\right.  \right)  \right]   &  \geq E\left[  \frac
{1}{T}\sum_{i=0}^{r^{a}-1}h(\hat{m}_{i}^{a})\right] \\
&  =E\left[  \frac{1}{T}\sum_{i=0}^{r^{a}-1}\hat{m}_{i}^{a}\ell([\hat{m}%
_{i}^{a}]^{-1})\right] \\
&  =E\left[  \int\ell(z)\xi^{T}(du\times dz)\right]  .
\end{align*}
Hence the uniform bound on the relative entropy sum gives that $\left\{
\xi^{T}\right\}  $ is tight. Let $f:S\rightarrow\mathbb{R}$ be bounded and
continuous. Since $\bar{\alpha}_{i}(\boldsymbol{\bar{U}}^{a}(i),\cdot|\bar
{M}_{0}^{a}(i))$ selects the conditional distribution of $\boldsymbol{\bar{U}%
}^{a}(i+1)$, for $\varepsilon>0$
\begin{align*}
P  &  \left\{  \left\vert \sum_{z=1}^{2}\int[f(\boldsymbol{y}_{1}%
)-f(\boldsymbol{y}_{2})]\kappa^{T}(d\boldsymbol{y}_{1}\times d\boldsymbol{y}%
_{2}\times dz)\right\vert \geq\varepsilon\right\} \\
&  =P\left\{  \left\vert \frac{1}{r^{a}}\sum_{i=0}^{r^{a}-1}\left(
f(\boldsymbol{\bar{U}}^{a}(i+1))-\int f(\boldsymbol{y})\bar{\alpha}%
_{i}(\boldsymbol{\bar{U}}^{a}(i),d\boldsymbol{y}|\bar{M}_{0}^{a}(i))\right)
\right\vert \geq\varepsilon\right\} \\
&  \leq\frac{1}{\varepsilon^{2}}E\left[  \left\vert \frac{1}{r^{a}}\sum
_{i=0}^{r^{a}-1}\left(  f(\boldsymbol{\bar{U}}^{a}(i+1))-\int f(\boldsymbol{y}%
)\bar{\alpha}_{i}(\boldsymbol{\bar{U}}^{a}(i),d\boldsymbol{y}|\bar{M}_{0}%
^{a}(i))\right)  \right\vert ^{2}\right] \\
&  \leq\frac{\left\Vert f\right\Vert _{\infty}^{2}}{\varepsilon^{2}r^{a}}\\
&  \rightarrow0.
\end{align*}
We find that
\begin{equation}
\lbrack\kappa^{\infty}]_{1}=[\kappa^{\infty}]_{2}\text{ w.p.1}.
\label{eqn:invariance_prop}%
\end{equation}
Using Fatou's lemma and the definition of $\varphi$, we get the lower bound
$AR([\kappa^{\infty}]_{1,2}\Vert$ $\lbrack\kappa^{\infty}]_{1}\otimes\varphi)$
on the weak limit of the corresponding relative entropies (the second sum in
(\ref{lb})).

With regard to $\xi^{\infty}$, comparing the form of $\left[  \xi^{T}\right]
_{1}$ and $\bar{\psi}_{T}^{a}$ gives
\[
E\left[  \xi^{\infty}\right]  _{1}=E\bar{\psi}_{\infty}.
\]
Observe that
\[
\int z\xi^{T}(d\boldsymbol{x}\times dz)=\frac{r^{a}}{T}\left[  \kappa
^{T}\right]  _{1}(d\boldsymbol{x}).
\]
Because of the superlinearity of $\ell$ we have uniform integrability, and
thus passing to the limit gives
\[
\int z[\xi^{\infty}]_{1}(d\boldsymbol{x})[\xi^{\infty}]_{2|1}%
(dz|\boldsymbol{x})=A[\kappa^{\infty}]_{1}(d\boldsymbol{x}).
\]
Hence with the definition $b(\boldsymbol{x})=\int z[\xi^{\infty}%
]_{2|1}(dz|\boldsymbol{x})$, $[d[\kappa^{\infty}]_{1}/d[\xi^{\infty}%
]_{1}](\boldsymbol{x})=b(\boldsymbol{x})/A$. Using Fatou's lemma, Jensen's
inequality and the weak convergence, we get the following lower bound on the
corresponding relative entropies (the first sum in (\ref{lb})):%
\begin{align*}
\liminf_{T\rightarrow\infty}  &  \int\ell(z)\xi^{T}(du\times dz)\\
&  \geq\int\ell(z)\xi^{\infty}(du\times dz)\\
&  \geq\int\ell\left(  \int z[\xi^{\infty}]_{2|1}(dz|\boldsymbol{x})\right)
[\xi^{\infty}]_{1}(d\boldsymbol{x})\\
&  =\int\ell\left(  b(\boldsymbol{x})\right)  [\xi^{\infty}]_{1}%
(d\boldsymbol{x})\\
&  =A\int h(1/b(\boldsymbol{x}))[\kappa^{\infty}]_{1}(d\boldsymbol{x}).
\end{align*}
Let $r(\boldsymbol{y})=A/b\left(  \boldsymbol{y}\right)  $. Then we can write
the combined lower bound on the relative entropies as
\begin{align}
&  AER([\kappa^{\infty}]_{1,2}\Vert\lbrack\kappa^{\infty}]_{1}\otimes
\varphi)+AE\int h(1/b(\boldsymbol{x}))[\kappa^{\infty}]_{1}(d\boldsymbol{x}%
)\nonumber\\
&  =AER([\kappa^{\infty}]_{1,2}\Vert\lbrack\kappa^{\infty}]_{1}\otimes
\varphi)-AE\int\log r(\boldsymbol{y})[\kappa^{\infty}]_{1}(d\boldsymbol{y}%
)+A\log A-A+1. \label{eqn:final_lb}%
\end{align}

We next consider $\bar{\psi}_{T}^{a}$ and $\bar{\eta}_{T}^{a}$. Using the
limiting properties of the $\bar{M}_{k}^{a}(i)$, we have
\begin{align*}
\bar{\eta}_{T}^{a}(C)  &  =\frac{1}{T}\int_{0}^{T}\left[  1_{\left\{  \bar
{Z}^{a}(t)=0\right\}  }\delta_{\boldsymbol{\bar{Y}}^{a}(t)}(C)+1_{\left\{
\bar{Z}^{a}(t)=1\right\}  }\delta_{\boldsymbol{\bar{Y}}^{a}(t)^{R}}(C)\right]
dt\\
&  \rightarrow\int\left[  \rho(\boldsymbol{y})\delta_{\boldsymbol{y}%
}(C)+(1-\rho(\boldsymbol{y}))\delta_{\boldsymbol{y}^{R}}(C)\right]  \left[
\xi^{\infty}\right]  _{1}\\
&  =\bar{\eta}_{\infty}(C)
\end{align*}
and
\[
\bar{\psi}_{T}^{a}(C)=\frac{1}{T}\int_{0}^{T}\delta_{\boldsymbol{\bar{Y}}%
^{a}(t)}(C)dt\rightarrow\int_{C}\left[  \xi^{\infty}\right]  _{1}=\bar{\psi
}_{\infty}(C).
\]
Note that this implies the relation%
\begin{equation}
\bar{\eta}_{\infty}(C)=\int_{C}\left[  \rho(\boldsymbol{y})\bar{\psi}_{\infty
}(d\boldsymbol{y})+\rho(\boldsymbol{y}^{R})\bar{\psi}_{\infty}(d\boldsymbol{y}%
^{R})\right]  . \label{eqn:eta_psi_relation}%
\end{equation}

Finally we consider the weighted empirical measure. By (\ref{eqn:final_lb}),
Lemma \ref{lem:rate_relation} and Jensen's inequality we have the lower bound
$\left[  F(E\bar{\eta}_{\infty})+K(E\bar{\psi}_{\infty})\right]  $ for the
limit inferior of the right hand side of (\ref{eqn:fixed_r_RE}), where
$\bar{\eta}_{\infty}$ and $\bar{\psi}_{\infty}$ are related by
(\ref{eqn:eta_psi_relation}). Thus we need only show that
\[
K(E\bar{\psi}_{\infty})\geq I^{0}(E\bar{\eta}_{\infty})=I^{\infty}(E\bar{\eta
}_{\infty}).
\]
The equality follows from the definition of $I^{\infty}$ and
(\ref{eqn:eta_psi_relation}). Let $a(\boldsymbol{y})=[dE\bar{\psi}_{\infty
}/d\bar{\mu}](\boldsymbol{y})$. Then%
\[
K(E\bar{\psi}_{\infty})=1-\int\sqrt{a(\boldsymbol{x})a(\boldsymbol{y})}%
\bar{\mu}(d\boldsymbol{x})\varphi(\boldsymbol{x},d\boldsymbol{y})
\]
and
\[
I^{0}(E\bar{\eta}_{\infty})=1-\int\frac{1}{2}\sqrt{\left(  a(\boldsymbol{x}%
)+a(\boldsymbol{x}^{R})\right)  \left(  a(\boldsymbol{y})+a(\boldsymbol{y}%
^{R})\right)  }\mu(d\boldsymbol{x})\alpha(\boldsymbol{x},d\boldsymbol{y}).
\]
Using that $\rho(\boldsymbol{x})=1-\rho(\boldsymbol{x}^{R})$ and symmetry
\begin{align*}
&  \int\sqrt{a(\boldsymbol{x})a(\boldsymbol{y})}\frac{1}{2}\left[
\mu(d\boldsymbol{x})+\mu(d\boldsymbol{x}^{R})\right]  \left(  \rho
(\boldsymbol{x})\alpha(\boldsymbol{x},d\boldsymbol{y})+\rho(\boldsymbol{x}%
^{R})\alpha(\boldsymbol{x}^{R},d\boldsymbol{y}^{R})\right) \\
&  =\frac{1}{2}\int\left(  \sqrt{a(\boldsymbol{x})a(\boldsymbol{y})}%
+\sqrt{a(\boldsymbol{x}^{R})a(\boldsymbol{y}^{R})}\right)  \mu(d\boldsymbol{x}%
)\left(  \rho(\boldsymbol{x})\alpha(\boldsymbol{x},d\boldsymbol{y}%
)+\rho(\boldsymbol{x}^{R})\alpha(\boldsymbol{x}^{R},d\boldsymbol{y}%
^{R})\right) \\
&  =\frac{1}{2}\int\left(  \sqrt{a(\boldsymbol{x})a(\boldsymbol{y})}%
+\sqrt{a(\boldsymbol{x}^{R})a(\boldsymbol{y}^{R})}\right)  \mu(d\boldsymbol{x}%
)\alpha(\boldsymbol{x},d\boldsymbol{y}).
\end{align*}
We now use
\[
\sqrt{a(\boldsymbol{x})a(\boldsymbol{y})}+\sqrt{a(\boldsymbol{x}%
^{R})a(\boldsymbol{y}^{R})}\leq\sqrt{\left(  a(\boldsymbol{x}%
)+a(\boldsymbol{x}^{R})\right)  \left(  a(\boldsymbol{y})+a(\boldsymbol{y}%
^{R})\right)  }
\]
to obtain $K(E\bar{\psi}_{\infty})\geq I^{0}(E\bar{\eta}_{\infty})$. [We note
for later use that given $\bar{\eta}$ that is absolutely continuous with
respect to Lebesgue measure and such that $I^{\infty}(\bar{\eta})<\infty$,
$K(\bar{\psi})=I^{0}(\bar{\eta})$ can be shown for a $\bar{\psi}$ that maps to
$\bar{\eta}$ by taking $\bar{\psi}=[\bar{\eta}+\bar{\eta}^{R}]/2$.]

\subsubsection{Combining the cases}

In the last subsection we showed that for any sequence $r^{a}$ such that
$r^{a}/T\rightarrow A\in\lbrack1/C,C]$,
\begin{align*}
\liminf_{T\rightarrow\infty}-\frac{1}{T}\log E\left[  \exp\{-TF(\eta_{T}%
^{a})-\infty1_{\left\{  r^{a}/T\right\}  ^{c}}(R^{a}/T)\}\right]   &
\geq\left[  F(E\bar{\eta}_{\infty})+I^{\infty}(E\bar{\eta}_{\infty})\right] \\
&  \geq\inf_{\nu\in\mathcal{P}(S)}\left[  F(\nu)+I^{\infty}(\nu)\right]  ,
\end{align*}
and an argument by contradiction shows that the bound is uniform in $A$. Thus
\begin{align*}
\liminf_{T\rightarrow\infty}  &  -\frac{1}{T}\log\left\{  \sum_{r^{a}%
=\left\lceil \frac{T}{C}\right\rceil }^{\left\lfloor TC\right\rfloor }E\left[
\exp\{-TF(\eta_{T}^{a})-\infty1_{\left\{  r^{a}/T\right\}  ^{c}}%
(R^{a}/T)\}\right]  \right\} \\
&  \geq\liminf_{T\rightarrow\infty}-\frac{1}{T}\log\left\{  TC\cdot
\bigvee_{r^{a}=\left\lceil \frac{T}{C}\right\rceil }^{\left\lfloor
TC\right\rfloor }E\left[  \exp\{-TF(\eta_{T}^{a})-\infty1_{\left\{
r^{a}/T\right\}  ^{c}}(R^{a}/T)\}\right]  \right\} \\
&  \geq\inf_{\nu\in\mathcal{P}(S)}\left[  F(\nu)+I^{\infty}(\nu)\right]  .
\end{align*}
We now partition $E\left[  \exp\{-TF(\eta_{T}^{a})\right]  $ according to the
various cases to obtain the overall lower bound
\begin{align*}
\liminf_{T\rightarrow\infty}  &  -\frac{1}{T}\log E\left[  \exp\{-TF(\eta
_{T}^{a_{T}})\}\right] \\
&  \geq\min\left\{  \inf_{\nu\in\mathcal{P}(S)}\left[  F(\nu)+I^{\infty}%
(\nu)\right]  ,\frac{h\left(  C\right)  }{C},\left(  C-1\right)  h\left(
\frac{1}{C-1}\right)  \right\}  .
\end{align*}
Letting $C\rightarrow\infty$ and using the fact that $I^{\infty}\leq1$, we
have the desired lower bound%
\[
\liminf_{T\rightarrow\infty}-\frac{1}{T}\log E\left[  \exp\{-TF(\eta
_{T}^{a_{T}})\}\right]  \geq\inf_{\nu\in\mathcal{P}(S)}\left[  F(\nu
)+I^{\infty}(\nu)\right]  .
\]

\subsection{Upper bound}

The proof of the reverse inequality
\begin{equation}
\limsup_{T\rightarrow\infty}-\frac{1}{T}\log E\left[  \exp\{-TF(\eta
_{T}^{a_{T}})\}\right]  \leq\inf_{\nu\in\mathcal{P}(S)}\left[  F(\nu
)+I^{\infty}(\nu)\right]  \label{eqn:Laplace_UB}%
\end{equation}
is simpler. Let bounded and continuous $F$ be given. Given $\varepsilon>0$, we
can find $\nu$ that is absolutely continuous with respect to Lebesgue measure
and for which
\[
\left[  F(\nu)+I^{\infty}(\nu)\right]  \leq\inf_{\nu\in\mathcal{P}(S)}\left[
F(\nu)+I^{\infty}(\nu)\right]  +\varepsilon.
\]
Next we use that $I^{\infty}$ is convex and $I^{\infty}(\mu)=0$ to find
$\tau>0$ such that
\[
\left[  F(\nu^{\tau})+I^{\infty}(\nu^{\tau})\right]  \leq\left[
F(\nu)+I^{\infty}(\nu)\right]  +\varepsilon
\]
when $\nu^{\tau}=\tau\mu+(1-\tau)\nu$. Note that if $\theta(\boldsymbol{x}%
)=[d\nu/d\mu](\boldsymbol{x})$ and $\theta^{\tau}(\boldsymbol{x})=[d\nu^{\tau
}/d\mu](\boldsymbol{x})$, then $\theta^{\tau}(\boldsymbol{x})\geq\tau>0$ and
so $I^{\infty}(\nu^{\tau})<1$.

We will construct a control to use in the representation such $\bar{\eta}%
_{T}^{a_{T}}$ will converge w.p.1 to $\nu^{\tau}$ and $RE^{T}$ will converge
w.p.1 to $I^{\infty}(\nu^{\tau})$. These convergences will follow from the
ergodic theorem, and the bound $\theta^{\tau}(\boldsymbol{x})\geq\tau$ will be
used to argue that the ergodicity of the original process is inherited by the
controlled process.

We now proceed to the construction. Let $\varsigma(d\boldsymbol{x})=[\nu
^{\tau}(d\boldsymbol{x})+\nu^{\tau}(d\boldsymbol{x}^{R})]/2$, so that
$I^{\infty}(\nu^{\tau})=K(\varsigma)$. Let $\kappa(\boldsymbol{x}%
)=[d\varsigma/d\bar{\mu}](\boldsymbol{x})\geq\tau/2.$ Then
\[
K(\varsigma)=1-\int\sqrt{\kappa(\boldsymbol{x})\kappa(\boldsymbol{y})}\bar
{\mu}(d\boldsymbol{x})\varphi(\boldsymbol{x},d\boldsymbol{y}).
\]
Since $\kappa$ is uniformly bounded from below we have the dual relationship%
\begin{align*}
&  -\log\int\sqrt{\kappa(\boldsymbol{x})\kappa(\boldsymbol{y})}\bar{\mu
}(d\boldsymbol{x})\varphi(\boldsymbol{x},d\boldsymbol{y})\\
&  =-\log\int e^{\frac{1}{2}\left[  \log\kappa(\boldsymbol{x})+\log
\kappa(\boldsymbol{y})\right]  }\bar{\mu}(d\boldsymbol{x})\varphi
(\boldsymbol{x},d\boldsymbol{y})\\
&  =R(\eta\Vert\bar{\mu}\otimes\varphi)-\frac{1}{2}\int\left[  \log
\kappa(\boldsymbol{x})+\log\kappa(\boldsymbol{y})\right]  \nu(d\boldsymbol{x}%
,d\boldsymbol{y}),
\end{align*}
where
\[
\eta(C)=\frac{\int_{C}e^{\frac{1}{2}\left[  \log\kappa(\boldsymbol{x}%
)+\log\kappa(\boldsymbol{y})\right]  }\bar{\mu}(d\boldsymbol{x})\varphi
(\boldsymbol{x},d\boldsymbol{y})}{\int_{S^{2}}e^{\frac{1}{2}\left[  \log
\kappa(\boldsymbol{x})+\log\kappa(\boldsymbol{y})\right]  }\bar{\mu
}(d\boldsymbol{x})\varphi(\boldsymbol{x},d\boldsymbol{y})}.
\]
Since $\sqrt{\kappa(\boldsymbol{x})\kappa(\boldsymbol{y})}$ is symmetric in
$\boldsymbol{x}$ and $\boldsymbol{y}$ automatically $[\eta]_{1}=[\eta]_{2}$,
and using the bound $\kappa(\boldsymbol{x})\geq\tau/2$ we can factor
$\eta(d\boldsymbol{x}\times d\boldsymbol{y})=[\eta]_{1}(d\boldsymbol{x}%
)\bar{\varphi}(\boldsymbol{x},d\boldsymbol{y})$, where $\bar{\varphi}$ is the
transition kernel of a uniformly ergodic Markov chain. Let%
\[
\bar{\alpha}(\boldsymbol{x},d\boldsymbol{y}|0)=\bar{\alpha}(\boldsymbol{x}%
,d\boldsymbol{y}|1)=\bar{\varphi}(\boldsymbol{x},d\boldsymbol{y}).
\]
Notice that from the definition of $\bar{\varphi}$, $\bar{\varphi
}(\boldsymbol{x},d\boldsymbol{y})=\bar{\varphi}(\boldsymbol{x}^{R}%
,d\boldsymbol{y}^{R})$, hence $\bar{\alpha}$ has the property that
\[
\bar{\alpha}(\boldsymbol{x}^{R},d\boldsymbol{y}^{R}|0)=\bar{\alpha
}(\boldsymbol{x},d\boldsymbol{y}|1)
\]
These will be the transition kernels used to construct the $\boldsymbol{\bar
{U}}^{a}(i)$.

Now of course the invariant distribution of $\bar{\varphi}$ is $[\eta]_{1}$,
and not the desired distribution $\varsigma$. Let $r(\boldsymbol{x}%
)=[d\varsigma/d[\eta]_{1}](\boldsymbol{x})$. Then $r(\boldsymbol{x})$
identifies the way in which the distribution of the random variables $\bar
{N}_{j}^{a}$ should be modified so that in the continuous time the empirical
measure $[\eta]_{1}$ is reshaped into $\varsigma$. Choose $A$ so that equality
holds in (\ref{eqn:two_rates}), and set $b(\boldsymbol{x})=Ar(\boldsymbol{x}%
)$. Then $\bar{\beta}_{i}^{a}$ is chosen to be $\beta^{ab(\boldsymbol{x})}$.
Consistent with the analysis of the upper bound, we do not perturb the
distribution of the other variables, so that $\bar{p}_{i,k}(\cdot
,\boldsymbol{\cdot}|\boldsymbol{\cdot})=p(\cdot,\boldsymbol{\cdot
}|\boldsymbol{\cdot})$ and $\bar{\sigma}_{j,\ell}^{a}=\sigma^{a}$. We then
construct the controlled processes using these measures in exact analogy with
the construction of the original process.

With this choice and Lemma \ref{lem:rep} we obtain the bound
\begin{align*}
&  -\frac{1}{T}\log E\left[  \exp\{-TF(\eta_{T}^{a_{T}})\}\right] \\
&  \leq E\rule{0pt}{24pt}\left[  \rule{0pt}{24pt}F(\bar{\eta}_{T}^{a}%
)+\frac{1}{T}\sum_{i=0}^{\bar{R}^{a}-1}\left[  R\left(  \bar{\alpha
}(\boldsymbol{\bar{U}}^{a}(i),\cdot|\bar{M}_{0}^{a}(i))\left\Vert
\alpha(\boldsymbol{\bar{U}}^{a}(i),\cdot|\bar{M}_{0}^{a}(i))\right.  \right)
\right.  \right. \\
&  \left.  \left.  +R\left(  \beta^{ab(\boldsymbol{\bar{U}}^{a}(i))}\left\Vert
\beta^{a}\right.  \right)  \right]  \rule{0pt}{24pt}\right]  \rule{0pt}{24pt}.
\end{align*}
Now apply the ergodic theorem, and use that w.p.1 $A^{T}=\bar{R}%
^{a}/T\rightarrow1/\int b(\boldsymbol{x})[\eta]_{1}(d\boldsymbol{x})$ $=A$,
and also that asymptotically the conditional distribution of $\bar{M}_{0}%
^{a}(i)$ is given by $(\rho_{0}(\boldsymbol{\bar{U}}^{a}(i)),\rho
_{1}(\boldsymbol{\bar{U}}^{a}(i)))$. Then right hand side of the last display
converges to we have the limit%
\begin{align*}
&  F(\nu^{\tau})+\frac{1}{\int b(\boldsymbol{x})[\eta]_{1}(d\boldsymbol{x}%
)}\int\left[  \sum_{z=1}^{2}R\left(  \bar{\alpha}(\boldsymbol{x}%
,d\boldsymbol{y}|z)\left\Vert \alpha(\boldsymbol{x},d\boldsymbol{y}|z)\right.
\right)  \rho_{z}(\boldsymbol{x})\right]  r(\boldsymbol{x})[\eta
]_{1}(d\boldsymbol{x})\\
&  \quad+\frac{1}{\int b(\boldsymbol{x})[\eta]_{1}(d\boldsymbol{x})}%
\int\left(  -\log b(\boldsymbol{x})+b(\boldsymbol{x})-1\right)
b(\boldsymbol{x})[\eta]_{1}(d\boldsymbol{x})\\
&  =F(\nu^{\tau})+A\int R\left(  \bar{\varphi}(\boldsymbol{x},d\boldsymbol{y}%
)\left\Vert \varphi(\boldsymbol{x},d\boldsymbol{y})\right.  \right)
\varsigma(d\boldsymbol{x})-A\int\log r(\boldsymbol{x})\varsigma
(d\boldsymbol{x})+A\log A-A+1\\
&  =F(\nu^{\tau})+K(\varsigma)\\
&  =F(\nu^{\tau})+I^{\infty}(\nu^{\tau}).
\end{align*}
This completes the proof of (\ref{eqn:Laplace_UB}) and also the proof of
Theorem \ref{thm:LDP_MP}.

\end{document}